\theoremstyle{plain}
\newtheorem{thm}{Theorem}[section]
\newtheorem{lem}[thm]{Lemma}
\newtheorem{prop}[thm]{Proposition}
\newtheorem{cor}[thm]{Corollary}
\theoremstyle{definition}
\newtheorem{defi}[thm]{Definition}
\newtheorem{exam}[thm]{Example}
\newtheorem{rem}[thm]{Remark}
\newcommand{\R}{\mathbb R}
\newcommand{\Z}{\mathbb Z}
\newcommand{\nn}{\vskip 0.2cm}
\newcommand{\n}{\vskip 0.1cm}
\begin{document}

\title [\ ] {Fundamental groups of small covers revisited}

\author{Lisu Wu}
\address{Department of Mathematics, Nanjing University, Nanjing, 210093, P.R.China.
  }
 \email{wulisuwulisu@qq.com}

\author{Li Yu}
\address{Department of Mathematics and IMS, Nanjing University, Nanjing, 210093, P.R.China.
  }
 \email{yuli@nju.edu.cn}

\date{\today}



\thanks{2010 \textit{Mathematics Subject Classification}. 57N16, 57S17, 57S25\\
 This work is partially supported by 
 Natural Science Foundation of China (grant no.11371188) and 
 the PAPD (priority academic program development) of Jiangsu higher education institutions.}

\begin{abstract}
 We study the topology of small covers from their fundamental groups.
 We find a way to obtain explicit presentations of the 
 fundamental group of a small cover. Then we use these presentations to
 study the relations between the fundamental groups of a small cover and its facial submanifolds. In particular, we can
 determine when a facial submanifold of a small cover
  is $\pi_1$-injective
 in terms of some purely combinatorial data on the underlying simple polytope. 
 In addition,
 we find that any $3$-dimensional small cover has
         an embedded non-simply-connected $\pi_1$-injective surface. Using this result and some results of Schoen and Yau, we characterize all the $3$-dimensional small covers that admit
          Riemannian metrics with nonnegative scalar curvature.
  \end{abstract}

\maketitle

 \section{Introduction}
 
  The notion of small cover is first introduced by
  Davis and Januszkiewicz~\cite{DaJan91} as
  an analogue of a smooth projective toric variety in the category of closed manifolds
  with $\Z_2$-torus actions ($\Z_2 =\Z\slash 2\Z$).
 An $n$-dimensional \emph{small cover} is a closed $n$-manifold
  $M$ with a locally standard $(\Z_2)^n$-action whose orbit space can be identified with
  a simple convex polytope $P$ in an Euclidean space. A polytope is called \emph{simple} if every codimension-$k$ face is the intersection of exactly $k$ distinct \emph{facets} (codimension-one faces) of the polytope.
  Recall that two convex polytopes are combinatorially
equivalent if there exists a bijection between their posets of faces with respect to the inclusion.
   All polytopes considered in this paper
  are convex, so we omit the word ``convex'' for brevity. And in most cases we make no distinction
between a convex polytope and its combinatorial equivalence class. \n
  
   The $(\Z_2)^n$-action on the small cover $M$ determines a $(\Z_2)^n$-valued \emph{characteristic
       function} $\lambda$ on the set of \emph{facets} of
      $P$, which encodes all the information of the isotropy groups of
      the non-free orbits. Indeed for any facet $F$ of $P$, 
       the rank-one subgroup $\langle\lambda(F)\rangle \subset (\Z_2)^n$ 
    generated by $\lambda(F)$ 
     is the isotropy group of the codimension-one
      submanifold $\pi^{-1}(F)$ of $M$ where $\pi: M\rightarrow P$
     is the orbit map of the $(\Z_2)^n$-action. The function $\lambda$ is 
     \emph{non-degenerate} in the sense that the values of $\lambda$ 
     on any $n$ facets that are incident to a vertex of $P$ form 
     a basis of $(\Z_2)^n$.
     Conversely, we can recover the manifold $M$
       by gluing $2^n$ copies of $P$ according to the function $\lambda$.
     For any proper face $f$ of $P$, define
      \begin{equation} \label{Equ:G_f}
         G_f = \text{the subgroup of $(\Z_2)^n$ generated by
      the set $\{ \lambda(F) \, |\, f\subset F \}$}.
      \end{equation}
      And define $G_P =\{0\} \subset (\Z_2)^n$.
      Then $M$ is homeomorphic to the quotient space
    \begin{equation} \label{Equ:Quo-SC}
      P\times (\Z_2)^n \slash \sim
    \end{equation}  
  where $(p,g) \sim (p',g')$ if and only if $p=p'$ and $g^{-1}g' \in G_{f(p)}$, and
  $f(p)$ is the unique face of $P$ that contains $p$ in its relative interior. Let $\Theta$ be
  the quotient map in~\eqref{Equ:Quo-SC}. 
   \begin{equation} \label{Equ:Theta}
      \Theta:   P\times (\Z_2)^n  \rightarrow  P\times (\Z_2)^n \slash \sim. 
    \end{equation} 
 \n

     It is shown in~\cite{DaJan91} that many important topological invariants of $M$
     can be easily computed in terms of the combinatorial
      structure of $P$ and the characteristic 
      function $\lambda$. In particular, we can determine
      the fundamental group $\pi_1(M)$ of $M$ as follows.
      Let $W_{P}$ be a right-angled Coxeter group with one generator $s_F$
      and relations $s_F^2=1$ for each facet $F$ of $P$, and 
     $(s_Fs_{F'})^2 =1$ whenever
      $F,F'$ are adjacent facets of $P$.    
      Note that if $F\cap F'=\varnothing$, 
       $s_{F}s_{F'}$ has infinite order in $W_P$ 
       (see~\cite[Proposition 1.1.1]{BjorBre05}).          
      According to~\cite[Lemma 4.4]{DaJan91}, $W_{P}$ is isomorphic to the
      fundamental group of the Borel construction $M_{(\Z_2)^n}=E(\Z_2)^n \times_{(\Z_2)^n} M$
      of $M$.
         It is shown in~\cite[Corollary 4.5]{DaJan91} that the homotopy exact sequence
         of the fibration
         $M \rightarrow M_{(\Z_2)^n} \rightarrow B(\Z_2)^n$ gives a short exact sequence
        \begin{equation} \label{Equ:Fund-Group}
           1 \longrightarrow \pi_1(M) \overset{\psi}{\longrightarrow} 
           W_{P} \overset{\phi}
           {\longrightarrow}
           (\Z_2)^n \longrightarrow 1,
          \end{equation}
         where $\phi(s_F) =\lambda(F)$ for any facet $F$ of $P$, and $\psi$
         is induced by the canonical map 
         $M \hookrightarrow M\times E(\Z_2)^n \rightarrow
         M_{(\Z_2)^n}$. 
         Hence $\pi_1(M)$ is isomorphic to the kernel of $\phi$.
         It follows that a small cover $M$ is never simply connected.
         Moreover, the sequence~\eqref{Equ:Fund-Group} actually splits, so
         $W_P$ is isomorphic to a semidirect product of $\pi_1(M)$ and $(\Z_2)^n$.
         \n
       
          Let $\mathcal{F}(P)$ denote the set of facets of $P$.
           For any proper face $f$ of $P$, we have the following definitions.
         \begin{itemize}

          \item  Define $\mathcal{F}(f^{\perp}) = \{ F\in \mathcal{F}(P) \, |\, 
           \dim(f\cap F) =\dim(f)-1\}$. In other words,
            $\mathcal{F}(f^{\perp})$ consists of those facets of $P$ 
             that intersect $f$ transversely. \n
             
          \item We call $M_f=\pi^{-1}(f)$ the \emph{facial submanifold} of $M$ corresponding to $f$. 
          It is easy to see that $M_f$
          is a small cover over the simple polytope $f$, whose characteristic function, denoted by 
          $\lambda_f$, is 
          determined by $\lambda$ as follows. Let 
          $$\rho_f: (\Z_2)^n \rightarrow
           (\Z_2)^n\slash G_f \cong (\Z_2)^{\dim(f)}$$ 
           be the quotient homomorphism. Then we have 
            \begin{equation} \label{Equ:Quotient-Color}
               \lambda_f(f\cap F) =  \rho_f ( \lambda(F) ), \ \forall\, F\in \mathcal{F}(f^{\perp}).
             \end{equation}  
             
           \item  Similarly to $W_P$, we obtain two
          group homomorphisms for $W_f$:
           $$\psi_f: \pi_1(M_f)\rightarrow W_f,\ \ \phi_f: W_f \rightarrow (\Z_2)^{\dim(f)}.$$
           
            \item Let $i_f: f \rightarrow P$ and $j_f : M_f \rightarrow M$
          be the inclusion maps. Then $i_f$ induces a natural group homomorphism
          $(i_f)_* : W_f \rightarrow W_P$ which sends the generator $s_{f\cap F}$ of $W_f$
          to the generator $s_{F}$ of $W_P$ for any facet $F\in \mathcal{F}(f^{\perp})$. 
           It is easy to check that $(i_f)_*$ is well defined.
          \end{itemize}
        
           Then we have the following diagram. 
              \[ \xymatrix{
       1 \ar[r] & \pi_1(M_f) \ar[r]^{\ \ \psi_f} \ar[d]^{(j_f)_* \ \ \textbf{??}} & W_f 
       \ar[r]^{\phi_f\quad }\ar[d]^{(i_f)_*} &   
         (\Z_2)^{\dim(f)}  \ar[r] & 1 \\
       1 \ar[r] &  \pi_1(M)  \ar[r]^{\ \ \psi} & W_{P} \ar[r]^{\phi} &  
       (\Z_2)^n \ar[u]_{\rho_f} \ar[r] & 1 
           } \]
         
             One may expect that this diagram commutes, i.e.
              $(i_f)_*\circ \psi_f =  \psi\circ (j_f)_*$.
             But this is not true in general.
              Indeed, $(i_f)_*$ may not map
           $\mathrm{ker}(\phi_f)$ into $\mathrm{ker}(\phi)$ (see Example~\ref{Example-1}). 
           It seems to us that there is no canonical way to relate the two maps 
            $(j_f)_*$ and $(i_f)_*$, essentially because there is no canonical way to embed
            $(\Z_2)^{\dim(f)} = (\Z_2)^n\slash G_f$ into $(\Z_2)^n$.
           So to understand $(j_f)_* : \pi_1(M_f)\rightarrow \pi_1(M)$, 
           we need to study the fundamental groups of small covers by some other methods.
         \n
         
          \begin{exam} \label{Example-1}
       In Figure~\ref{p:Example}, we have a facet $F$ in a $3$-dimensional
          simple polytope $P$ along with a $(\Z_2)^3$-coloring,
           where $e_1,e_2,e_3$ is a basis of $(\Z_2)^3$.
       Then the Coxeter group $W_F$ is generated by $\{ s_{F\cap F_i} \}_{1\leq i \leq 4}$ and
        $(i_F)_*:W_F \rightarrow W_{P}$ sends $s_{F\cap F_i}$ to the generator $s_{F_i}$ of
        $W_{P}$. Then $(i_F)_* (s_{F\cap F_1}s_{F\cap F_3}^{-1} ) = 
        s_{F_1} s_{F_3}^{-1}$. But
       \begin{itemize}
        \item $\phi_F(F\cap F_1) = \phi_F(F\cap F_3)$, then
              $s_{F\cap F_1}s_{F\cap F_3}^{-1} \in \mathrm{ker}(\phi_F) \subset W_F$;
        \item $\phi(F_1)=e_1+e_3$, $\phi(F_3)=e_3$, then $s_{F_1} s_{F_3}^{-1} \notin 
         \mathrm{ker}(\phi) \subset W_P$.
        \end{itemize}
        So $(i_F)_*$ does not map $\mathrm{ker}(\phi_F)$ into $ \mathrm{ker}(\phi)$.
          \begin{figure}
         \includegraphics[width=0.18\textwidth]{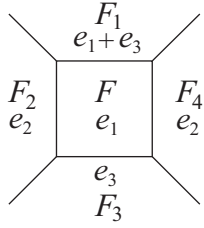}\\
          \caption{A $3$-dimensional simple polytope $P$}\label{p:Example}
      \end{figure} 
          
       \end{exam}

          A motive to study 
          $(j_f)_* : \pi_1(M_f)\rightarrow \pi_1(M)$ is to see 
          under what condition it is injective. A submanifold $\Sigma$ embedded 
          in $M$ is called \emph{$\pi_1$-injective}
           if the inclusion of $\Sigma$ into $M$
          induces a monomorphism in the fundamental group. The $\pi_1$-injective submanifolds play important roles
           in the studying of geometry and topology of low dimensional manifolds. For example the existence
           of $\pi_1$-injective immersed surfaces with nonpositive Euler characteristic in
            a compact oriented $3$-manifold 
           is an obstruction to the existence of Riemannian metric with 
           positive scalar curvature on the manifold (see~\cite{SchYau79}).
            In addition, if an irreducible orientable $3$-manifold
           has a non-simply-connected $\pi_1$-injective embedded surface, 
          then the $3$-manifold is a Haken manifold. It is shown in~\cite{Waldhau68} that
          Haken $3$-manifolds satisfy the \emph{Borel Conjecture}, 
          so they are determined up to homeomorphism by
           their fundamental groups. 
           The main result of this paper is the following theorem
           on the $\pi_1$-injectivity of a facial submanifold 
           in a small cover.\n
          
         \begin{thm}[Theorem~\ref{thm-Fund}.]
                  Let $M$ be a small cover over a simple polytope $P$ and 
      $f$ be a proper face of $P$. Then the following two statements are equivalent.
   \begin{itemize}
    \item[(i)] The facial submanifold $M_f$ is $\pi_1$-injective in $M$.\n
     \item[(ii)] For any $F,F'\in \mathcal{F}(f^{\perp})$, we have
            $f\cap F\cap F' \neq \varnothing$ whenever $F\cap F' \neq \varnothing$.
   \end{itemize}         
   \end{thm}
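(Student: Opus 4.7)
The plan is to first derive an explicit presentation of $\pi_1(M)$ from the natural cell structure $M$ inherits from the quotient $P\times(\Z_2)^n/{\sim}$, do the same for $\pi_1(M_f)$ from $f\times(\Z_2)^{\dim(f)}/{\sim}$, and then describe $(j_f)_*$ on generators directly. In such a presentation, generators correspond to lifts of edge-paths (relative to a chosen spanning tree in the $1$-skeleton) and defining relations come from the $2$-cells, which are in bijection with codimension-$2$ faces $F\cap F'$ of $P$ (respectively $f\cap F\cap F'$ of $f$) together with choices of coset representatives. The combinatorial heart of the theorem is that condition (ii) makes the relation sets of $W_f$ (where $(s_{f\cap F}s_{f\cap F'})^2=1$ iff $f\cap F\cap F'\neq\varnothing$) and of the parabolic subgroup of $W_P$ on $\{s_F:F\in\mathcal{F}(f^{\perp})\}$ (where $(s_Fs_{F'})^2=1$ iff $F\cap F'\neq\varnothing$) agree, which in turn equates the $2$-cells of $M_f$ with the $2$-cells of $M$ indexed by pairs of facets in $\mathcal{F}(f^{\perp})$.

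For (ii) $\Rightarrow$ (i), under the hypothesis every $2$-cell of $M_f$ attached along $f\cap F\cap F'$ is the restriction of a unique $2$-cell of $M$ attached along $F\cap F'$. Hence the chosen generators of $\pi_1(M_f)$ map under $(j_f)_*$ to a set of elements of $\pi_1(M)$ satisfying exactly the defining relations of $\pi_1(M_f)$, and no others. A Reidemeister--Schreier / Tietze-transformation argument then identifies $(j_f)_*(\pi_1(M_f))$ with the subgroup generated by these elements and proves that $(j_f)_*$ is injective.

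For the converse (i) $\Rightarrow$ (ii), I argue by contrapositive. Given $F,F'\in\mathcal{F}(f^{\perp})$ with $F\cap F'\neq\varnothing$ but $f\cap F\cap F'=\varnothing$, let $w=(s_{f\cap F}s_{f\cap F'})^2\in W_f$. Since $f\cap F$ and $f\cap F'$ are disjoint facets of $f$, the product $s_{f\cap F}s_{f\cap F'}$ has infinite order in $W_f$, so $w\neq 1$; moreover $\phi_f(w)=2\lambda_f(f\cap F)+2\lambda_f(f\cap F')=0$ in $(\Z_2)^{\dim(f)}$, so $w\in\ker(\phi_f)$ and represents a non-trivial element of $\pi_1(M_f)$. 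Using the explicit geometric description of $(j_f)_*$ coming from the presentation, the loop in $M_f$ representing $w$ includes into $M$ as a loop that bounds the $2$-disk in $M$ coming from the codimension-$2$ face $F\cap F'$ of $P$, hence represents the trivial element of $\pi_1(M)$, producing the desired non-trivial kernel element. The main obstacle is the first step: because the diagram involving $(i_f)_*$ does not commute (Example~\ref{Example-1}), one cannot simply restrict $(i_f)_*$ to $\pi_1(M_f)$, and one must instead give a hands-on CW-level description of $(j_f)_*$ and verify that the image subgroup has the expected presentation.
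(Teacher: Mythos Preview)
Your direction for (i)$\Rightarrow$(ii) is essentially the paper's: pick $F,F'\in\mathcal{F}(f^{\perp})$ with $F\cap F'\neq\varnothing$ but $f\cap F\cap F'=\varnothing$, note that $(s_{f\cap F}s_{f\cap F'})^2\neq 1$ in $W_f$ while $(s_Fs_{F'})^2=1$ in $W_P$, and pull back through the explicit isomorphisms $\Psi_f,\Psi$ to produce a nontrivial kernel element. One point you gloss over and the paper treats carefully: the loop in $M_f$ mapping via $\Psi_f$ to $(s_{f\cap F}s_{f\cap F'})^2$ is $\beta^f_{j,0}\beta^f_{j',\lambda_f(f\cap F_j)}\beta^f_{j,\lambda_f(f\cap F_j)+\lambda_f(f\cap F_{j'})}\beta^f_{j',\lambda_f(f\cap F_{j'})}$, and under $(j_f)_*$ (which sends $\beta^f_{j,g}\mapsto\beta_{j,g}$) the subscripts are the $\lambda_f$-values, not the $\lambda$-values; these differ by elements of $G_f$. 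The reason the image still equals $\mathbf{x}_{j,j'}=\beta_{j,0}\beta_{j',\lambda(F_j)}\cdots$ is the relation $\beta_{j,g}=\beta_{j,h}$ whenever $g-h\in G_f$, which holds because $F_i\cap F_j\neq\varnothing$ for all facets $F_i\supset f$. Your sentence ``bounds the $2$-disk coming from $F\cap F'$'' hides this computation.

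For (ii)$\Rightarrow$(i) there is a genuine gap. You write that the images of the generators ``satisfy exactly the defining relations of $\pi_1(M_f)$, and no others,'' but this is the entire content of injectivity and you do not justify it. Reidemeister--Schreier gives a presentation of a subgroup of $\pi_1(M)$ from a presentation of $\pi_1(M)$; it does not tell you that the natural map from $\pi_1(M_f)$ onto that subgroup is an isomorphism. The $2$-cells of $M$ indexed by pairs $F,F'$ with at least one of $F,F'$ \emph{outside} $\mathcal{F}(f^{\perp})$ could in principle impose new relations among the images $\beta_{j,g}$, and nothing in your outline rules this out.

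The paper's device is a retraction at the Coxeter-group level. Let $H_f\trianglelefteq W_P$ be the normal closure of $\{s_F:f\subset F\}$ and set $\overline{(i_f)}_*=\eta_f\circ(i_f)_*:W_f\to W_P/H_f$. One checks the diagram
\[
\begin{CD}
\pi_1(M_f,v) @>{\Psi_f}>> W_f\\
@V{(j_f)_*}VV @VV{\overline{(i_f)}_*}V\\
\pi_1(M,v) @>{\eta_f\circ\Psi}>> W_P/H_f
\end{CD}
\]
commutes (this uses exactly the $G_f$-shift computation above). Since $\Psi_f$ is injective, it suffices to show $\overline{(i_f)}_*$ is injective. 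Now define $\zeta_f:W_P/H_f\to W_f$ by $\zeta_f(\bar s_F)=s_{f\cap F}$ for $F\in\mathcal{F}(f^{\perp})$ and $\zeta_f(\bar s_F)=1$ otherwise. Condition (ii) is precisely what makes $\zeta_f$ respect the relations $(\bar s_F\bar s_{F'})^2=1$ for $F\cap F'\neq\varnothing$, so $\zeta_f$ is well defined, and $\zeta_f\circ\overline{(i_f)}_*=\mathrm{id}_{W_f}$ gives the injectivity. This retraction is the missing idea in your (ii)$\Rightarrow$(i); it is how one certifies that ``no other'' relations hold.
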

     
          This result is a bit surprising since it tells us that 
          the $\pi_1$-injectivity of $M_f$ in $M$ depends only on the combinatorics of the
          facets of $P$ adjacent to $f$. However the fundamental group of $M$ 
          depends not only on the combinatorial structure
          of $P$ but also on the characteristic function of $M$ in general.
          \n
           
           The main ingredient of our proof of Theorem~\ref{thm-Fund} is to use 
          explicit presentations of the fundamental groups of small covers. 
          We are also aware that
          methods from metric geometry can be used to prove (ii)$\Rightarrow$(i) in
          Theorem~\ref{thm-Fund} when $M$ is an aspherical manifold
          (see Proposition~\ref{Prop:Flag-Inject}
           and Remark~\ref{Rem:Flag-Metric}).
           By~\cite[Theorem 2.2.5]{DavJanScott98}, a 
           small cover $M$ over a simple polytope $P$ is 
           aspherical if and only if $P$ is \emph{flag} (i.e.
          a collection of facets of $P$ have common intersection 
          whenever they pairwise intersect).
          \n
            
          For any $n$-dimensional simple polytope $P$, let
           $P^*$ be the dual (or polar) polytope of $P$ (see~\cite[\S 2]{Ziegler95}). 
           Then its boundary $\partial P^*$ is a simplicial
          $(n-1)$-sphere. Any codimension-$k$ face $f$ of $P$ corresponds to a
          unique $(k-1)$-simplex in $\partial P^*$ denoted by $\sigma_f$. 
          If $f$ is the intersection of facets $F_1,\cdots, F_k$ of $P$,
          the vertices of $\sigma_f$ are $\sigma_{F_1},\cdots, \sigma_{F_k}$.
          Let $\mathrm{Lk}(\sigma_f,\partial P^*)$ be the link of
           $\sigma_f$ in $\partial P^*$.
          Then the vertices of $\mathrm{Lk}(\sigma_f,\partial P^*)$ are
           $\{ \sigma_F \,|\, F\in \mathcal{F}(f^{\perp})\}$.
           
           \begin{rem}
          The condition (ii) in Theorem~\ref{thm-Fund} is equivalent to saying that
          two vertices in $\mathrm{Lk}(\sigma_f,\partial P^*)$ are connected by
          an edge in $\mathrm{Lk}(\sigma_f,\partial P^*)$ if and only if they are
           connected by an edge in
          $\partial P^*$. But in general this condition does not imply that
           $\mathrm{Lk}(\sigma_f,\partial P^*)$ is a full subcomplex of 
          $\partial P^*$ (see~\cite[p.26]{BP02}). For example when $P$ is the $3$-simplex $\Delta^3$, any $2$-face $f$ of $P$ satisfies the condition (ii) in Theorem~\ref{thm-Fund} while $\mathrm{Lk}(\sigma_f,\partial P^*)$ is not a full subcomplex of
           $\partial P^*=\partial \Delta^3$.
          \end{rem}
           \n
          
          A \emph{$3$-belt} on a simple polytope $P$ consists of 
          three facets $F_1,F_2, F_3$ of $P$ which pairwise 
          intersect but have no common intersection.   
           \n
           
           \begin{cor} \label{Cor:3-belt}
            Let $M$ be a small cover over a simple polytope $P$. 
            For a facet $F$ of $P$,
            the facial submanifold $M_F$ is $\pi_1$-injective in $M$
             if and only if $F$ is not contained
            in any $3$-belt on $P$.
           \end{cor}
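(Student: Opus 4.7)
The plan is that Corollary~\ref{Cor:3-belt} should follow essentially by unwinding Theorem~\ref{thm-Fund} in the case where the face $f$ is itself a facet $F$ of $P$. No new topology is required; the work is purely combinatorial once we know Theorem~\ref{thm-Fund}.

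First, I would identify $\mathcal{F}(F^{\perp})$ concretely. Because $P$ is simple, any non-empty intersection of two distinct facets is a codimension-two face of $P$, which has dimension $\dim(F)-1$. Consequently a facet $F'$ of $P$ lies in $\mathcal{F}(F^{\perp})$ if and only if $F'\neq F$ and $F\cap F'\neq\varnothing$. In other words, $\mathcal{F}(F^{\perp})$ is exactly the set of facets of $P$ distinct from $F$ and adjacent to $F$.

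Next, I would rewrite condition (ii) of Theorem~\ref{thm-Fund} under this identification. It becomes: for any two facets $F_1,F_2$ of $P$ (distinct from $F$) with $F\cap F_1\neq\varnothing$ and $F\cap F_2\neq\varnothing$, one has $F\cap F_1\cap F_2\neq\varnothing$ whenever $F_1\cap F_2\neq\varnothing$. I would then argue this is just the negation of the statement ``$F$ is contained in a $3$-belt.'' In one direction, if $\{F,F_1,F_2\}$ is a $3$-belt, then $F_1,F_2\in\mathcal{F}(F^{\perp})$ and $F_1\cap F_2\neq\varnothing$ while $F\cap F_1\cap F_2=\varnothing$, so (ii) fails. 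In the other direction, a failure of (ii) furnishes $F_1,F_2\in\mathcal{F}(F^{\perp})$ with $F_1\cap F_2\neq\varnothing$ but $F\cap F_1\cap F_2=\varnothing$; here I should briefly observe that $F_1\neq F_2$ (else $F\cap F_1\cap F_2=F\cap F_1\neq\varnothing$) and that $F\notin\{F_1,F_2\}$ by definition of $\mathcal{F}(F^{\perp})$, so $\{F,F_1,F_2\}$ is genuinely a $3$-belt containing $F$.

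Applying Theorem~\ref{thm-Fund} then yields the equivalence claimed in the corollary. I do not foresee a substantive obstacle: the proof is a short bookkeeping argument, and the only thing to be vigilant about is the non-degeneracy issues (ensuring $F_1\neq F_2$ and that the three facets in a putative $3$-belt really are pairwise distinct), which are handled as above.
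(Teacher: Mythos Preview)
Your proposal is correct and is exactly the intended unwinding: the paper states Corollary~\ref{Cor:3-belt} without a separate proof, treating it as an immediate specialization of Theorem~\ref{thm-Fund} to the case $f=F$, which is precisely what you have written out. Your care with the degeneracy checks ($F_1\neq F_2$ and $F\notin\{F_1,F_2\}$) is appropriate and completes the argument.
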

           
         In addition, we obtain the following description of 
         aspherical small covers in terms of the $\pi_1$-injectivity of their
         facial submanifolds.\n
         
      \begin{prop}[Proposition~\ref{Prop:Flag-Inject}]
           A small cover $M$ over a simple polytope $P$
            is aspherical if and only if all the facial submanifolds of 
            $M$ are $\pi_1$-injective in $M$.
    \end{prop}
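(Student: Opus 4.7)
The plan is to combine Theorem~\ref{thm-Fund} with the Davis--Januszkiewicz--Scott result already cited in the introduction: $M$ is aspherical if and only if the simple polytope $P$ is flag. Theorem~\ref{thm-Fund} converts the $\pi_1$-injectivity of each facial submanifold $M_f$ into the purely combinatorial condition (ii) on the face $f$. Thus it suffices to prove the combinatorial equivalence: $P$ is flag if and only if condition (ii) of Theorem~\ref{thm-Fund} holds for every proper face $f$ of $P$.

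For the ``flag implies condition (ii) for every $f$'' direction, I would write $f = F_1 \cap \cdots \cap F_m$ and take $F, F' \in \mathcal{F}(f^{\perp})$ with $F \cap F' \neq \varnothing$. Because $F$ meets $f$ it meets each $F_i$; the same holds for $F'$; and $F_i \cap F_j \supset f \neq \varnothing$. So the family $\{F_1, \ldots, F_m, F, F'\}$ is pairwise intersecting, and flagness forces a common point, which lies in $f \cap F \cap F'$.

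For the converse I would argue by contradiction: assume condition (ii) for every proper face and choose a \emph{minimal} collection $\{F_1, \ldots, F_k\}$ of pairwise intersecting facets with empty common intersection (necessarily $k \geq 3$). The key choice is $f := F_1 \cap \cdots \cap F_{k-2}$. Minimality makes both $f \cap F_{k-1}$ and $f \cap F_k$ non-empty, and since $P$ is simple and $F_{k-1}, F_k$ are distinct from $F_1, \ldots, F_{k-2}$, both intersections are faces of codimension exactly one inside $f$; hence $F_{k-1}, F_k \in \mathcal{F}(f^{\perp})$. Yet $F_{k-1} \cap F_k \neq \varnothing$ while $f \cap F_{k-1} \cap F_k = F_1 \cap \cdots \cap F_k = \varnothing$, contradicting condition (ii) for $f$.

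The only subtle point in the plan is the choice of $f$ in the reverse direction: one might initially try $f = F_1 \cap \cdots \cap F_{k-1}$, but then $F_k$ need not meet $f$ and hence need not lie in $\mathcal{F}(f^{\perp})$. Dropping one more facet is exactly what makes both $F_{k-1}$ and $F_k$ transverse to $f$ via minimality. The boundary case $k = 3$ is a sanity check: $f$ is then a facet and the argument specializes to Corollary~\ref{Cor:3-belt}. Apart from this, the proof is essentially bookkeeping given Theorem~\ref{thm-Fund}, so no serious analytical or topological obstacle should arise.
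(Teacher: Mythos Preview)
Your proposal is correct. Both you and the paper reduce the statement, via Theorem~\ref{thm-Fund} and the Davis--Januszkiewicz--Scott criterion, to the purely combinatorial claim that $P$ is flag if and only if condition~(ii) holds for every proper face $f$; the ``only if'' direction is handled the same way (the paper simply asserts it, you spell out the pairwise-intersection argument). The difference lies in the ``if'' direction. The paper passes to the dual simplicial sphere $K_P=\partial P^*$, invokes an external characterization of flagness (that $K_P$ is flag iff every link $\mathrm{Lk}(\sigma,K_P)$ has no empty triangle), and then runs an induction on dimension: Corollary~\ref{Cor:3-belt} handles the empty simplex, the inductive hypothesis applied to each $M_F$ makes every vertex link flag, and links of higher simplices are intersections of vertex links. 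Your minimal-counterexample argument is more direct and entirely self-contained: taking a minimal pairwise-intersecting family $\{F_1,\dots,F_k\}$ with empty total intersection and setting $f=F_1\cap\cdots\cap F_{k-2}$ immediately produces a violation of condition~(ii) without any induction, any passage to the dual complex, or any appeal to the cited lemma on flag complexes. The paper's route has the mild advantage of tying the result to the standard link-wise description of flagness; yours is shorter and avoids the external citation. Your observation that dropping \emph{two} facets (rather than one) is what makes both $F_{k-1}$ and $F_k$ land in $\mathcal{F}(f^{\perp})$ via minimality is exactly the point that makes the argument work.
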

           
         \begin{rem}
          For a small cover $M$, a facial submanifold $M_f$ is $\pi_1$-injective in $M$
           does not imply that $M_{f'}$ is $\pi_1$-injective in $M$ for any 
           $f'\subsetneq f$.
          Conversely, that $M_{f'}$ is $\pi_1$-injective in $M$ 
          for all $f'\subsetneq f$
          does not imply that $M_f$ is $\pi_1$-injective in $M$ either.\n
          \end{rem}

       \begin{exam} \label{Exam-Truncated-Prism}
         Let $M$ be any small cover over the $3$-dimensional 
         simple polytope $P_1$ or $P_2$ shown in
          Figure~\ref{p:Truncated-Prism}. 
            By Theorem~\ref{thm-Fund}, we can conclude the following. 
            
            \begin{itemize}
              \item For the triangular facet $F$ of $P_1$, the facial submanifold $M_F$
               is $\pi_1$-injective in $M$. 
               But for any $1$-face $f\subset F$, $M_f$ is not $\pi_1$-injective in $M$.\n 
               
               \item For the hexagonal facet $F'$ of $P_2$ whose vertex set is
               $\{v_1, v_2,v_3,v_4,v_5,v_6\}$,
                 the facial submanifold $M_{F'}$ is not $\pi_1$-injective 
                 in $M$. But for any face $f\subsetneq F'$, $M_f$ is
                 $\pi_1$-injective in $M$.
            \end{itemize}
       \end{exam}

      \begin{figure}
         \includegraphics[width=0.57\textwidth]{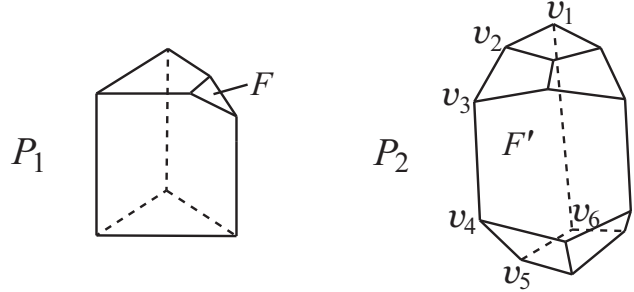}\\
          \caption{Examples of simple $3$-polytopes}\label{p:Truncated-Prism}
      \end{figure}
      
       Using Corollary~\ref{Cor:3-belt}, we can show
       that any $3$-dimensional small cover has a non-simply-connected embedded $\pi_1$-injective surface.\n

 \begin{prop}[Proposition~\ref{Prop:Facial-3-dim}]
     For any small cover $M$ over a $3$-dimensional simple polytope $P$, there
     always exists a facet $F$ of $P$ so that the facial submanifold $M_F$ is 
     $\pi_1$-injective in $M$.
 \end{prop}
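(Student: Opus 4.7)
My plan is to combine Corollary~\ref{Cor:3-belt} with a combinatorial fact about the dual simplicial $2$-sphere $\partial P^{*}$. Under duality, facets of $P$ correspond to vertices of $\partial P^{*}$, and $3$-belts on $P$ correspond exactly to \emph{empty triangles} of $\partial P^{*}$, meaning $3$-cycles in the $1$-skeleton whose three vertices do not span a $2$-face. So it suffices to produce a vertex of $\partial P^{*}$ lying in no empty triangle; Corollary~\ref{Cor:3-belt} will then yield the $\pi_{1}$-injectivity of the corresponding facial submanifold $M_{F}$.

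If $\partial P^{*}$ has no empty triangles at all, any vertex works, so assume there is at least one. The plan is then to choose an extremal empty triangle: consider all pairs $(T, D)$ where $T$ is an empty triangle and $D$ is one of the two closed disks in $\partial P^{*} \cong S^{2}$ bounded by $T$, and pick a pair minimizing $|D|$, the number of vertices in the interior of $D$. Since $T$ is empty, both disks bounded by $T$ must contain at least one interior vertex (otherwise that disk would be the $2$-face $T$ itself), so $|D| \geq 1$ and I can fix $v \in \mathrm{int}(D)$. The goal is to show that this $v$ lies in no empty triangle.

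Suppose for contradiction that $v$ is a vertex of some empty triangle $T' = \{v, x, y\}$. Because edges of a simplicial complex cannot cross, every neighbor of $v$ lies in $\overline{D}$, hence $x, y \in \overline{D}$ and $T' \subset \overline{D}$. Letting $E$ denote the closure of the other component of $S^{2} \setminus T$, the set $E \setminus T'$ is a closed disk with at most one boundary vertex or boundary edge removed, so is still connected; being disjoint from $T'$ it lies in one of the two open disks cut out by $T'$, forcing the other open disk $D'$ to be contained in $\mathrm{int}(D)$. Every vertex in $D'$ is therefore an interior vertex of $D$ different from $v$, so $|D'| \leq |D|-1$; meanwhile, emptiness of $T'$ forces $|D'| \geq 1$. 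The pair $(T', \overline{D'})$ thus contradicts the minimality of $|D|$, as desired.

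The main obstacle I anticipate is the topological step of confining one of the two disks bounded by $T'$ inside $\mathrm{int}(D)$; the connectedness of $E \setminus T'$ handles this uniformly, regardless of whether $x, y$ lie in $\mathrm{int}(D)$, one of them lies on $T$, or both lie on $T$. As a sanity check, the extreme case $|D|=1$ is consistent: the unique interior vertex of $D$ then has degree $3$ and every triangle through it is a $2$-face of $\partial P^{*}$, so no empty triangle can contain it.
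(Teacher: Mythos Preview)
Your proof is correct and follows essentially the same strategy as the paper: reduce via Corollary~\ref{Cor:3-belt} to showing that some facet avoids all $3$-belts, then run an innermost-disk argument on the pieces of the $2$-sphere cut out by a $3$-belt. The only differences are cosmetic --- you dualize to $\partial P^{*}$ and phrase the descent as a single extremal (minimality) choice, whereas the paper works on $\partial P$ directly and produces an infinite strictly decreasing chain of ``regular disks'' for a contradiction; your treatment of the step confining one side of $T'$ inside $D$ is in fact slightly more explicit than the paper's analogous step.
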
    
   
       By Proposition~\ref{Prop:Facial-3-dim} and some results in Schoen-Yau~\cite{SchYau79}, we can characterize
         all the $3$-dimensional small covers that admit
         Riemannian metrics with nonnegative scalar curvature. 
       \n
          
 \begin{prop}[Proposition~\ref{Prop:ScalCur-SmallCover}]
     A small cover $M$ over a simple $3$-polytope $P$ can hold a Riemannian metric with nonnegative
    scalar curvature if and only if $P$ is combinatorially equivalent to the cube $[0,1]^3$
   or a polytope obtained from $\Delta^3$ by a sequence of vertex cuts.
    In particular, all the orientable $3$-dimensional small covers that can hold Riemannian metrics with nonnegative scalar curvature are the two orientable real Bott manifolds in dimension $3$ and the
 connected sum of $k$ copies of $\R P^3$ for any $k\geq 1$.
 \end{prop}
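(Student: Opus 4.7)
The plan is to prove the two implications separately. On the necessity side I will combine Proposition~\ref{Prop:Facial-3-dim} and Corollary~\ref{Cor:3-belt} with the classical Schoen--Yau rigidity~\cite{SchYau79}: a closed orientable $3$-manifold that admits a metric of nonnegative scalar curvature cannot contain a $\pi_1$-injective closed surface of genus $\geq 2$, and such a manifold admits a metric of nonnegative scalar curvature if and only if it admits a metric of positive scalar curvature (equivalently, is a connected sum of spherical space forms and copies of $S^2 \times S^1$) or is flat.

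For sufficiency, when $P \cong [0,1]^3$ any small cover over $P$ is a $3$-dimensional real Bott manifold and carries a flat metric inherited from the standard $T^3$. When $P$ is obtained from $\Delta^3$ by $k$ vertex cuts I induct on $k$ to show that any small cover over $P$ is an equivariant connected sum of $(k+1)$ copies of $\R P^3$: the base case is the small cover $\R P^3$ over $\Delta^3$, and each vertex cut corresponds to an equivariant connected sum with $\R P^3$ at a fixed orbit. Since $\R P^3$ carries the round metric of positive scalar curvature and positive scalar curvature is preserved under connected sum in dimension $\geq 3$ by Gromov--Lawson surgery, $M$ admits a metric of positive (hence nonnegative) scalar curvature.

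For necessity, I induct on $|\mathcal{F}(P)|$, with base case $|\mathcal{F}(P)|=4$ giving $P=\Delta^3$. Suppose $M$ admits a metric of nonnegative scalar curvature, passing to the orientation double cover if $M$ is non-orientable. By Proposition~\ref{Prop:Facial-3-dim} there is a facet $F$ of $P$ with $M_F$ $\pi_1$-injective in $M$; as a $2$-dimensional small cover over the $m_F$-gon $F$, this $M_F$ is $\R P^2$ for $m_F=3$, a torus or Klein bottle for $m_F=4$, and a surface of genus $\geq 2$ for $m_F \geq 5$, the last case being excluded by Schoen--Yau. If $P$ has no $3$-belt then by Corollary~\ref{Cor:3-belt} every facet is $\pi_1$-injective, hence every facet is a triangle or a quadrilateral; the identity $\sum_F (6-m_F) = 12$ valid for all simple $3$-polytopes reduces to $3a+2b=12$ with $a,b$ the numbers of triangular and quadrilateral facets, whose only combinatorial solutions are $\Delta^3$, the cube, and the triangular prism. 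The triangular prism has a $3$-belt (its three square facets), so $P$ must be $\Delta^3$ or the cube. If $P$ does have a $3$-belt, the standard combinatorial surgery along the $3$-belt writes $P=P_1\#_\Delta P_2$ with each $P_i$ a simple $3$-polytope of strictly fewer facets and corresponding small covers $M=M_1\#M_2$. A nontrivial connected sum of closed $3$-manifolds cannot be flat (its fundamental group is a nontrivial free product, never virtually abelian), so $M$ actually admits positive scalar curvature, and hence so does each $M_i$ by Gromov--Lawson; the inductive hypothesis then places each $P_i$ in the vertex-cut class (the cube is excluded since it has no triangle facet to serve as the gluing locus), whence $P$ itself belongs to this class. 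The ``in particular'' assertion then follows once we recall that exactly two of the $3$-dimensional real Bott manifolds are orientable.

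The step I expect to be most delicate is the combinatorial surgery along a $3$-belt and the verification that it lifts to a compatible topological connected-sum decomposition $M=M_1\#M_2$ respecting the small-cover structure, so that the scalar-curvature hypothesis descends cleanly to each summand. Once this surgery is in hand, the induction closes immediately via Proposition~\ref{Prop:Facial-3-dim}, Corollary~\ref{Cor:3-belt}, and the Schoen--Yau rigidity.
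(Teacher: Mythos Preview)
Your overall strategy is reasonable and in places parallels the paper's, but there is a genuine gap in the sufficiency direction, and a more delicate one in the necessity direction.

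\textbf{Sufficiency.} Your inductive claim that \emph{every} small cover over $\mathrm{vc}^k(\Delta^3)$ is a connected sum of $k+1$ copies of $\R P^3$ is false once non-orientable small covers are allowed. Over the triangular prism $\mathrm{vc}^1(\Delta^3)=\Delta^2\times[0,1]$, take $\lambda(R_1)=e_1$, $\lambda(R_2)=e_2$, $\lambda(R_3)=e_1+e_2$ on the three rectangular facets and $\lambda(T_1)=\lambda(T_2)=e_3$ on the two triangles. This is a valid characteristic function and the resulting small cover is $\R P^2\times S^1$, which is prime and certainly not $\R P^3\#\R P^3$. The reverse operation you rely on, collapsing the new triangular facet back to a vertex, does not always yield a valid characteristic function on $\mathrm{vc}^{k-1}(\Delta^3)$: in this example the three rectangle colours are linearly dependent, so no vertex of $\Delta^3$ can carry them. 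The paper sidesteps this by proving the analogous statement for $\R\mathcal{Z}_P$ first (Proposition~\ref{Prop:ScalCur-RZ_P}), where an \emph{invariant} metric of positive scalar curvature is produced via equivariant surgery (Theorem~\ref{Thm:Surgery}); this metric then descends to every small cover over $P$, orientable or not. Your argument, as written, establishes sufficiency only in the orientable case.

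\textbf{Necessity.} Your $3$-belt surgery step ``$M=M_1\# M_2$'' is exactly the point you flagged as delicate, and it does require justification you have not supplied. When the colours $\lambda(F_1),\lambda(F_2),\lambda(F_3)$ on the $3$-belt span only a $2$-dimensional subspace of $(\Z_2)^3$, the preimage in $M$ of the separating triangle consists of \emph{two} disjoint $2$-spheres rather than one, and the resulting decomposition is not simply $M_1\# M_2$; moreover the colour on the new triangular facet of each $P_i$ is no longer uniquely determined, so the small covers $M_i$ are not canonical. The paper avoids this by passing to $\R\mathcal{Z}_P$ (which is always orientable, so Schoen--Yau applies without the double-cover manoeuvre) and by \emph{shrinking triangular facets} rather than cutting along $3$-belts; this has the advantage that $\R\mathcal{Z}_P$ decomposes cleanly as a connected sum of copies of $\R\mathcal{Z}_Q$ and $S^2\times S^1$ by Lemma~\ref{Lem:Vertex-Cut}, with no dependence on any colouring.

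Your no-$3$-belt / Euler-characteristic argument is a nice alternative to the paper's, and the passage to the orientation double cover for Schoen--Yau can be made to work with a little care; but as it stands the proof is incomplete, chiefly because of the sufficiency gap.
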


         The paper is organized as follows. In section~\ref{Sec:Presentaion}, we 
         construct some explicit presentation
          of the fundamental group of a small cover $M$ over
         a simple polytope $P$ (see Proposition~\ref{Prop:Presentation}) 
          by constructing a special cell decomposition of $M$. 
         Using this presentation of $\pi_1(M)$ and the presentation of the right-angled 
         Coxeter group $W_P$, we
         construct an explicit monomorphism from $\pi_1(M)$ into $W_P$.
         In section~\ref{Sec:Facial-SubMfd}, we obtain 
         a necessary and sufficient condition for
         a facial submanifold $M_f$ of $M$ to be $\pi_1$-injective (Theorem~\ref{thm-Fund}).
         Moreover, we determine the kernel of the homomorphism
         $(j_f)_*: \pi_1(M_f,v) \rightarrow \pi_1(M,v)$ in Theorem~\ref{thm-Fund-Kernel}.
         In section~\ref{Sec:ScalarCurvature}, we show in Proposition~\ref{Prop:Facial-3-dim} that 
         any $3$-dimensional small cover has a non-simply-connected embedded $\pi_1$-injective surface. 
         Then we characterize all the $3$-dimensional small covers and
           real moment-angle manifolds that admit Riemannian metrics with nonnegative scalar curvature.
          
          \vskip .6cm

 \section{Presentations of the fundamental groups of small covers} \label{Sec:Presentaion}
   
   \subsection{Cell decompositions of small covers} \label{Subsec-Cell} \ \n
     To obtain a presentation of the fundamental group of a small cover $M$, we 
     can use the $2$-skeleton of a
     cell decomposition of $M$. In~\cite{DaJan91}
    two kinds of cell decompositions of small covers are constructed.
    \begin{itemize}
     \item[(I)] The first construction is to use a special type of Morse 
     functions on $P$. 
    The corresponding cell decompositions of the small cover $M$ 
     are ``perfect'' in the sense that
    their cellular chain complexes with $\Z_2$-coefficients
      have trivial boundary maps. 
      \item[(II)] The second construction is to use a 
       standard cubical subdivision of $P$ and decompose the small 
       cover $M$ as a union of big cubes
       via the gluing procedure in~\eqref{Equ:Quo-SC}.
      \end{itemize}
    
        But it is not so convenient for us to write
      a presentation of $\pi_1(M)$ from either of these cell decompositions.
      In the construction (I), the gradient flow of the Morse function on $M$ may
      flow from one critical point to another one with equal or higher index.
      So to obtain the representatives of the generators of $\pi_1(M)$, we need to make
      some modification of the flow lines which could be hard to handle. 
       In the construction (II), there are more than one $0$-cell in the decomposition. 
       So in reality we need to shrink a maximal tree in the $1$-skeleton to a point to
      obtain the representatives of the generators of $\pi_1(M)$. But there is no 
      natural choice
      of a maximal tree in the $1$-skeleton of this cell decomposition.
       In the following, we will slightly modify the
       construction (II) and 
      obtain a new cell decomposition of $M$ which has only one $0$-cell.\n
      
      Recall how to obtain a cubical subdivision of an $n$-dimensional
       simple polytope $P$ (see~\cite[Sec.1.2]{DavJanScott98}). First of all, the set of faces of $P$, 
       partially ordered by inclusion, forms a poset denoted by $\mathcal{P}$.
       Let $\mathrm{FL}(\mathcal{P})$ be the \emph{order complex} of $\mathcal{P}$ which 
       consists of all flags in $\mathcal{P}$. For any flag 
       $\alpha = (f_1 <\cdots <f_k) \in \mathrm{Fl}(\mathcal{P})$, let $|\alpha|=k$.\n
       
       For any $f, f' \in \mathcal{P}$ with $f \leq f'$, we have the following definitions.
        \begin{itemize}
          \item Denote by $[f,f']$ be the subposet 
          $\mathcal{P}_{\geq f} \cap \mathcal{P}_{\leq f'}$ of $\mathcal{P}$.\n
          
          \item Denote by $[f,f')$ be the subposet 
          $\mathcal{P}_{\geq f} \cap \mathcal{P}_{< f'}$ of $\mathcal{P}$.
        \end{itemize} 
        The order complex of $[f,f']$ and $[f,f')$ are denoted by $\mathrm{FL}([f,f'])$ and
         $\mathrm{FL}([f,f'))$, respectively. Then  $\mathrm{FL}([f,f'))$ is a subcomplex
         of $\mathrm{FL}([f,f'])$.
 \n
       For any face $f$ of $P$, let $b_f$ be the barycenter of $f$ (any point
        in the relative interior of $f$ would suffice).
       Then any flag $\alpha = (f_1 <\cdots <f_k) \in \mathrm{Fl}(\mathcal{P})$
       determines a unique simplex $\Delta_{\alpha}$ of dimension $|\alpha|-1$ 
       which is the convex hull of
        $b_{f_1}, \cdots, b_{f_k}$. It is clear that 
        the collection of simplices $\{ \Delta_{\alpha} \,|\, 
        \alpha \in \mathrm{Fl}(\mathcal{P})\}$ defines a triangulation of $P$, denoted by
        $\mathcal{T}(P)$, is called the
        \emph{barycentric subdivision} of $P$.
   \n
            
        For any $f\leq f' \in \mathcal{P}$, define
        \begin{equation} \label{equ:standard-cube}
         \square_{[f,f']} = \bigcup_{\alpha\in \mathrm{FL}([f,f'])} \Delta_{\alpha};  \ \
         \Delta_{[f,f')} =   \bigcup_{\alpha\in \mathrm{FL}([f,f'))} \Delta_{\alpha}.
         \end{equation}
          Note that $\Delta_{[f,f')}$ is simplicially isomorphic
          the barycentric subdivision of a simplex of dimension
          $\dim(f')-\dim(f) -1$. The complex $\square_{[f,f']}$ is the cone of 
          $\Delta_{[f,f')}$ with the point $b_{f'}$, i.e.
            $ \square_{[f,f']} = \mathrm{Cone}_{b_{f'}} (\Delta_{[f,f')})$.
          In fact $\square_{[f,f']}$ is simplicially isomorphic to the
       standard simplicial subdivision of the cube of dimension $\dim(f')-\dim(f)$.
         The \emph{standard cubical subdivision} of $P$, denoted by $\mathcal{C}(P)$,
          is the subdivision of $P$
         into $\{ \square_{[f,f']} \,|\, f \leq f' \in P\}$. \n
      
      For a small cover $M$ over an $n$-dimensional simple polytope $P$, the cubical subdivision of $P$ and the
       construction~\eqref{Equ:Quo-SC} of $M$ determines a
      cubical decomposition of $M$, denoted by $\mathcal{C}^s(M)$. The cubes in  
      $\mathcal{C}^s(M)$ are called \emph{small cubes} which are images of the cubes in the 
      $2^n$ copies of $P$ under the quotient map $\Theta$ in~\eqref{Equ:Theta}. 
         For any face $f$ of $P$, we define a family of \emph{big cubes} associated to $f$ 
         in $M$ by:
    \[ C^{(g)}_f = \bigcup_{h\in g+G_f} \Theta(\square_{[f,P]}, h), \ g\in (\Z_2)^n, f\in \mathcal{P}. \]
      The $G_f \subset (\Z_2)^n$ is defined in~\eqref{Equ:G_f}. 
      Let $\mathcal{C}(M)$ denote the set of all big cubes in $M$.
            $$ \mathcal{C}(M) = \{ C^{(g)}_f \,|\, f\in \mathcal{P},  g\in (\Z_2)^n\}.$$
       Notice that $ C^{(g)}_f=  C^{(g')}_f$ if
      $g - g'\in G_f$. So there are exactly $2^{\dim(f)}$ big cubes associated to the face $f$ in 
      $\mathcal{C}(M)$. In particular, there is only one big cube in
      $\mathcal{C}(M)$ associated to a vertex $v$ of $P$, denoted by $C_v$. 
      In addition, there are exactly 
       $2^n$ $0$-cubes in $\mathcal{C}(M)$ which are 
       $\{ C^{(g)}_P\,|\, g\in (\Z_2)^n \}$, where $P$ is considered as a face of itself.
         The reader is referred to~\cite[Sec.1.2]{DavJanScott98} or~\cite[Ch.4]{BP02} for more details of this construction.\n
       
          Since there are $2^n$ $0$-cells in $\mathcal{C}(M)$, 
          it is not so convenient for us to 
          write a presentation of $\pi_1(M)$ from $\mathcal{C}(M)$. 
          But notice that for any vertex $v$ of $P$, all the $0$-cells in $\mathcal{C}(M)$
          are contained in the big 
          $n$-cube $C_v$. In the rest of our paper,
           we identify $v$ with the center of $C_v$. 
          If we
          shrink $C_v$ to the point $v$,
          all other big cubes in $\mathcal{C}(M)$ will be deformed simultaneously to give us a new cell
          decomposition of $M$, denoted by $\mathcal{D}_v(M)$ (see Figure~\ref{p:Q_v}).
           Specifically, 
           for any face $f$ of $P$ and any $g\in (\Z_2)^n$, let $D^{(g)}_{f}$ denote the open cell in
           $\mathcal{D}_v(M)$ that comes from the interior of the big cube $C^{(g)}_f$. 
           So all the open cells in $\mathcal{D}_v(M)$ are
           \begin{equation}
            \mathcal{D}_v(M) =  \{ D^{(g)}_{f}\,|\, f \subset P, g\in (\Z_2)^n \}.
           \end{equation}
          It is clear that for any face $f$ of $P$, 
          $D^{(g)}_f=  D^{(g')}_f$ for any $g - g'\in G_f$, $g,g'\in (\Z_2)^n$.
          Let $\lambda$ be the characteristic function of $M$.
          Then the $2$-skeleton of $M$ with respect to $\mathcal{D}_v(M)$ consists 
          of the following cells.
           \begin{itemize}
            \item The only $0$-cell is $v$. 
                        \n
             \item The open $1$-cells are
            $\{  D^{(g)}_{F} \,|\, v\notin F\in \mathcal{F}(P), g\in (\Z_2)^n \}$, where
                $D^{(g)}_{F} = D^{(g+\lambda(F))}_{F}$ for any $g\in (\Z_2)^n$.
                Note that if a facet $F$ contains $v$, 
                then $C^{(g)}_F \subset C_v$ for all $g\in(\Z_2)^n$, which
                implies $D^{(g)}_{F} = v$.\n
           
         \item The open $2$-cells are
             $\{  D^{(g)}_{F \cap F'} \,|\, F \cap F' \neq \varnothing, F, F'\in \mathcal{F}(P),
                       g\in (\Z_2)^n \}$,   where 
              $D^{(g)}_{F \cap F'} =  D^{(g+\lambda(F))}_{F \cap F'} = 
              D^{(g+\lambda(F'))}_{F \cap F'} = D^{(g+\lambda(F)+\lambda(F'))}_{F \cap F'}$
              for any $g\in (\Z_2)^n$.
     \end{itemize}

         \n

         \begin{figure}
         \includegraphics[width=0.93\textwidth]{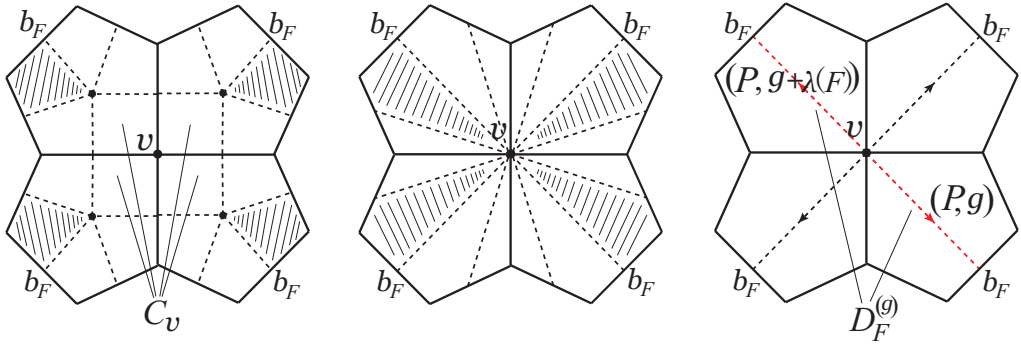}\\
          \caption{Shrinking the big cube $C_v$ to the point $v$}\label{p:Q_v}
      \end{figure}

    Since every $1$-cell $D^{(g)}_{F}$ in $\mathcal{D}_v(M)$ is attached to $v$, 
    $D^{(g)}_{F}$ along with an orientation determines a generator of the fundamental group of $M$.  
    In the polytope $P$, we orient the line segment 
     $\overline{vb_{F}}$ by going from $v$ to $b_{F}$, denoted by $\overrightarrow{vb_{F}}$
     \begin{itemize}
      \item For any facet $F$ not containing $v$, the closure of $D^{(g)}_{F}$ is the union of 
    $\Theta((\overline{vb_{F}},g))$ and $\Theta((\overline{vb_{F}}, g+\lambda(F))$
    as a set (see Figure~\ref{p:Q_v}). Considering the orientation, let
           $\beta_{F,g}$ denote the following oriented closed path based at $v$:
           $$\beta_{F,g} = \Theta((\overrightarrow{vb_{F}},g)) \cdot
            \Theta((\overrightarrow{vb_{F}}, g+\lambda(F))^{-1},\ g\in (\Z_2)^n.$$
           Then $\beta_{F,g}$ and $\beta_{F,g+\lambda(F)}$ correspond to the same
           $1$-cell $D^{(g)}_{F}$, but they have opposite orientations. So we have
        $\beta_{F,g+\lambda(F)} = \beta_{F,g}^{-1}$ for any $g\in (\Z_2)^n$.\n
     
     \item For any facet $F$ containing $v$, since $D^{(g)}_{F}=v$, 
       we let $\beta_{F,g}$ denote the trivial loop at the point $v$ 
       for any $g\in (\Z_2)^n$, and define $\beta_{F,g+\lambda(F)} = \beta_{F,g}^{-1}$. \n
     \end{itemize}

    We can consider $\{ \beta_{F,g}\,|\, F\in\mathcal{F}(P), g\in(\Z_2)^n \}$ 
     as a set of generators for $\pi_1(M,v)$. Moreover,
    any $2$-cell $D^{(g)}_{F \cap F'}$ in $\mathcal{D}_v(M)$ determines a relation
    $$\beta_{F,g}\beta_{F',g+\lambda(F)}=\beta_{F',g}\beta_{F,g+\lambda(F')}, \ g\in(\Z_2)^n.$$ 
    This can be easily seen from the picture of $D^{(g)}_{F \cap F'}$ in 
    Figure~\ref{p:Local-Model}.
      \n
               
         \begin{figure}
         \includegraphics[width=0.97\textwidth]{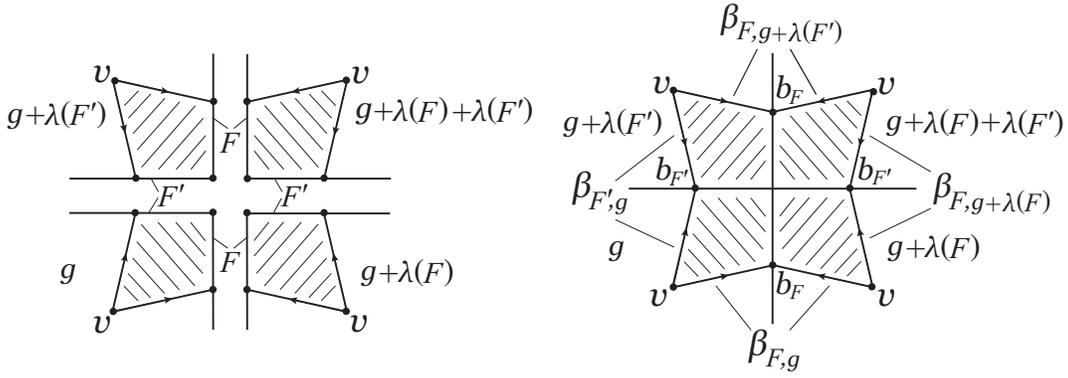}\\
          \caption{The cell $D^{(g)}_{F \cap F'}$}\label{p:Local-Model}
      \end{figure}
   
   For any facet $F$ of $P$, we define a transformation $\sigma_F: (\Z_2)^n \rightarrow (\Z_2)^n$ by:
      \begin{equation} \label{Equ:Sigma_F}
         \sigma_F(g) = g+\lambda(F), \ g\in (\Z_2)^n. 
       \end{equation}  
    Then we obtain the following proposition from the above discussion.
     
     \begin{prop} \label{Prop:Presentation}
      By the above notations, a presentation of $\pi_1(M,v)$ is given by   
      \begin{align}
     \big{\langle}  \beta_{F,g}, F\in \mathcal{F}(P), \, & g\in (\Z_2)^n \, |\, 
       \beta_{F,g}\beta_{F,\sigma_F(g)}=1, \forall g\in (\Z_2)^n; 
           \label{Relation-0} \\
       &\ \beta_{F,g}\beta_{F',\sigma_F(g)}=\beta_{F',g}\beta_{F,\sigma_{F'}(g)}, 
        F\cap F'\neq \varnothing, \forall g\in (\Z_2)^n; \label{Relation-1} \\
      & \ \beta_{F,g}=1,\, v\in F,  \forall g\in (\Z_2)^n \label{Relation-2}
           \big{\rangle}. 
\end{align}
\end{prop}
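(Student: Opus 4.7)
The plan is to apply the standard fact that for a CW complex with a single $0$-cell, $\pi_1$ is presented by taking one generator per $1$-cell (oriented) and one relation per $2$-cell (read off from the attaching map). The cell decomposition $\mathcal{D}_v(M)$ built in the previous subsection is designed precisely so that the only $0$-cell is $v$: all $2^n$ vertices of $\mathcal{C}(M)$ lie inside the big cube $C_v\cong D^n$, and collapsing $C_v$ to its center $v$ yields a space homeomorphic to $M$ (since $C_v$ is a disk glued to the rest of $M$ along its boundary) together with a CW structure in which each big cube $C^{(g)}_f$ with $f\ne P$ descends to a single cell $D^{(g)}_f$. Once this is justified, the proposition reduces to a bookkeeping exercise on the $2$-skeleton.

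First I would list the $1$-cells and generators. Every open $1$-cell $D^{(g)}_F$ has both endpoints at $v$, so orienting it gives a loop in $M$ based at $v$; the concrete path $\beta_{F,g}=\Theta(\overrightarrow{vb_F},g)\cdot\Theta(\overrightarrow{vb_F},g+\lambda(F))^{-1}$ is just a choice of such an orientation. The identification $D^{(g)}_F=D^{(g+\lambda(F))}_F$ forces the corresponding two generators to be inverse to each other, giving \eqref{Relation-0}. If $v\in F$ then the big cube $C^{(g)}_F$ sits entirely inside $C_v$, so $D^{(g)}_F$ collapses to the point $v$ and $\beta_{F,g}$ becomes the trivial loop, giving \eqref{Relation-2}.

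Next I would read relations off the $2$-cells. For $F\cap F'\ne\varnothing$, the $2$-cell $D^{(g)}_{F\cap F'}$ is the image under $\Theta$ of a square spanned by $\overrightarrow{vb_F}$ and $\overrightarrow{vb_{F'}}$ in the four translates indexed by $g$, $g+\lambda(F)$, $g+\lambda(F')$, $g+\lambda(F)+\lambda(F')$ (cf.\ Figure~\ref{p:Local-Model}). Traversing its boundary yields
\[
\beta_{F,g}\,\beta_{F',\sigma_F(g)}\,\beta_{F,\sigma_{F'}(g)}^{-1}\,\beta_{F',g}^{-1}=1,
\]
which combined with \eqref{Relation-0} is exactly \eqref{Relation-1}. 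The identifications $D^{(g)}_{F\cap F'}=D^{(g+\lambda(F))}_{F\cap F'}=D^{(g+\lambda(F'))}_{F\cap F'}$ produce only further consequences of \eqref{Relation-1} and \eqref{Relation-0}, so no extra relation appears. Completeness then follows from the standard presentation theorem for $\pi_1$ of a CW $2$-complex with one vertex, since $\pi_1(M)$ depends only on the $2$-skeleton.

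The main subtlety is handling the degenerate cases when $v$ is a vertex of $F$ or of $F\cap F'$: some edges of the square defining $D^{(g)}_{F\cap F'}$ may be collapsed to $v$ in $\mathcal{D}_v(M)$. I would check case by case (using the local picture near $v$, where $C_v$ is a cone on part of $\partial P$) that the boundary loop of each $2$-cell, after applying \eqref{Relation-2} to kill the collapsed edges, still reduces to the relation \eqref{Relation-1}, so nothing is lost and no spurious relation is introduced. Once this verification is done, the presentation is complete.
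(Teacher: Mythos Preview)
Your proposal is correct and follows essentially the same approach as the paper: the paper's ``proof'' is simply the sentence ``Then we obtain the following proposition from the above discussion,'' where the preceding discussion constructs $\mathcal{D}_v(M)$, enumerates its $0$-, $1$-, and $2$-cells, and reads off the boundary word of each $2$-cell $D^{(g)}_{F\cap F'}$ from Figure~\ref{p:Local-Model}. Your write-up is in fact more careful than the paper's, since you explicitly address the degenerate cases where $v\in F$ or $v\in F\cap F'$ and argue that no spurious or missing relations arise.
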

   Note that if $F\cap F'\neq \varnothing$ where $v\notin F$, $v\in F'$,
   the relation~\eqref{Relation-1} can be simplified to
   $\beta_{F,g} = \beta_{F,\sigma_{F'}(g)}$ for any $g\in (\Z_2)^n$.
  So if $v\notin F$ and $F'_{1},\cdots, F'_l$ are all the facets of $P$ which contains $v$ and 
  which intersect $F$ at the same time, we have
       \begin{equation}  \label{Relation-3}
      \beta_{F,g} = \beta_{F,g'}, \, \forall g, g'\in \langle \lambda(F'_{1}),\cdots,\lambda(F'_l) 
       \rangle \subset (\Z_2)^n.
   \end{equation}  
  If we remove the redundant generators 
  $\{\beta_{F,g}=1,\, v\in F, \forall g\in (\Z_2)^n\}$ 
  from the above presentation of $\pi_1(M,v)$, we obtain a
  simplified presentation of $\pi_1(M,v)$:
   \begin{align}
     \big{\langle}  \beta_{F,g},\, & v\notin F\in \mathcal{F}(P), g\in (\Z_2)^n \, |\, 
       \beta_{F,g}\beta_{F,\sigma_F(g)}=1, v\notin F, \forall g\in (\Z_2)^n; 
            \label{Sim-Presentation} \\
       &\  \beta_{F,g}\beta_{F',\sigma_F(g)}=\beta_{F',g}\beta_{F,\sigma_{F'}(g)},
       v\notin F, v\notin F',  F\cap F'\neq \varnothing, \forall g\in (\Z_2)^n ; 
       \notag \\
       &  \ \beta_{F,g} = \beta_{F,\sigma_{F'}(g)}, 
         v\notin F, v\in F', F\cap F' \neq \varnothing,
          \forall g\in (\Z_2)^n  \notag
       \big{\rangle}.   
\end{align}
  
  It is clear that the above presentation of the fundamental group of $M$ only depends on
  the choice of the vertex $v$ and the ordering of the facets of $P$.\nn
  
  \begin{rem}
   For any small cover $M$ over a simple polytope $P$,
  the minimal number of generators of $\pi_1(M)$ 
    is equal to $m-n$ where $n=\dim(M)$ and $m$ is the number of facets of $P$. 
    Indeed, the number of generators of $\pi_1(M)$ cannot be less than
   the first $\Z_2$-Betti number $b_1(M;\Z_2) = m-n$, since there is an epimorphism
   $$\pi_1(M)\rightarrow H_1(M)\rightarrow H_1(M;\Z_2).$$
    On the other hand, we can obtain a 
   cell decomposition of $M$ from the construction (I) with a single $0$-cell and exactly 
   $m-n$ $1$-cells.\n
   
   But the presentation of $\pi_1(M)$ with minimal number of generators is not so
    useful for our study of the $\pi_1$-injectivity problem of the facial submanifolds of $M$.
  \end{rem}
  \nn
 
   \subsection{Universal covering spaces of small covers}\ \n
   
     For any simple polytope $P$, define
      $$\mathscr{L}_P = P\times W_P\slash \sim$$
     where $(p,\omega)\sim (p',\omega')$ if and only if $p=p'$ and $\omega'\omega^{-1}$
      belongs the
     the subgroup of $W_P$ that is generated by 
     $\{ s_F \,|\, F \ \text{is any facet of $P$ that contains $p$} \}$.
     Indeed, if $F_1,\cdots, F_k$ are all facets of $P$ containing $p$, 
     then the subgroup of $W_P$ generated by $s_{F_1},\cdots, s_{F_k}$
     is isomorphic to $(\Z_2)^k$. This implies  
       \begin{equation} \label{Equ:Local-Equiv}
         (p,\omega)\sim (p,\omega')  \Longleftrightarrow
        \omega' = s^{\varepsilon_1}_{F_1}\cdots s^{\varepsilon_k}_{F_k} \cdot
         \omega,\
        \varepsilon_1,\cdots, \varepsilon_k \in \{ 0,1\}.
        \end{equation}
     
     There is a canonical action of 
     $W_P$ on $\mathscr{L}_P$ defined by:
     \begin{equation} \label{Equ:Action-L}
        \omega'\cdot [(p,\omega)] = [(p,\omega'\omega)], \ p\in P, \omega,\omega'\in W_P, 
      \end{equation}  
      where $[(p,\omega)]$ is the equivalence class of $(p,\omega)$ in $\mathscr{L}_P$.
      \n
      
     By~\cite[Corollary 10.2]{Davis83}, $\mathscr{L}_P$
     is a simply connected manifold. Moreover, $\mathscr{L}_P$
      is aspherical if and only if $P$ is a flag polytope 
      (see~\cite[Proposition 3.4.]{KurMasYu15}).\n
      
       Let $M$ be a small cover over $P$ with characteristic function $\lambda$.
     We have a homomorphism $\phi: W_P\rightarrow (\Z_2)^n$ where $n=\dim(P)$
     and $\ker(\phi)\cong \pi_1(M)$ (see~\eqref{Equ:Fund-Group}).
     
     \begin{lem} \label{Lem:phi-zero}
       Suppose $F_1,\cdots, F_k$ are facets of $P$ with 
     $F_1\cap \cdots \cap F_k \neq \varnothing$. Then
      $\phi(s^{\varepsilon_1}_{F_1}\cdots s^{\varepsilon_k}_{F_k}) \neq 0$
     as long as $\varepsilon_1,\cdots,\varepsilon_k \in \{0,1\}$ are not all $0$.
     \end{lem}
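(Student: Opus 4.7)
The plan is to translate the group-theoretic claim into a linear-algebra statement about the characteristic function $\lambda$ and then invoke the non-degeneracy condition at a suitable vertex. Since $\phi$ is defined by $\phi(s_F) = \lambda(F)$ and takes values in the abelian group $(\Z_2)^n$, we have (writing $(\Z_2)^n$ additively)
\begin{equation*}
  \phi\bigl(s_{F_1}^{\varepsilon_1}\cdots s_{F_k}^{\varepsilon_k}\bigr)
   = \varepsilon_1\lambda(F_1) + \cdots + \varepsilon_k\lambda(F_k).
\end{equation*}
So the lemma reduces to showing that $\lambda(F_1),\dots,\lambda(F_k)$ are linearly independent in $(\Z_2)^n$.

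First I would use the simplicity of $P$. Because $F_1\cap\cdots\cap F_k \neq \varnothing$ and $P$ is simple, these facets meet in a face $f$ of codimension exactly $k$. In particular $\dim f = n-k \geq 0$, so $f$ has at least one vertex; pick any vertex $v$ of $f$.

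Next I would invoke non-degeneracy. Since $v$ is a vertex of $P$, it lies on exactly $n$ facets $F_{j_1},\dots,F_{j_n}$, whose $\lambda$-values form a basis of $(\Z_2)^n$. Because $v\in f = F_1\cap\cdots\cap F_k$, the facets $F_1,\dots,F_k$ all contain $v$ and so form a subset of $\{F_{j_1},\dots,F_{j_n}\}$. Hence $\lambda(F_1),\dots,\lambda(F_k)$ are part of a basis of $(\Z_2)^n$, and in particular are linearly independent. This forces $\sum_{i} \varepsilon_i\lambda(F_i) \neq 0$ whenever not all $\varepsilon_i$ vanish, completing the proof.

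There is no real obstacle here; the argument is a direct two-line application of non-degeneracy, and the only thing one must verify carefully is that $f$ has a vertex, which is immediate from $\dim f = n-k \geq 0$ (the case $k=n$, where $f$ is itself a vertex, is allowed).
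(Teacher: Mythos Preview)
Your proof is correct and follows essentially the same approach as the paper: both reduce the claim to the linear independence of $\lambda(F_1),\dots,\lambda(F_k)$ via the non-degeneracy of $\lambda$ at a vertex of the common intersection. The paper simply asserts ``without loss of generality $\lambda(F_i)=e_i$'' and computes, whereas you spell out explicitly why such a vertex exists and why the $\lambda(F_i)$ extend to a basis; the content is the same.
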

      \begin{proof}
        Without loss of generality, we can assume that
        $\lambda(F_1) = e_1,\cdots, \lambda(F_k) =e_k$ where
        $\{e_1,\cdots, e_n\}$ is a basis of $(\Z_2)^n$. So
        $\phi(s^{\varepsilon_1}_{F_1}\cdots s^{\varepsilon_k}_{F_k}) = \varepsilon_1e_1+
        \cdots + \varepsilon_k e_k \neq 0$ if
        $\varepsilon_1,\cdots,\varepsilon_k$ are not all $0$.
      \end{proof}
     
     The following proposition
      is contained in~\cite[Lemma 2.2.4]{DavJanScott98} (but with few 
      details for the proof). We give a proof here since
      some construction in the proof will be useful for our discussion later.\n
     
     \begin{prop} \label{prop:Iso-Action}
      The action of $\pi_1(M) \cong \ker(\phi) \subset W_P$ on $\mathscr{L}_P$
     through~\eqref{Equ:Action-L} is free with orbit space homeomorphic to
      $M$. So $\mathscr{L}_P$ is a universal covering of $M$.
     \end{prop}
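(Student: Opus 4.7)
The plan is to exhibit a continuous $\ker(\phi)$-equivariant map $\widetilde{\Phi}: \mathscr{L}_P \to M$ that realizes $M$ as the orbit space, and then to conclude via covering-space theory, using that $\mathscr{L}_P$ is simply connected by~\cite[Corollary 10.2]{Davis83}.

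First I would define $\widetilde{\Phi}$ by $[(p,\omega)] \mapsto [(p, \phi(\omega))]$. Well-definedness is the key compatibility check between the two quotient relations: if $F_1, \ldots, F_k$ are the facets of $P$ through $p$, then the subgroup $H_p \subset W_P$ generated by $s_{F_1}, \ldots, s_{F_k}$ is mapped by $\phi$ onto exactly $\langle \lambda(F_1), \ldots, \lambda(F_k)\rangle = G_{f(p)}$, which is precisely the subgroup governing the relation~\eqref{Equ:Quo-SC} defining $M$. Hence the local equivalence~\eqref{Equ:Local-Equiv} is pushed forward correctly, and by construction $\widetilde{\Phi}$ is $\ker(\phi)$-invariant with respect to the action~\eqref{Equ:Action-L}, so it factors as $\widetilde{\Phi} = \overline{\Phi} \circ \pi$ with $\pi: \mathscr{L}_P \to \mathscr{L}_P/\ker(\phi)$.

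Freeness of the $\ker(\phi)$-action is the cleanest step, and Lemma~\ref{Lem:phi-zero} does all the work: a stabilizer element of $[(p, \omega_0)]$ must lie in $H_p$, so it has the form $s_{F_1}^{\varepsilon_1}\cdots s_{F_k}^{\varepsilon_k}$ over facets all containing $p$ (hence with nonempty common intersection), and $\phi = 0$ on it forces every $\varepsilon_i = 0$. Combined with $\phi(H_p) = G_{f(p)}$, a short coset calculation ($\phi(\omega'\omega^{-1}) \in G_{f(p)}$ iff $\omega'\omega^{-1} \in H_p\cdot\ker(\phi)$) shows that the fibers of $\widetilde{\Phi}$ agree exactly with the $\ker(\phi)$-orbits, so $\overline{\Phi}$ is a continuous bijection.

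The main obstacle will be upgrading $\overline{\Phi}$ to a homeomorphism, equivalently showing that $\widetilde{\Phi}$ is a genuine covering map. I would handle this by choosing around each $p \in P$ a contractible neighborhood $U_p$ meeting only those facets through $p$; then $\mathscr{L}_P$ and $M$ are locally modeled as products $U_p \times (W_P/H_p)$ and $U_p \times ((\Z_2)^n/G_{f(p)})$ respectively, with $\widetilde{\Phi}$ the product of $\mathrm{id}_{U_p}$ with the bijection $W_P/H_p \to (\Z_2)^n/G_{f(p)}$ induced by $\phi$. The bookkeeping about how the $2^n$ copies of $P$ assembling $M$ correspond to cosets of $\ker(\phi)$ in $W_P$ is the delicate point, but once local triviality is established, $\widetilde{\Phi}$ is a covering map. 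Simple-connectedness of $\mathscr{L}_P$ then identifies it with the universal cover of $M$ and $\ker(\phi)$ with $\pi_1(M)$, consistent with~\eqref{Equ:Fund-Group}.
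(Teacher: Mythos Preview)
Your approach is correct in spirit and genuinely different from the paper's. The paper does not construct the map $\widetilde{\Phi}:[(p,\omega)]\mapsto [(p,\phi(\omega))]$ at all; instead it fixes a vertex $v=F_1\cap\cdots\cap F_n$, builds an explicit set-theoretic section $\gamma:(\Z_2)^n\to W_P$ by $\gamma(e_{i_1}+\cdots+e_{i_l})=s_{F_{i_1}}\cdots s_{F_{i_l}}$, and uses the elements $\xi_\omega=\gamma_{\phi(\omega)}\omega^{-1}\in\ker(\phi)$ to push every point of $\mathscr{L}_P$ into the subset $Q_v=P\times\gamma((\Z_2)^n)/\!\sim$. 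It then recognizes $Q_v$ as a fundamental domain for the $\ker(\phi)$-action and identifies the induced gluing on $\partial Q_v$ with the gluing~\eqref{Equ:Quo-SC} defining $M$. Your argument is cleaner and more invariant for the purpose of this proposition alone; the paper's argument, however, is doing double duty, since the section $\gamma$ and the elements $\xi_\omega$ (specialized to $\xi_{j,g}=\gamma_g s_{F_j}\gamma_{\sigma_j(g)}$) are precisely the ingredients used in the next subsection to define the explicit isomorphism $\Psi:\pi_1(M,v)\to\ker(\phi)$.

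One genuine slip to fix: in your local-triviality step you describe the fiberwise map $W_P/H_p\to(\Z_2)^n/G_{f(p)}$ as a \emph{bijection}. It is not---the source is infinite whenever $P$ has more than $n$ facets. What you want to say is that it is a surjection whose fibers are exactly the $\ker(\phi)$-orbits on $W_P/H_p$; this follows from $H_p\cap\ker(\phi)=\{1\}$ (Lemma~\ref{Lem:phi-zero}) together with $\phi^{-1}(G_{f(p)})=H_p\cdot\ker(\phi)$, which you already established in your coset calculation. With that correction, the local picture over $U_p$ is a disjoint union of sheets indexed by $W_P/H_p$ mapping to sheets indexed by $(\Z_2)^n/G_{f(p)}$, with $\ker(\phi)$ permuting the sheets of $\mathscr{L}_P$ simply transitively over each sheet of $M$, and local triviality of the covering follows.
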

     \begin{proof}
       For any $p\in P$, $\omega,\omega' \in W_P$,
        we see from~\eqref{Equ:Local-Equiv} and Lemma~\ref{Lem:phi-zero} that
       $$ (p,\omega) \sim (p, \omega'\omega) \Longrightarrow
               \omega' = 1 \ \text{or}\ \phi(\omega')\neq 0. $$
       So $(p,\omega) \nsim (p, \omega'\omega)$ for any
       $\omega' \neq 1 \in \ker(\phi)$. This implies that the action of
       $\ker(\phi)$ on $\mathscr{L}_P$ is free. \n
       
       Next, we show $\mathscr{L}_P\slash \ker(\phi)$ is homeomorphic to $M$.
       Choose a vertex $v$ of $P$ and let
        $F_1,\cdots, F_n$ be the $n$ facets of $P$ meeting $v$. Suppose all the facets
        of $P$ are $F_1,\cdots, F_n, F_{n+1},\cdots, F_m$.
       We can assume that $\lambda(F_i) =e_i$, $1\leq i \leq n$, where
        $\{e_1,\cdots, e_n\}$ is a basis of $(\Z_2)^n$. 
      For any $g\in (\Z_2)^n$, define
      \begin{equation}\label{Equ:gamma}
       \gamma_{g} = s_{F_{i_1}}\cdots s_{F_{i_l}} \in W_P, \ 
      g=e_{i_1} + \cdots + e_{i_l}\in (\Z_2)^n, 1\leq i_1 <\cdots < i_l \leq n.
      \end{equation}
     In particular, $ \gamma_{\lambda(F_i)} = \gamma_{e_i}=s_{F_i}$ and
      $\gamma_0 =1 \in W_P$.
     It is easy to see that
      \begin{equation} \label{Equ:gamma-property}
       \gamma_g^2 =1\in W_P , \ \ \phi(\gamma_g) =g, \ \forall g\in (\Z_2)^n.
      \end{equation}  
      So we have a monomorphism $\gamma :(\Z_2)^n \rightarrow W_P$ by mapping any
      $g\in(\Z_2)^n$ to $\gamma_g$. The image of $\gamma$ is the subgroup of $W_P$
      generated by $s_{F_1},\cdots, s_{F_n}$. Note that the definition of
       $\gamma$ depends on the vertex $v$ we choose.\n
      
      For any element $\omega\in W_P$, it is clear that
      $\gamma_{\phi(\omega)}\omega ^{-1} \in \ker(\phi)$.
      Define
      \begin{equation} \label{Equ:xi-omega}
         \xi_{\omega} = \gamma_{\phi(\omega)}\omega ^{-1} \in \ker(\phi) \subset W_P.  
      \end{equation} 
       The definition of
        $\xi_{\omega}$ depends on the
       choice of the vertex $v$ of $P$. By~\eqref{Equ:Action-L}, we have
       $$  \xi_{\omega} \cdot [(p,\omega)] = [(p,\gamma_{\phi(\omega)}], \ \forall p\in P,
         \omega\in W_P, $$
       So each orbit of the $\ker(\phi)$-action on $\mathscr{L}_P$ has a representative
       of the form $[(p,\gamma_g)]$
       where $g\in (\Z_2)^n$.\n
       
     Moreover, for any point $p$ in the interior of $P$,
      $[(p, \gamma_g)], [(p,\gamma_{g'})] \in \mathscr{L}_P$ are in the different
      orbits of the $\ker(\phi)$-action if $g\neq g'$. This is because
      for any $\omega\in \ker(\phi)$, we have
       $\omega\cdot [(p,\gamma_g)] = [(p, \omega \gamma_g)]$, so
       $\phi(\omega \gamma_g) = \phi(\gamma_g) = g \neq g' = \phi(\gamma_{g'})$.\n
      
       By the above discussion,
       a fundamental domain of the
      $\ker(\phi)$-action on the simply connected manifold $\mathscr{L}_P$ 
      can be taken to be the space $Q_v$ defined below.
      \begin{equation} \label{Equ:Q_v}
         Q_v = P\times \gamma((\Z_2)^n) \slash \sim 
       \end{equation}  
      where $(p,\gamma_g)\sim (p',\gamma_{g'})$ if and only if
       $p=p' \in F_1\cup \cdots \cup F_n$ and
       and $\gamma_{g'}\gamma_{g}^{-1}$ belongs to the subgroup of $\gamma((\Z_2)^n)$ spanned by
        $\{ s_{F_i} = \gamma_{\lambda(F_i)}  : p\in F_i, 1\leq i \leq n \}$.
        \n
       It is easy to see that
        $Q_v$ is the gluing of $2^n$ copies of $P$ by the same rule as~\eqref{Equ:Quo-SC} 
        along the facets $F_1,\cdots, F_n$
        while leaving other facets $F_{n+1},\cdots, F_m$ not glued.
        Furthermore, the gluing rule~\eqref{Equ:Local-Equiv} for $\mathscr{L}_P$ 
         tells us that the orbit space $\mathscr{L}\slash\ker(\phi)$
          is homeomorphic to the quotient of $Q_v$ by gluing 
         all the copies of $F_{n+1},\cdots, F_m$ on its boundary via the 
         rule in~\eqref{Equ:Quo-SC}, which is exactly the small cover $M$.
         So the proposition is proved.
     \end{proof}

    \nn
      
      \subsection{Isomorphism 
      from $\pi_1(M)$ to $\ker(\phi)$ defined by group presentations} \ \n

    For any small cover $M$ over a simple polytope 
    $P$, we use the presentation of $\pi_1(M)$ 
    in Proposition~\ref{Prop:Presentation}
    to construct an explicit isomorphism from
     $\pi_1(M)$ to $\ker(\phi)$ in this section. This isomorphism will be useful for us 
     to study the relations between the fundamental groups of $M$ and
     its facial submanifolds.\n
     
    Suppose all the facets of $P$ are $F_1,\cdots, F_n, F_{n+1},\cdots, F_m$ where
    $v=F_1 \cap \cdots \cap F_n$ is a vertex $P$. Then we have
      $$s_{F_i} s_{F_{i'}} = s_{F_{i'}} s_{F_i},\ 1\leq i,i'\leq n.$$ 
     We can assume that $\lambda(F_i)=e_i$, $1\leq i \leq n$ where $e_1,\cdots, e_n$ is 
     a basis of $(\Z_2)^n$. \n
     
    In the presentation of $\pi_1(M,v)$ in Proposition~\ref{Prop:Presentation},
    we let for brevity
     $$\beta_{j,g}=\beta_{F_j,g}, \ n+1\leq j \leq m.$$
     In addition, let  $\sigma_i = \sigma_{F_i}: (\Z_2)^n \rightarrow (\Z_2)^n$ where
      \begin{equation} \label{Equ:Sigma_i}
         \sigma_i(g) = g+\lambda(F_i), \ g\in (\Z_2)^n, 1\leq i \leq m. 
       \end{equation}  
      
      Then according to~\eqref{Sim-Presentation},
        a (simplified) presentation of $\pi_1(M,v)$ is given by:   
      \begin{align}
      \big{\langle}  \beta_{j,g}, n+1\leq j \leq m, g\in (\Z_2)^n\, & |\, 
       \beta_{j,g}\beta_{j,\sigma_j(g)}=1, n+1\leq j \leq m, \forall g\in (\Z_2)^n; 
           \notag \\
       & \ \beta_{j,g}\beta_{j',\sigma_j(g)}=\beta_{j',g}\beta_{j,\sigma_{j'}(g)},\, \forall g\in (\Z_2)^n\
          \text{where}
             \notag   \\
              & \ \ F_j\cap F_{j'}\neq \varnothing, n+1\leq j < j' \leq m; \notag  \\
        & \ \beta_{j,g} = \beta_{j,\sigma_{i}(g)}, \, \forall g\in (\Z_2)^n\ \text{where}  \notag \\
         &\ \  F_i\cap F_j\neq \varnothing, 1\leq i \leq n,  n+1\leq j \leq m 
         \big{\rangle} \label{Relation-2-Sim} 
    \end{align}

     On the other hand,
     we define a collection of elements of $W_P$ as follows.
     \begin{equation}\label{Equ:xi-def}
        \xi_{i,g} := \gamma_g s_{F_i} \gamma_{\sigma_i(g)} = 
     \gamma_g s_{F_i} \gamma_{g+\lambda(F_i)} \in W_P, \  1\leq i \leq m, 
      \forall g\in (\Z_2)^n.
     \end{equation} 
     Note that $\xi_{i,g} =1 \in W_P$ for any $1\leq i \leq n$ and $g\in (\Z_2)^n$.
     There are many relations between $\{ \xi_{j,g}\, |\, n+1\leq j \leq m,
     g\in (\Z_2)^n \}$ shown in the following lemma. \n

    \begin{lem} \label{Lem:xi}
     For any $g\in (\Z_2)^n$, we have
      \begin{itemize}
      \item[(i)] $\phi(\xi_{j,g})=0$, i.e. $\xi_{j,g} \in \ker(\phi)$,
        $n+1\leq j \leq m$;\n
      
      \item[(ii)]  $\gamma_{g} s_{F_{j}} = \xi_{j,g} \gamma_{\sigma_j(g)}$,
       $n+1\leq j \leq m$;\n
      
       \item[(iii)] $\xi_{j,g} \xi_{j,\sigma_j(g)}=1 \in W_P$,
        $n+1\leq j \leq m$;\n
       
       \item[(iv)] $\xi_{j,g}\xi_{j',\sigma_j(g)}=\xi_{j',g}\xi_{j,\sigma_{j'}(g)}$, 
        if $F_j\cap F_{j'}\neq \varnothing$ with 
          $n+1\leq j<j'\leq m$. \n
        
       \item[(v)] $\xi_{j,g} = \xi_{j,\sigma_i(g)}$, if $F_i\cap F_j\neq \varnothing, \,
              1\leq i \leq n, n+1\leq j \leq m$.
     \end{itemize}
     \end{lem}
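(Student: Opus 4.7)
The plan is to prove all five parts by direct computation from the definition $\xi_{j,g} = \gamma_g s_{F_j} \gamma_{\sigma_j(g)}$, relying on the two key algebraic facts: (a) $\gamma_h^2 = 1$ and $\phi(\gamma_h) = h$ for every $h \in (\Z_2)^n$, from \eqref{Equ:gamma-property}; and (b) the generators $s_{F_1}, \ldots, s_{F_n}$ pairwise commute in $W_P$ (since $F_1 \cap \cdots \cap F_n = v \neq \varnothing$), together with the Coxeter relation $(s_Fs_{F'})^2 = 1$ whenever $F \cap F' \neq \varnothing$.

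Parts (i), (ii), (iii) will be essentially immediate. For (i), applying $\phi$ to the definition gives $\phi(\xi_{j,g}) = g + \lambda(F_j) + \sigma_j(g) = g + \lambda(F_j) + g + \lambda(F_j) = 0$. For (ii), I will multiply $\xi_{j,g} \gamma_{\sigma_j(g)}$ and use $\gamma_{\sigma_j(g)}^2 = 1$ to collapse it to $\gamma_g s_{F_j}$. For (iii), I will substitute $\xi_{j,\sigma_j(g)} = \gamma_{\sigma_j(g)} s_{F_j} \gamma_g$ (using $\sigma_j^2 = \mathrm{id}$) and observe that the middle block $\gamma_{\sigma_j(g)} \gamma_{\sigma_j(g)} s_{F_j} s_{F_j}$ telescopes to the identity, leaving $\gamma_g \gamma_g = 1$.

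For (iv), the same telescoping yields
\[
\xi_{j,g} \xi_{j',\sigma_j(g)} = \gamma_g\, s_{F_j} s_{F_{j'}}\, \gamma_{\sigma_{j'}\sigma_j(g)}, \qquad
\xi_{j',g} \xi_{j,\sigma_{j'}(g)} = \gamma_g\, s_{F_{j'}} s_{F_j}\, \gamma_{\sigma_j\sigma_{j'}(g)},
\]
and since $\sigma_j$ and $\sigma_{j'}$ commute in $(\Z_2)^n$, the identity reduces to $s_{F_j} s_{F_{j'}} = s_{F_{j'}} s_{F_j}$, which holds by the hypothesis $F_j \cap F_{j'} \neq \varnothing$.

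The one step that requires genuine care, and the main obstacle I expect, is part (v). Here I will exploit the fact that $s_{F_1}, \ldots, s_{F_n}$ pairwise commute to conclude that the order in the product defining $\gamma_g$ in \eqref{Equ:gamma} is irrelevant modulo commutation, so that $\gamma_{\sigma_i(g)} = \gamma_{g+e_i} = s_{F_i}\gamma_g = \gamma_g s_{F_i}$. Then
\[
\xi_{j,\sigma_i(g)} = s_{F_i}\gamma_g \cdot s_{F_j} \cdot s_{F_i}\gamma_{\sigma_j(g)}.
\]
Pushing $s_{F_i}$ past $\gamma_g$ (commutation among the $s_{F_1},\ldots,s_{F_n}$) and then past $s_{F_j}$ (commutation from $F_i \cap F_j \neq \varnothing$, hypothesis of (v)) lets the two $s_{F_i}$'s cancel via $s_{F_i}^2 = 1$, producing $\gamma_g s_{F_j} \gamma_{\sigma_j(g)} = \xi_{j,g}$, as desired. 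The delicate point to articulate clearly is that the interaction of $\gamma_g$ with $s_{F_i}$ uses the vertex-induced commutativity inside $\{s_{F_1},\ldots,s_{F_n}\}$, while the interaction of $s_{F_i}$ with $s_{F_j}$ uses the separately-given incidence $F_i \cap F_j \neq \varnothing$.
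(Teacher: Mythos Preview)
Your proposal is correct and follows essentially the same route as the paper's proof: parts (i)--(iii) are dismissed as immediate from the definition, (iv) is handled by the same telescoping $\xi_{j,g}\xi_{j',\sigma_j(g)}=\gamma_g s_{F_j}s_{F_{j'}}\gamma_{\sigma_{j'}\sigma_j(g)}$ together with $s_{F_j}s_{F_{j'}}=s_{F_{j'}}s_{F_j}$, and (v) is proved exactly as the paper does, via $\gamma_{\sigma_i(g)}=\gamma_g s_{F_i}$ and the commutation $s_{F_i}s_{F_j}=s_{F_j}s_{F_i}$ coming from $F_i\cap F_j\neq\varnothing$. Your explicit justification of $\gamma_{g+e_i}=s_{F_i}\gamma_g$ from the pairwise commutativity of $s_{F_1},\ldots,s_{F_n}$ is a detail the paper leaves implicit, but otherwise the arguments are the same.
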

     \begin{proof}
      It is easy to prove (i)(ii)(iii) from the definition of $\xi_{j,g}$.
     As for (iv), we have
       \begin{align*}
           \xi_{j,g}\xi_{j',\sigma_j(g)}&= \gamma_g s_{F_j}\gamma_{\sigma_j(g)} \cdot
          \gamma_{\sigma_j(g)} s_{F_{j'}} \gamma_{\sigma_{j'}(\sigma_j(g))} =
          \gamma_g s_{F_j} s_{F_{j'}} \gamma_{\sigma_{j'}(\sigma_j(g))};  \\
         \xi_{j',g}\xi_{j,\sigma_{j'}(g)} &= \gamma_g s_{F_{j'}} 
         \gamma_{\sigma_{j'}(g)} \cdot  \gamma_{\sigma_{j'}(g)} s_{F_j} 
          \gamma_{\sigma_{j}(\sigma_{j'}(g))} = \gamma_g s_{F_{j'}}  s_{F_j} 
          \gamma_{\sigma_{j}(\sigma_{j'}(g))} .
       \end{align*}      
     Note $\sigma_{j'}(\sigma_j(g)) = \sigma_{j}(\sigma_{j'}(g)) = g+\lambda(F_j)+\lambda(F_{j'})$, and
     $F_j\cap F_{j'}\neq \varnothing$ implies that
       $s_{F_j}s_{F_{j'}} = s_{F_{j'}} s_{F_j} \in W_P$. So we obtain
       $\xi_{j,g}\xi_{j',\sigma_j(g)}=\xi_{j',g}\xi_{j,\sigma_{j'}(g)}$.\n
       
        As for (v), since when $F_i\cap F_j\neq \varnothing$, $s_{F_i}$ commutes
      with $s_{F_j}$ in $W$. Then 
      $$ \xi_{j,\sigma_i(g)} = \gamma_{\sigma_i(g)} s_{F_j}
                                        \gamma_{\sigma_j(\sigma_i(g))} 
                             = \gamma_{g} s_{F_i} 
           s_{F_j} s_{F_i} \gamma_{\sigma_j(g)} = \gamma_g s_{F_j} \gamma_{\sigma_j(g)}
                   =\xi_{j,g}.$$ 
     So the lemma is proved.                     
 \end{proof}
    Note that (iv) and (v) in Lemma~\ref{Lem:xi} can be unified to the following form 
    \begin{equation} \label{Equ:Unified-Relation}
      \xi_{i,g}\xi_{j,\sigma_i(g)}=\xi_{j,g}\xi_{i,\sigma_{j}(g)}, \, 
       F_i\cap F_{j}\neq \varnothing, 1\leq i \leq m, 
          n+1\leq j\leq m. 
     \end{equation}  
    \begin{lem} \label{Lem:ker-phi}
      Any element of $\ker(\phi)$ can be written as a product of $\xi_{j,g}$.
    \end{lem}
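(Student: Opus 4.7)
The plan is to prove the lemma by taking an arbitrary element of $\ker(\phi)$, expressing it as a word in the generators $s_{F_1},\dots, s_{F_m}$, and then converting it into a product of $\xi_{i,g}$'s by sliding a $\gamma$-factor from the left to the right through the word.

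The starting observation is that part (ii) of Lemma~\ref{Lem:xi} actually holds uniformly for all indices $1\leq i \leq m$, not just for $n+1\leq j \leq m$. For $1\leq i \leq n$ the generators $s_{F_1},\dots,s_{F_n}$ pairwise commute (they all correspond to facets meeting at $v$), they are involutions, and $\gamma_g$ is the product of those $s_{F_i}$'s whose index appears in the $e_i$-expansion of $g$. A direct check (two cases, according to whether the $i$-th coordinate of $g$ is $0$ or $1$) gives $\gamma_g s_{F_i} = \gamma_{\sigma_i(g)}$, hence $\xi_{i,g}= \gamma_g s_{F_i} \gamma_{\sigma_i(g)} = 1 \in W_P$, and in particular $\gamma_g s_{F_i} = \xi_{i,g}\gamma_{\sigma_i(g)}$ trivially. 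Combined with Lemma~\ref{Lem:xi}(ii), we obtain the uniform identity
\begin{equation} \label{Equ:slide}
\gamma_g s_{F_i} = \xi_{i,g}\,\gamma_{\sigma_i(g)},\qquad 1\leq i\leq m,\ g\in (\Z_2)^n.
\end{equation}

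Now take any $\omega\in\ker(\phi)\subset W_P$ and write it as a word $\omega = s_{F_{i_1}}s_{F_{i_2}}\cdots s_{F_{i_k}}$. I would insert $1=\gamma_0$ on the left and apply \eqref{Equ:slide} repeatedly, producing
\[
\omega \;=\; \gamma_0 \, s_{F_{i_1}}\cdots s_{F_{i_k}} \;=\; \xi_{i_1,g_1}\,\gamma_{g_2}\,s_{F_{i_2}}\cdots s_{F_{i_k}} \;=\; \cdots \;=\; \xi_{i_1,g_1}\,\xi_{i_2,g_2}\cdots\xi_{i_k,g_k}\,\gamma_{g_{k+1}},
\]
where $g_1=0$ and $g_{l+1}=\sigma_{i_l}(g_l)=g_l+\lambda(F_{i_l})$, so that
\[
g_{k+1} = \lambda(F_{i_1})+\cdots+\lambda(F_{i_k}) = \phi(\omega).
\]
Since $\omega \in \ker(\phi)$, we have $g_{k+1}=0$, hence $\gamma_{g_{k+1}}=\gamma_0=1$, and $\omega$ is expressed as a product of $\xi_{i,g}$'s. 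The factors with $1\leq i\leq n$ are all trivial, so $\omega$ is in fact a product of the $\xi_{j,g}$ with $n+1\leq j\leq m$.

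The argument is essentially a straight induction using \eqref{Equ:slide}, so no genuine obstacle appears; the only subtlety is confirming the extended version of Lemma~\ref{Lem:xi}(ii) for $i\leq n$, which rests on the commutativity of $s_{F_1},\dots,s_{F_n}$ at the chosen vertex $v$. Once \eqref{Equ:slide} is established, the rewriting is algorithmic and the hypothesis $\omega\in\ker(\phi)$ enters only in the very last step to kill the trailing $\gamma$-factor.
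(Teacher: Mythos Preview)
Your proof is correct and follows essentially the same approach as the paper: write an element of $\ker(\phi)$ as a word in the generators, then slide a $\gamma$-factor through from left to right using the identity $\gamma_g s_{F_i} = \xi_{i,g}\gamma_{\sigma_i(g)}$, and observe that the trailing $\gamma_{g_{k+1}}$ vanishes precisely because $\phi(\omega)=0$. The only cosmetic difference is that the paper first groups consecutive occurrences of $s_{F_1},\dots,s_{F_n}$ into $\gamma$-blocks (so the sliding identity is applied only for $j\geq n+1$), whereas you keep them as individual letters and extend the sliding identity uniformly to all $1\leq i\leq m$ using $\xi_{i,g}=1$ for $i\leq n$; the two bookkeepings are equivalent.
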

    \begin{proof}
     Any element $\xi$ of $W_P$ can be written in the form
      $$\xi = \gamma_{g_1} s_{F_{j_1}} \gamma_{g_2} s_{F_{j_2}}\cdots \gamma_{g_k} s_{F_{j_k}} \gamma_{g_{k+1}}$$
       where $g_i\in (\Z_2)^n$ and $ n+1\leq j_i \leq m$ for all $i=1,\cdots, k+1$. 
       Note that we allow $\gamma_{g_i}$ to 
       be the identity of $W_P$ in the above expression of $\xi$.
        Then from the left to right, 
        we can replace the $\gamma_{g}s_{F_j}$ in the expression of $\xi$
      via the relation in Lemma~\ref{Lem:xi}(ii)
       until there is no $s_{F_j}$ in the end.
      So we can write 
      $\xi = \xi_{j_1, h_{1}} \xi_{j_2,h_2}\cdots \xi_{j_k,h_k} \gamma_{h_{k+1}}$
      for some $h_1,\cdots, h_{k+1}\in (\Z_2)^n$.
    Since $\xi_{j,g} \in \ker(\phi)$, $\phi(\xi) =0$ if and only if 
     $\phi(\gamma_{h_{k+1}})= h_{k+1} = 0$, i.e. $\gamma_{h_{k+1}}=1\in W_P$. 
     This proves the lemma.     
    \end{proof}
    
    Lemma~\ref{Lem:ker-phi} tells us that $\{ \xi_{j,g} \,|\, n+1\leq j \leq m, 
      g\in (\Z_2)^n \}$ form a set of generators for $\ker(\phi)$.
      To understand the relations between these generators, we adopt some ideas
      originated from Poincar\'e's study on discrete group of motions. 
      The content of the following paragraph is taken from~\cite[Part II, Ch.2]{Vinberg}.
       \n

      Suppose $Q$ is a normal fundamental polyhedron of the action of 
      a discrete group $\Gamma$ on a simply connected $n$-manifold $X$.
      The polyhedron $\gamma Q$, $\gamma\in \Gamma$, are called \emph{chambers}.
      For any $(n-1)$-dimensional face $F$ of $Q$, we denote by $a_F$ the element of
      $\Gamma$ taking $Q$ to the polyhedron adjacent to $Q$ along the face $F$, and
      by $F^*$ the inverse image of the face of $F$ under the action of $a_F$.
      Clearly we have
         \begin{equation}\label{Equ:Paring-Relation}
           a_F\cdot a_{F^*} = e
          \end{equation} 
      The action by $a_F$ is called an \emph{adjacency transformation}.
      Relations between the adjacency transformations are of the form
      $a_{F_1}\cdots a_{F_k}=e$ which correspond to the \emph{cycles of chambers}
      $$ Q_0=Q, \ \ Q_1=a_{F_1}Q, \ \ Q_2=a_{F_1}a_{F_2}Q,\cdots, \ \
      Q_k = a_{F_1}\cdots a_{F_k}Q =Q,$$
      where $Q_i$ and $Q_{i-1}$ are adjacent along the face 
      $a_{F_1}\cdots a_{F_{i-1}} F_i$.\n
      
     $\bullet$ Each $(n-1)$-dimensional face $F$ of $Q$ defines a cycle of chambers
      $(Q,a_F Q, Q)$, which corresponds
        to the relation~\eqref{Equ:Paring-Relation}, called a \emph{pairing relation}.\n
        
      $\bullet$ Each $(n-2)$-dimensional face determines a cycle consisting of
       all chambers containing this face in the order in which they are encountered 
       while circling this face. The corresponding relation is called a
        \emph{Poincar\'e relation}.

       \begin{thm}[see~\cite{Vinberg}] \label{thm:Poincare}
      The group $\Gamma$ is generated by adjacency transformations $\{a_F\}$. Moreover,
       the paring relations together with 
       Poincar\'e relations form a complete set of
       relations of the adjacency transformations in $\Gamma$.
    \end{thm}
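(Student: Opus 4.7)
The plan is to prove both assertions by tracking how paths and loops in the simply connected space $X$ interact with the tiling $X = \bigcup_{\gamma \in \Gamma} \gamma Q$. The generation statement is essentially a path-connectedness argument, while the completeness of relations is a simple-connectedness argument, each sharpened by a transversality reduction to codimension $1$ and $2$.

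For the first statement, I would fix an interior point $p_0$ of $Q$ and, given $\gamma \in \Gamma$, choose a path from $p_0$ to $\gamma p_0$ in $X$. After a small perturbation this path can be put in general position so that it meets $\partial Q$-type walls only transversely and only through the relative interiors of $(n-1)$-dimensional faces of chambers. Each such crossing carries the path from a chamber $Q_{i-1} = a_{F_1}\cdots a_{F_{i-1}} Q$ into the adjacent chamber $Q_i = a_{F_1}\cdots a_{F_i} Q$, and the resulting product $a_{F_1}\cdots a_{F_k}$ must equal $\gamma$, because it sends $Q$ to $\gamma Q$ through the correct chain of adjacencies and $\Gamma$ permutes chambers freely.

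For the completeness of the two families of relations, suppose $a_{F_1}\cdots a_{F_k} = e$ in $\Gamma$. The associated cycle of chambers traces a loop $c$ in $X$, obtained by concatenating short segments through each $Q_i$ that cross the relevant walls. Since $X$ is simply connected, $c$ bounds a continuous disk $h: D^2 \to X$. Put $h$ in general position relative to the stratification of $X$ by codimension: interior points of $D^2$ should meet only the codimension-$1$ and codimension-$2$ strata, transversely. The preimage $h^{-1}(X^{(n-1)})$ then becomes an embedded graph $\mathcal{G} \subset D^2$ whose complementary regions each map into a single chamber and whose interior vertices correspond to codimension-$2$ crossings. Reading the edges around a small disk about such an interior vertex yields exactly a Poincar\'e relation; monogons or bigons of $\mathcal{G}$ that bound a collapsible configuration yield a pairing relation. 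By shrinking $\mathcal{G}$ inward from $\partial D^2$ and inducting on the number of regions, the boundary word $a_{F_1}\cdots a_{F_k}$ is rewritten as a product of conjugates of pairing and Poincar\'e relations, hence is trivial in the presentation by these relations.

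The hardest step will be the transversality reduction for the $2$-dimensional map $h: D^2 \to X$. I need to ensure that $h$ can be homotoped, rel boundary, so that its image avoids the codimension-$3$ skeleton entirely and meets each codimension-$2$ face in finitely many transverse points, and that the combinatorial recipe above actually yields a rewriting in the presentation generated by the two kinds of relations. This is technical because the local structure of $Q$ near a codimension-$2$ face can be combinatorially intricate. Once the transversality and the general-position disk are in hand, the remaining argument is essentially diagrammatic bookkeeping, and the two statements of the theorem follow simultaneously.
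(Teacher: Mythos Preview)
The paper does not supply its own proof of this theorem: it is quoted as a known result with a reference to Vinberg, and no argument is given in the text. Your sketch is correct and is essentially the classical proof one finds in the cited source (or, in more general form, in treatments of Poincar\'e's polyhedron theorem): generation comes from path-connectedness of $X$ read through a transversal crossing sequence of chambers, and completeness of relations comes from simple-connectedness of $X$ via a general-position disk whose preimage of the codimension-$1$ skeleton is a graph with interior vertices encoding Poincar\'e relations and bigons encoding pairing relations.

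One small point worth tightening: in the completeness step you should be explicit that the inductive collapse of the graph $\mathcal{G}$ corresponds, on the level of words, to a sequence of insertions/deletions of conjugates of pairing relators and Poincar\'e relators, so that the original word is expressed as a product of such conjugates in the free group on $\{a_F\}$; otherwise the ``diagrammatic bookkeeping'' remains a promise rather than an argument. You also need the (easy) observation that the chambers are in bijection with $\Gamma$ so that the word read off a transversal path is uniquely determined by its endpoint chamber, which you use to conclude $a_{F_1}\cdots a_{F_k}=\gamma$. With those points spelled out, your proof is complete and matches the standard treatment.
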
 \n
    
    \begin{lem} \label{Lem:Relation-Xi}
     The Lemma~\ref{Lem:xi}\,$\mathrm{(iii)(iv)(v)}$ give a complete 
     set of relations for the generators $\{ \xi_{j,g} \,|\, n+1\leq j \leq m, 
      g\in (\Z_2)^n \}$ in $\ker(\phi)$.
    \end{lem}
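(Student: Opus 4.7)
The plan is to apply Poincar\'e's theorem (Theorem~\ref{thm:Poincare}) to the free action of $\ker(\phi)$ on the simply connected manifold $\mathscr{L}_P$, with the fundamental polyhedron $Q_v$ constructed in Proposition~\ref{prop:Iso-Action}. Since Lemma~\ref{Lem:ker-phi} already gives that the $\xi_{j,g}$ generate $\ker(\phi)$, it remains to identify the relations (iii), (iv), (v) with the pairing relations and Poincar\'e relations at the codimension-one and codimension-two faces of $Q_v$.

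First I would identify the codimension-one faces of $Q_v$. Since $Q_v$ is $2^n$ copies of $P$ glued along $F_1,\ldots,F_n$ while $F_{n+1},\ldots,F_m$ are left free, each codimension-one face has the form $(F_j,g):=\{[(p,\gamma_g)]\,:\,p\in F_j\}$ with $n+1\leq j\leq m$ and $g\in(\Z_2)^n$. Crossing such a face, the adjacent $W_P$-copy of $P$ in $\mathscr{L}_P$ is $P_{\gamma_g s_{F_j}}$ (right multiplication by $s_{F_j}$, since the $F_j$-wall of $P_\omega$ is fixed by $\omega s_{F_j}\omega^{-1}$). Writing $\gamma_g s_{F_j}=\xi\gamma_{\sigma_j(g)}$ with $\xi\in\ker(\phi)$ gives $\xi=\gamma_g s_{F_j}\gamma_{\sigma_j(g)}=\xi_{j,g}$, so the adjacency transformation across $(F_j,g)$ is precisely $\xi_{j,g}$. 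A direct check using the equivalence $[(p,\gamma_g)]=[(p,s_{F_j}\gamma_g)]$ for $p\in F_j$ shows the paired face is $(F_j,g)^*=(F_j,\sigma_j(g))$, so the pairing relation~\eqref{Equ:Paring-Relation} reads $\xi_{j,g}\xi_{j,\sigma_j(g)}=1$, which is~(iii).

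The Poincar\'e relations arise from codimension-two faces of $Q_v$, which come from intersections $F\cap F'$ of two facets of $P$ (by simplicity). Two types contribute: (a) both facets external, $F=F_j,\,F'=F_{j'}$ with $n+1\leq j<j'\leq m$; and (b) one internal and one external, $F=F_i,\,F'=F_j$ with $1\leq i\leq n$ and $n+1\leq j\leq m$. (The doubly-internal case lies in the interior of $Q_v$ after gluing and contributes no relation.) In case~(a), at each level $g$ the cycle of $W_P$-chambers around $F_j\cap F_{j'}$ visits four distinct $\ker(\phi)$-chambers $Q_v,\,\xi_{j,g}Q_v,\,\xi_{j,g}\xi_{j',\sigma_j(g)}Q_v,\,\xi_{j',g}Q_v$ (using Lemma~\ref{Lem:xi}(ii) together with $s_{F_j}s_{F_{j'}}=s_{F_{j'}}s_{F_j}$); tracing the four wall-crossings yields a four-term relation which, after applying~(iii), rearranges to~(iv). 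In case~(b), the internal facet $F_i$ is glued inside $Q_v$, so $Q_v$ fills a $180^\circ$ half-neighborhood of $F_i\cap F_j$, with $P_{\gamma_g}$ and $P_{\gamma_{\sigma_i(g)}}$ forming its two $90^\circ$ wedges. The two codimension-one walls of $Q_v$ meeting this edge are $(F_j,g)$ and $(F_j,\sigma_i(g))$, and the two-term Poincar\'e relation around the cycle $Q_v\to\xi_{j,g}Q_v\to Q_v$, combined with~(iii), collapses to the identification $\xi_{j,g}=\xi_{j,\sigma_i(g)}$ of~(v).

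By Theorem~\ref{thm:Poincare} these relations are a complete set, which proves the lemma. The main obstacle I foresee is the careful combinatorial bookkeeping in case~(b): one must verify that the naive four-chamber Poincar\'e link of $F_i\cap F_j$ in $\mathscr{L}_P$ really collapses, after identifying the two pairs of wedges glued along $F_i$, into a two-chamber cycle of $\ker(\phi)$-chambers, and that the second shared wall, when pulled back to a face of $Q_v$ via Vinberg's formula $Q_1\cap Q_2=a_{F_1}\cdot F_2$, is indeed of the form $(F_j,\sigma_i(\sigma_j(g)))$, so that after applying the pairing relation~(iii) one recovers precisely the clean form $\xi_{j,g}=\xi_{j,\sigma_i(g)}$ of~(v).
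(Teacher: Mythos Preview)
Your proposal is correct and follows essentially the same strategy as the paper: apply Theorem~\ref{thm:Poincare} to the $\ker(\phi)$-action on $\mathscr{L}_P$ with fundamental domain $Q_v$, identify the adjacency transformation across $(F_j,g)$ as $\xi_{j,g}$, and read off the pairing and Poincar\'e relations. The only difference is cosmetic: the paper treats the codimension-two faces uniformly by extending the notation $F_{i,g}$ to $1\le i\le n$ (internal walls), viewing the link as always having four $W_P$-chambers and invoking the unified relation~\eqref{Equ:Unified-Relation} together with $\xi_{i,g}=1$ for $i\le n$, whereas you split into cases (a) and (b) and carefully count the $\ker(\phi)$-chambers (four and two, respectively). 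Your anticipated obstacle in case~(b) is handled exactly as you outline: the pulled-back second wall is $(F_j,\sigma_i(\sigma_j(g)))$, and the resulting two-term relation $\xi_{j,g}\xi_{j,\sigma_i(\sigma_j(g))}=1$ together with (iii) yields~(v).
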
  
    \begin{proof}
     According to the proof of Proposition~\ref{prop:Iso-Action}, we can take the
      fundamental domain of the action of $\ker(\phi)$ on $\mathscr{L}_P$
     to be $Q_v $ (see~\eqref{Equ:Q_v}) 
     where $v=F_1\cap\cdots\cap F_n$.
    For any $g\in(\Z_2)^n$, let $F_{j,g} \subset \partial Q_v$ 
    denote the image of the 
    copy of $F_j$ in $(P,\gamma_g)$ for each $n+1 \leq j \leq m$.
     It is easy to see that $\xi_{j,g}$ maps $F_{j,\sigma_j(g)}$ to $F_{j,g}$. So
      the adjacency transformation defined by the facet
      $F_{j,g}$ of $Q_v$ is exactly $\xi_{j,g}$. Moreover, we have:
  
   \begin{itemize}
    \item The pairing relations for $\{\xi_{j,g}\}$
                  are exactly given in Lemma~\ref{Lem:xi}(iii) since
                   by definition $\xi_{j,g}$ maps $F_{j,\sigma_j(g)}$ to $F_{j,g}$
                   for any $g\in (\Z_2)^n$.\n
     
    \item The Poincar\'e relations for $\{\xi_{j,g}\}$ 
                 are exactly given by Lemma~\ref{Lem:xi}(iv)(v) (or equivalent the relations
                 in~\eqref{Equ:Unified-Relation}). This is because
                  for any $g\in (\Z_2)^n$ 
                  there are exactly four chambers of $\mathscr{L}_P$ around
                  the face $F_{i,g}\cap F_{j,g}\subset Q_v$ and we will meet
                  $F_{i,g}, F_{j,\sigma_i(g)}, F_{i,\sigma_{j}(g)}, F_{j,g}$
                  in order when circling around $F_{i,g}\cap F_{j,g}$
                   (see Figure~\ref{p:Domain}). 
                  
                   \n
    \end{itemize} 
    Then our lemma follows immediately from Theorem~\ref{thm:Poincare}.
    \end{proof}
   
 \begin{figure}
         \includegraphics[width=0.4\textwidth]{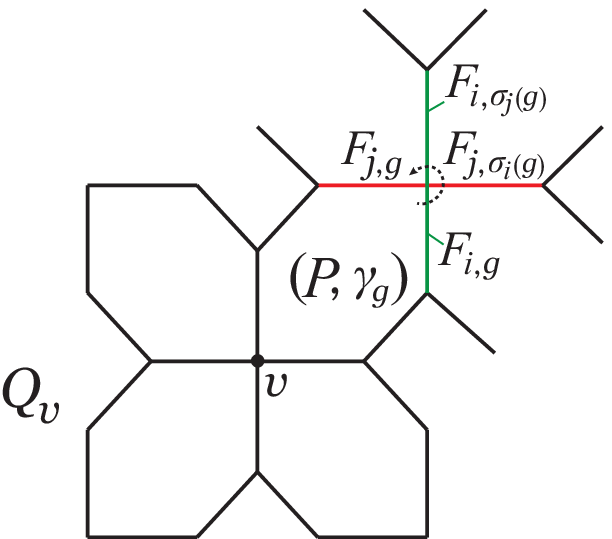}\\
          \caption{}\label{p:Domain}
      \end{figure}

    Now define a homomorphism $\Psi: \pi_1(M,v) \rightarrow W_P$ by
    \begin{equation} \label{Equ:alpha}
       \Psi(\beta_{j,g}) = \xi_{j,g},\ n+1\leq j \leq m, \forall g\in (\Z_2)^n.
     \end{equation}  
    By the presentation of $\pi_1(M,v)$ given in~\eqref{Relation-2-Sim} 
    and Lemma~\ref{Lem:Relation-Xi}, $\Psi$ is well defined and is an isomorphism
    from $\pi_1(M,v)$ to $\ker(\phi)$. So we obtain the following.\n
    
    \begin{lem} \label{Lem:Psi-Sequence}
     For any $n$-dimensional
      small cover $M$ over a simple polytope $P$, we have a short exact sequence 
      $1\longrightarrow \pi_1(M,v) \overset{\Psi}{\longrightarrow}
         W_P \overset{\phi}{\longrightarrow} (\Z_2)^n \longrightarrow 1$. 
    \end{lem}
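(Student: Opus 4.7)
The plan is to observe that Lemma~\ref{Lem:Psi-Sequence} is essentially a packaging of the three preceding lemmas, combined with the known exact sequence~\eqref{Equ:Fund-Group}. First I would verify that the assignment $\Psi(\beta_{j,g})=\xi_{j,g}$ extends to a well-defined group homomorphism from $\pi_1(M,v)$ to $W_P$. For this, I take the simplified presentation of $\pi_1(M,v)$ in~\eqref{Relation-2-Sim} and check, relation by relation, that each defining relation in $\beta_{j,g}$ is sent to a relation that already holds among the $\xi_{j,g}$'s in $W_P$: the squaring relation comes from Lemma~\ref{Lem:xi}(iii), the commutation relation from Lemma~\ref{Lem:xi}(iv), and the equality $\beta_{j,g}=\beta_{j,\sigma_i(g)}$ (for $i\le n$) from Lemma~\ref{Lem:xi}(v). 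Hence $\Psi$ is well defined.

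Next I would show that $\mathrm{im}(\Psi)=\ker(\phi)$. By Lemma~\ref{Lem:xi}(i), each $\xi_{j,g}$ lies in $\ker(\phi)$, so $\mathrm{im}(\Psi)\subset\ker(\phi)$. For the reverse inclusion I invoke Lemma~\ref{Lem:ker-phi}, which says every element of $\ker(\phi)$ can be written as a product of the $\xi_{j,g}$'s; this is precisely the image of a word in the $\beta_{j,g}$'s under $\Psi$. Since $\phi$ is evidently surjective (the values $\lambda(F_1)=e_1,\dots,\lambda(F_n)=e_n$ already generate $(\Z_2)^n$), this settles exactness at $W_P$ and at $(\Z_2)^n$.

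Finally I would establish injectivity of $\Psi$. Suppose a word $w$ in the generators $\beta_{j,g}$ satisfies $\Psi(w)=1$ in $W_P$. Then the corresponding word in the $\xi_{j,g}$'s equals the identity in $\ker(\phi)$, so by Lemma~\ref{Lem:Relation-Xi} it must be reducible to the identity using only the relations of Lemma~\ref{Lem:xi}(iii)(iv)(v). But these are exactly (the images under $\Psi$ of) the defining relations of $\pi_1(M,v)$ recorded in~\eqref{Relation-2-Sim}, so the same reduction shows $w=1$ in $\pi_1(M,v)$. Hence $\Psi$ is injective, giving exactness at $\pi_1(M,v)$.

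There is no real obstacle here, since all the hard work has been absorbed into the preceding three lemmas; the only care needed is a careful bookkeeping check that the three families of relations in~\eqref{Relation-2-Sim} for $\pi_1(M,v)$ correspond bijectively to the three families of relations for $\{\xi_{j,g}\}$ listed in Lemma~\ref{Lem:xi}(iii)(iv)(v), so that Lemma~\ref{Lem:Relation-Xi} yields injectivity with no hidden extra relation on either side. Once that correspondence is recorded explicitly, the short exact sequence follows at once, and in particular $\Psi$ is an alternative monomorphism $\pi_1(M)\hookrightarrow W_P$ whose image is the same $\ker(\phi)$ appearing in~\eqref{Equ:Fund-Group}.
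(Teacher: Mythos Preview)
Your proposal is correct and follows essentially the same approach as the paper: the paper observes (in the paragraph immediately preceding the lemma) that the presentation~\eqref{Relation-2-Sim} of $\pi_1(M,v)$ and the presentation of $\ker(\phi)$ furnished by Lemma~\ref{Lem:Relation-Xi} have identical generators and relations, so $\Psi$ is a well-defined isomorphism onto $\ker(\phi)$. Your breakdown into well-definedness (via Lemma~\ref{Lem:xi}(iii)(iv)(v)), surjectivity onto $\ker(\phi)$ (via Lemma~\ref{Lem:ker-phi}), and injectivity (via Lemma~\ref{Lem:Relation-Xi}) is just a more explicit unpacking of that same observation.
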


   \begin{rem}
   The relation between $\Psi$ and $\psi$ (see~\eqref{Equ:Fund-Group}) is not very
    clear to us. The homomorphism $\Psi$ is defined via the presentation of $\pi_1(M)$ 
       while $\psi$ is induced by the map 
       $M \hookrightarrow M\times E(\Z_2)^n \rightarrow
         M_{(\Z_2)^n}$. But the
       isomorphism between $\pi_1(M_{(\Z_2)^n})$ and $W_P$ is not very explicit.
   \end{rem}
   
   Note that the sequence $1\longrightarrow \pi_1(M,v) \overset{\Psi}{\longrightarrow}
         W_P \overset{\phi}{\longrightarrow} (\Z_2)^n \longrightarrow 1$ splits since
         $\phi\circ \gamma = \mathrm{id}_{(\Z_2)^n}$ (see~\eqref{Equ:gamma-property}).
          Then $W_P \cong \pi_1(M,v)\rtimes (\Z_2)^n$ where
          $(\Z_2)^n$ acts on $\pi_1(M,v)$ by:
          $ g'\cdot \xi_{j,g} = \gamma^{-1}_{g'}\xi_{j,g}\gamma_{g'}=
          \xi_{j,g+g'} $ for any $g,g'\in (\Z_2)^n$ (see~\eqref{Equ:xi-def}).
          In other words, $(\Z_2)^n$ permutates the $2^n$ generators associated
          to each facet $F_j$ in the presentation~\eqref{Relation-2-Sim} 
          of $\pi_1(M,v)$.\nn
          
       \subsection{Torsion in the fundamental groups of small covers} 
       \label{subsec:Right-Coxeter}\ \n
       
      Given a finite simplicial complex $K$ with vertex set $\{1,\ldots, m\}$,
       there is a right-angled Coxeter group $W_K$ with generators  
       $s_1,\cdots, s_m$ and relations $s^2_i=1$, $1\leq i \leq m$
       and $(s_is_j)^2=1$ for each $1$-simplex $\{i,j\}$ in $K$.
       It is clear that 
       $$W_K = W_{K^{(1)}}$$
        where $K^{(1)}$ is the $1$-skeleton
       of $K$.        
        It is shown in~\cite[Sec 1.2]{Davis08} that 
       there is a finite cubical complex $\R \mathcal{Z}_K$ determined by $K$ 
       whose fundamental group is isomorphic to the commutator subgroup 
       $[W_K,W_K]$ of $W_K$. We call $\R \mathcal{Z}_K$
        the \emph{real moment-angle complex} of $K$. In some
        literature, $\R \mathcal{Z}_K$ is also denoted by $(D^1,S^0)^K$. 
         
         \begin{defi}[Real Moment-Angle Manifold] \label{Defi:Real-MAM}
           For an $n$-dimensional simple polytope $P$, $\R \mathcal{Z}_{\partial P^*}$ is a closed 
           connected $n$-manifold called the 
         \emph{real moment-angle manifold} of $P$, also denoted by $\R \mathcal{Z}_P$.
         The Coxeter group $W_{\partial P^*}$ clearly coincides with $W_P$ defined earlier. So $\pi_1(\R \mathcal{Z}_P) \cong [W_P,W_P]$. 
          Let the set of facets of $P$ be
    $\{ F_1,\cdots, F_m \}$. We can also obtain $\R \mathcal{Z}_P$ by gluing $2^m$ copies of $P$ via a function
         $\mu: \{ F_1,\cdots, F_m \}\rightarrow (\Z_2)^m$ 
          in the same way as~\eqref{Equ:Quo-SC} where $\{\mu(F_i)=e_i, 1\leq i \leq m\}$ forms a basis of $(\Z_2)^m$ (see~\cite[\S 4.1]{DaJan91} or~\cite{KurMasYu15}). 
      Let $$\Theta: P\times (\Z_2)^m \rightarrow \R \mathcal{Z}_P$$
       be the quotient map.
         There is a \emph{canonical $(\Z_2)^m$-action}
          on $\R \mathcal{Z}_P$ defined by
          \[  g'\cdot \Theta(x,g) = \Theta(x,g+g'),\ 
           x\in P^n, g,g'\in (\Z_2)^m,\]
 whose orbit space is $P$.  
 Let $\pi_P: \R \mathcal{Z}_P \rightarrow P$
         be the projection. For any proper face $f$ of $P$, it is easy to see that
          $\pi^{-1}_P(f)$ consists of
         $2^{m+\dim(f)-n-l}$ copies of $\R\mathcal{Z}_f$ where $l$ is the number of facets of $f$.
          Note that $\R \mathcal{Z}_P$ is always a closed connected orientable manifold.\n
            If there is a small cover $M$ over $P$, then there exists a subgroup 
          $H\cong (\Z_2)^{m-n}$ of $(\Z_2)^{m}$ where $H$ acts freely
           on $\R \mathcal{Z}_P$ (through the canonical action)
         whose oribt space is $M$. In other words, $\R \mathcal{Z}_P$ is a regular 
         $(\Z_2)^{m-n}$-covering space of $M$. \n
         \end{defi} 
        
       We call $K$ a \emph{flag complex} if any finite 
        set of vertices of $K$, 
        which are pairwise connected by edges, spans a simplex in $K$.
        Suggestively we think of a non-flag complex as having a minimal
         empty simplex of some dimension greater than $1$, i.e., a subcomplex equivalent to the boundary of a $k$-simplex that does not actually span a $k$-simplex.\n
        
         By~\cite[Proposition 1.2.3]{Davis08},
       $\R \mathcal{Z}_K$ is aspherical if and only if
         $K$ is a flag complex. So when $K$ is a flag complex, $\pi_1(\R \mathcal{Z}_K)$ is torsion-free.         
         For any finite simplicial complex $K$, the minimal
         flag simplicial complex that contains $K$ is called
         the \emph{flagification} of $K$, denoted by $\mathrm{fla}(K)$.
         Note that $K$ and $\mathrm{fla}(K)$ have the same $1$-skeleton. 
         So we have
        \begin{align*}
         [W_K,W_K] = [W_{K^{(1)}}, W_{K^{(1)}}] & =
          [W_{\mathrm{fla}(K)^{(1)}}, W_{\mathrm{fla}(K)^{(1)}}] \\
           & = [W_{\mathrm{fla}(K)},W_{\mathrm{fla}(K)}] \cong \pi_1(\R \mathcal{Z}_{\mathrm{fla}(K)}).
           \end{align*}
     This implies that $[W_K, W_K]$ is torsion-free for any finite simplicial complex $K$. Then since 
           $W_K\slash [W_K,W_K] \cong (\Z_2)^m$, it is easy to see that
          any torsion element of $W_K$ must have order $2$. 
          Then by~\eqref{Equ:Fund-Group}, we obtain the following.

     \begin{prop}
      Any element of the fundamental group of a small cover either has infinite order
       or has order $2$.
     \end{prop}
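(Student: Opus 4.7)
The plan is to reduce the statement to the two facts already established in the preceding paragraph, namely that the commutator subgroup $[W_P,W_P]$ is torsion-free (since it is isomorphic to $\pi_1(\R\mathcal{Z}_{\mathrm{fla}(\partial P^*)})$, the fundamental group of an aspherical real moment-angle manifold) and that the abelianization $W_P/[W_P,W_P]$ is isomorphic to $(\Z_2)^m$, where $m$ is the number of facets of $P$. By the short exact sequence~\eqref{Equ:Fund-Group} (or equivalently Lemma~\ref{Lem:Psi-Sequence}), $\pi_1(M)$ embeds as a subgroup of $W_P$, so it suffices to show that every torsion element of $W_P$ has order at most $2$.

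Concretely, suppose $\omega \in W_P$ satisfies $\omega^k = 1$ for some $k \geq 1$. Let $\bar{\omega}$ denote the image of $\omega$ in $(\Z_2)^m \cong W_P/[W_P,W_P]$. There are two cases. If $\bar{\omega} = 0$, then $\omega \in [W_P,W_P]$, and since $[W_P,W_P]$ is torsion-free we conclude $\omega = 1$. If $\bar{\omega} \neq 0$, then since $(\Z_2)^m$ has exponent $2$ we have $\bar{\omega}^2 = 0$, so $\omega^2 \in [W_P,W_P]$. But $\omega^2$ still has finite order (dividing $k$), so again by torsion-freeness of $[W_P,W_P]$ we obtain $\omega^2 = 1$. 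Pulling back through the monomorphism $\pi_1(M) \hookrightarrow W_P$ gives the desired dichotomy for elements of $\pi_1(M)$.

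I do not anticipate any substantial obstacle: all the nontrivial input has already been assembled in the subsection, and the remaining argument is a routine three-line chase in a short exact sequence of groups. The only point to double-check is that the identification $W_P/[W_P,W_P] \cong (\Z_2)^m$ follows immediately from the presentation of $W_P$ (every generator $s_F$ is an involution and all relations become trivial upon abelianization modulo the relations $s_F^2=1$), which is standard.
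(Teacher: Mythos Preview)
Your proposal is correct and follows essentially the same route as the paper: the paper also deduces the result from the two facts that $[W_P,W_P]$ is torsion-free and $W_P/[W_P,W_P]\cong(\Z_2)^m$, concluding that every torsion element of $W_P$ satisfies $\omega^2=1$, and then invokes the embedding $\pi_1(M)\hookrightarrow W_P$ from~\eqref{Equ:Fund-Group}. The only minor slip is that $\R\mathcal{Z}_{\mathrm{fla}(\partial P^*)}$ is in general a real moment-angle \emph{complex} rather than a manifold, but this does not affect the argument.
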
  
    
    Note that right-angled Coxeter groups are special instances of a more general construction called
    \emph{graph products of groups}.
    The reader is referred to~\cite{PanVer16} for the study of the commutator subgroup of
    a general graph product of groups. 
   \vskip .6cm

   \section{$\pi_1$-injectivity of the 
             facial submanifolds of small covers} \label{Sec:Facial-SubMfd}
   
    Let $M$ be a small cover over a simple polytope $P$ with characteristic function $\lambda$.
     Given a proper face
      $f$ of $P$, we choose a vertex $v$ of $P$ contained in $f$. 
        Let all the facets of $P$ be $F_1,\cdots, F_m$ where 
        \begin{itemize}
          \item $F_1\cap \cdots\cap F_n=v$, $\lambda(F_i) =e_i$, $1\leq i \leq n$,
            $e_1,\cdots, e_n$ is a basis of $(\Z_2)^n$;\n
            
          \item $F_{1}\cap \cdots \cap F_{n-k} =f$, $\dim(f)=k$. So 
          $G_f=\langle e_1,\cdots, e_{n-k} \rangle \subset (\Z_2)^n$.\n
          \item $\mathcal{F}(f^{\perp}) = \{ F_{n-k+1}, \cdots, F_{n},\cdots, F_{n+r} \}$, $r\leq m-n$.
          So the codimension-one faces of $f$ are $\{ f\cap F_{n-k+1}, \cdots, f\cap F_{n},\cdots,
          f\cap F_{n+r}\}$ where the faces incident to $v$ are
          $f\cap F_{n-k+1}, \cdots, f\cap F_{n}$.
        \end{itemize}  
        
       The facial submanifold $M_f$
          is a small cover over the simple polytope $f$ whose characteristic function 
          $\lambda_f$ is given by~\eqref{Equ:Quotient-Color}. 
          Note that $\{ \lambda_f(f\cap F_i)\,|\, n-k+1\leq i \leq n \} $ is a basis of 
          $(\Z_2)^{\dim(f)}$. Then we can identify
          $(\Z_2)^{\dim(f)} \cong (\Z_2)^n\slash G_f$ with the subgroup 
          $ \langle e_{n-k+1},\cdots, e_n \rangle \subset (\Z_2)^n$ via a monomorphism
          \begin{align*} 
          \qquad\qquad \iota: (\Z_2)^{\dim(f)} &\longrightarrow (\Z_2)^n \\
                         \lambda_f(f\cap F_i) &\longmapsto  \lambda(F_i) =e_i,\ n-k+1 \leq i \leq n.
           \end{align*}
        We will always assume this identification in the rest of this section.
         Under this identification, we can write the function $\lambda_f$ as 
          $$\lambda_f : 
           \{ f\cap F_{n-k+1}, \cdots, f\cap F_{n},\cdots,
          f\cap F_{n+r}\} \rightarrow (\Z_2)^{\dim(f)} =\langle e_{n-k+1},\cdots, e_n \rangle \subset (\Z_2)^n$$
            where $\lambda_f(f\cap F_i) = e_i, n-k+1 \leq i \leq n$.
          Notice that $\lambda_f(f\cap F_j)$ and $\lambda(F_j)$ are not necessarily equal
           when $n+1\leq j \leq n+r$. But we have
          \begin{equation}
            \lambda_f(f\cap F_j) - \lambda(F_j) \in G_f,\, n+1\leq j \leq n+r.
          \end{equation}   
          
      In addition, parallelly to the transformation $\sigma_i: (\Z_2)^n\rightarrow (\Z_2)^n$
        in~\eqref{Equ:Sigma_i}, we define a set of
          transformations $\sigma^f_i : (\Z_2)^{\dim(f)} \rightarrow (\Z_2)^{\dim(f)}$ by
      \begin{equation}
         \sigma^f_i(g) = g+\lambda_f(f\cap F_i), \ g\in (\Z_2)^{\dim(f)},
         \ n-k+1\leq i \leq n+r. \
      \end{equation}
      Then we have
      \begin{equation} \label{Equ:tau-sigma}
              \sigma^f_i(g) - \sigma_i(g) =
              \lambda_f(f\cap F_i) - \lambda(F_i)  
              = \begin{cases}
                      0,   &   \text{ $n-k+1\leq i \leq n$; } \\
                       \in G_f    &   \text{ $n+1\leq i \leq n+r$.} \\
               \end{cases}
      \end{equation}

      By the presentation of $\pi_1(M,v)$ in Proposition~\ref{Prop:Presentation},
     we can similarly obtain a simplified presentation of $\pi_1(M_f,v)$.
     Note that the codimension-one faces of $f$ that are not incident to $v$ are $f\cap F_{n+1},\cdots,
     f\cap F_{n+r}$. We let $\beta^f_{j,g}$ denote the closed path of $M_f$ based at $v$ corresponding to
     $f\cap F_{j}$ for each $n+1\leq j\leq n+r$. Then according 
     to~\eqref{Sim-Presentation}, we have a presentation of $\pi_1(M_f,v)$:
    \begin{align}
    \pi_1(M_f,v) = \big{\langle}  \beta^f_{j,g}, &\,  n+1\leq j \leq n+r, g\in (\Z_2)^{\dim(f)} \, |\,
      \notag \\
       &  \beta^f_{j,g}\beta^f_{j,\sigma^f_j(g)}=1, n+1\leq j \leq n+r,\, \forall g\in (\Z_2)^{\dim(f)}; 
          \notag \\
       & \beta^f_{j,g}\beta^f_{j',\sigma^f_j(g)}=\beta^f_{j',g}\beta^f_{j,\sigma^f_{j'}(g)},\,
       \forall g\in (\Z_2)^{\dim(f)} \ \text{where} \notag  \\
             & \ F_j\cap F_{j'} \cap f \neq \varnothing, 
        n+1\leq j < j' \leq n+r; \notag \\
        & \beta^f_{j,g}= \beta^f_{j,\sigma^f_i(g)}, \, \forall g\in (\Z_2)^{\dim(f)}\ \text{where}\notag \\
        &\ n-k+1\leq i\leq n, 
                 n+1\leq j \leq n+r 
           \big{\rangle}. 
\end{align}

     \begin{figure}
         \includegraphics[width=0.32\textwidth]{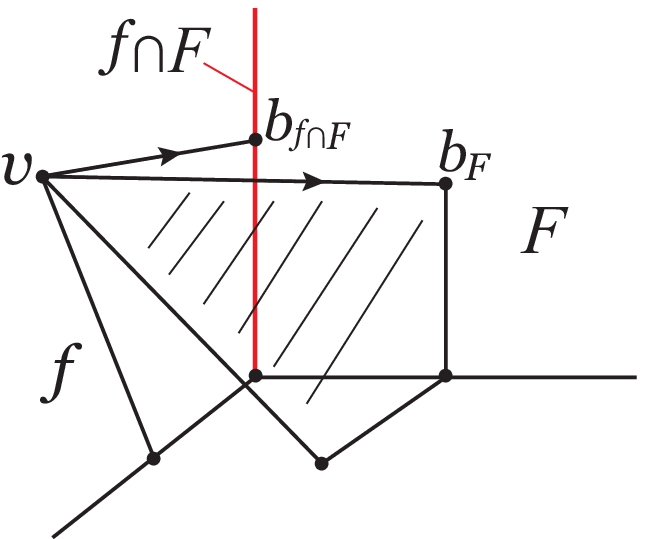}\\
          \caption{}\label{p:Inclusion}
      \end{figure} 
       
      \begin{lem} \label{Lem:j_f_star}
           The inclusion map $j_f : M_f \rightarrow M$ induces a homomorphism
           \begin{align} \label{Equ:j_f-star}
             (j_f)_* :\pi_1(M_f,v) & \longrightarrow \pi_1(M,v) \\ 
                   \beta^f_{j,g} \ &\longmapsto \ \beta_{j, g} ,  \ \forall g\in (\Z_2)^{\dim(f)}
                   \notag
            \end{align} 
      \end{lem}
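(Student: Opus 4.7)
The plan is to prove the lemma in two stages: first, check that the assignment $\beta^f_{j,g}\mapsto\beta_{j,g}$ respects all the defining relations of $\pi_1(M_f,v)$ and therefore extends to a well-defined group homomorphism; second, verify that this homomorphism actually agrees with the map induced on $\pi_1$ by the inclusion $j_f$, by exhibiting an explicit homotopy in $M$ between $j_f(\beta^f_{j,g})$ and $\beta_{j,g}$.

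For the first stage, the main combinatorial input is \eqref{Equ:tau-sigma}, which says $\sigma^f_i(g)-\sigma_i(g)\in G_f$ for $n+1\le i\le n+r$ and that it vanishes for $n-k+1\le i\le n$. The key preliminary observation is a $G_f$-invariance of the generators in $\pi_1(M,v)$: for any $F_j\in\mathcal{F}(f^\perp)$ the element $\beta_{j,h}$ depends only on the coset $h+G_f$. This is an immediate consequence of \eqref{Relation-3}, because $G_f=\langle\lambda(F_1),\ldots,\lambda(F_{n-k})\rangle$, each $F_i$ for $1\le i\le n-k$ contains $v$, and $F_i\cap F_j\supset f\cap F_j\neq\varnothing$ since $F_j\in\mathcal{F}(f^\perp)$. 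Armed with this, each family of relations of $\pi_1(M_f,v)$ transfers to a valid identity in $\pi_1(M,v)$: the pairing relation $\beta^f_{j,g}\beta^f_{j,\sigma^f_j(g)}=1$ becomes $\beta_{j,g}\beta_{j,\sigma_j(g)}=1$ after absorbing the $G_f$-error through the invariance; the commuting relation between two facets of $\mathcal{F}(f^\perp)$ transfers because $F_j\cap F_{j'}\cap f\ne\varnothing$ implies $F_j\cap F_{j'}\ne\varnothing$; and the collapsing relation for $n-k+1\le i\le n$ transfers since $\sigma^f_i=\sigma_i$ on this range and $F_i\cap F_j\supset f\cap F_j\ne\varnothing$ triggers the corresponding relation of $\pi_1(M,v)$.

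For the second stage, I compare the loops geometrically. The generator $\beta^f_{j,g}$ traverses $\overrightarrow{v b_{f\cap F_j}}$ in the $g$-copy of $f$ inside $M_f$ and returns in the $\sigma^f_j(g)$-copy; its image under $j_f$ in $M$ sits in the $\iota(g)$- and $\iota(\sigma^f_j(g))$-copies of $P$ via the fixed section $\iota$. On the other hand, $\beta_{j,g}$ (with $g\in\iota((\Z_2)^{\dim f})$) uses $\overrightarrow{v b_{F_j}}$ in the $g$- and $\sigma_j(g)$-copies of $P$. Since $\lambda(F_j)-\iota(\lambda_f(f\cap F_j))\in G_f$, the second-copy labels match modulo the $G_f$-invariance already established, so the two loops land in the same pair of $(\Z_2)^n$-copies up to a relation in $\pi_1(M,v)$. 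A straight-line homotopy inside $F_j\subset P$ sliding $b_{f\cap F_j}$ to $b_{F_j}$ then lifts to $M$ (in both relevant copies) and deforms $j_f(\beta^f_{j,g})$ to $\beta_{j,g}$.

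The hard part is the label bookkeeping under $\iota$: because $\lambda_f(f\cap F_j)$ and $\lambda(F_j)$ agree only modulo $G_f$ when $j\ge n+1$, the formula $\beta^f_{j,g}\mapsto\beta_{j,g}$ is only unambiguous once one knows $\beta_{j,\cdot}$ is $G_f$-invariant. Both the algebraic well-definedness and the geometric homotopy hinge on the single combinatorial fact that every $F_i$ with $1\le i\le n-k$ intersects $F_j$, which is automatic from $F_j\in\mathcal{F}(f^\perp)$.
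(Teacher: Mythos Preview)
Your argument is correct, but it does considerably more work than needed, and in the wrong order. The paper's proof is purely your second stage: since $(j_f)_*$ is induced by the continuous inclusion $j_f$, it is \emph{automatically} a group homomorphism, so there is nothing to check about relations being preserved. One only has to compute $(j_f)_*(\beta^f_{j,g})$ on each generator, and this is done by deforming the segment $\overrightarrow{v\,b_{f\cap F_j}}$ to $\overrightarrow{v\,b_{F_j}}$ inside the convex polytope $P$ with the endpoint $v$ fixed, simultaneously in each $(\Z_2)^n$-copy; the moving endpoint stays in $F_j$ throughout, so the two legs continue to glue into a loop. Your entire first stage is therefore redundant.

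Your concern about ``label bookkeeping under $\iota$'' is also overstated. The second leg of $j_f(\beta^f_{j,g})$ lives in the copy labelled $\iota(\sigma^f_j(g))$, which differs from $\iota(g)+\lambda(F_j)$ by an element of $G_f$. But the segment $\overrightarrow{v\,b_{f\cap F_j}}$ lies entirely in $f$, and on $f$ the quotient relation $\sim$ defining $M$ already identifies copies that differ by $G_f$ \emph{pointwise}. So the two descriptions of the second leg are literally the same path in $M$, not merely homotopic; no appeal to the $\pi_1$-relation~\eqref{Relation-3} is needed here. (That relation is genuinely needed later, in the proof of Theorem~\ref{thm-Fund}, where the paths no longer stay inside $f$.)
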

      \begin{proof}
       By the definition of $\mathcal{D}_v(M)$, it is easy to see that
       the restriction of $\mathcal{D}_v(M)$ to $M_f$ 
       coincides with $\mathcal{D}_v(M_f)$. 
       Since $P$ is convex, we can deform the oriented line segment 
       $\overrightarrow{vb_{f\cap F}}$ to $\overrightarrow{vb_F}$ in $P$ with the endpoint $v$ fixed 
       (see Figure~\ref{p:Inclusion}).
       If we do this deformation in $\Theta((P,g))$ for all $g\in (\Z_2)^{\dim(f)}$ simultaneously
        in $M$, 
       we obtain a homotopy from the closed
        path $\beta^f_{j,g} \subset M_f$ to the closed path $\beta_{j,g} \subset M$
       which fixes the base point $v$
       for any $g\in (\Z_2)^{\dim(f)}$. This proves the lemma.
      \end{proof}

      We have the following constructions for any proper face $f$ of $P$.
          \begin{itemize}
           \item The inclusion map $i_f: f \rightarrow P$ induces a homomorphism
          $(i_f)_* : W_f \rightarrow W_P$ which sends the generator 
          $s_{f\cap F_i}$ of $W_f$
          to the generator $s_{F_i}$ of $W_P$ for any facet
           $F_i \in \mathcal{F}(f^{\perp})$.\n

          \item From the monomorphism $\gamma :(\Z_2)^n \rightarrow W_P$ defined in~\eqref{Equ:gamma},
            we obtain a monomorphism $\Psi: \pi_1(M,v)\rightarrow W_P$ in~\eqref{Equ:alpha}.
             For the face $f$, we similarly define a monomorphism
             $\gamma^f: (\Z_2)^{\dim(f)} \rightarrow W_f$ 
             by sending
             $\lambda_f(f\cap F_i) =e_i$ to $s_{f\cap F_i}$ for any $n-k+1\leq i \leq n$.
             Let $\gamma^f_g = \gamma^f(g)$ for any $g\in (\Z_2)^{\dim(f)}$. 
             Then since we identify $(\Z_2)^{\dim(f)}$ with 
             $\langle e_{n-k+1},\cdots, e_n \rangle \subset (\Z_2)^n$, we have
             \begin{equation} \label{Equ:gamma-tau}
                (i_f)_* ( \gamma^f_g ) =\gamma_{\iota(g)} =\gamma_g,
                 \forall g\in (\Z_2)^{\dim(f)}. 
              \end{equation}  
             So by Lemma~\ref{Lem:Psi-Sequence},
             we have a monomorphism $\Psi_f: \pi_1(M_f,v)\rightarrow W_f$ where
             \begin{equation} \label{Equ:Psi-f}
                \Psi_f(\beta^f_{j,g}) = \xi^f_{j,g} := \gamma^f_g s_{f\cap F_j}
                 \gamma^f_{\sigma^f_j(g)}. 
              \end{equation}  
             
           \item   Let $H_f$ be the normal subgroup of $W_P$ generated by the following set
    $$\{ s_F \,|\, f\subset F\} = \{ s_{F_1},\cdots, s_{F_{n-k}}\}.$$
     And let  $\eta_f: W_P \rightarrow  W_{P}\slash H_f$ be the projection. Then by definition,
     $\gamma$ maps $G_f \subset (\Z_2)^n$ isomorphically onto $H_f\subset W_P$.\n
     
     \item
    Let $\overline{\Psi} = \eta_f\circ \Psi$ and
     $\overline{(i_f)}_* = \eta_f\circ (i_f)_*$ be the compositions of $\Psi$ and 
     $(i_f)_*$ with $\eta_f$.    \n

       \end{itemize}

   \begin{lem} \label{Lem:Commute-Diagram}
     The following diagram is commutative.
     \begin{equation}\label{Equ:Commute-Diagram}
        \xymatrix{
       \pi_1(M_f,v) \ar[r]^{\ \ \Psi_f} \ar[d]^{(j_f)_*} & W_f 
      \ar[d]^{\overline{(i_f)}_*}  \\
       \pi_1(M,v)  \ar[r]^{\overline{\Psi}} & W_{P}\slash H_f    
      } 
      \end{equation}
   \end{lem}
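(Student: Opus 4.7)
The plan is to verify commutativity on the generators $\beta^f_{j,g}$ (with $n+1\le j\le n+r$ and $g\in(\Z_2)^{\dim(f)}$) of $\pi_1(M_f,v)$, using the explicit formulas available for each of the four arrows. Since all four maps are homomorphisms, commutation on generators suffices.

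First I would chase a typical generator $\beta^f_{j,g}$ along the top-right route. By~\eqref{Equ:Psi-f}, $\Psi_f(\beta^f_{j,g}) = \gamma^f_g\, s_{f\cap F_j}\, \gamma^f_{\sigma^f_j(g)}$; applying $(i_f)_*$ and using~\eqref{Equ:gamma-tau} together with $(i_f)_*(s_{f\cap F_j}) = s_{F_j}$ gives $\gamma_g\, s_{F_j}\, \gamma_{\sigma^f_j(g)}$, which $\eta_f$ drops into $W_P/H_f$. Along the left-bottom route, Lemma~\ref{Lem:j_f_star} gives $(j_f)_*(\beta^f_{j,g}) = \beta_{j,g}$, and then~\eqref{Equ:alpha} together with~\eqref{Equ:xi-def} yields $\Psi(\beta_{j,g}) = \gamma_g\, s_{F_j}\, \gamma_{\sigma_j(g)}$, which $\eta_f$ sends into $W_P/H_f$. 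Hence the entire commutativity question collapses to the single identity
$$\eta_f\bigl(\gamma_{\sigma_j(g)}\bigr) = \eta_f\bigl(\gamma_{\sigma^f_j(g)}\bigr) \quad \text{in } W_P/H_f,$$
or equivalently $\gamma_{\sigma_j(g)}\gamma_{\sigma^f_j(g)}^{-1}\in H_f$.

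To finish, I would invoke~\eqref{Equ:tau-sigma}, which tells us that $\sigma_j(g) - \sigma^f_j(g) \in G_f = \langle e_1,\dots,e_{n-k}\rangle$. Because $s_{F_1},\dots,s_{F_n}$ all meet at the vertex $v$ and therefore pairwise commute in $W_P$, the definition~\eqref{Equ:gamma} makes $\gamma$ into a group homomorphism from $(\Z_2)^n$ onto $\langle s_{F_1},\dots,s_{F_n}\rangle$, carrying $G_f$ isomorphically onto $H_f = \langle s_{F_1},\dots,s_{F_{n-k}}\rangle$. Hence $\gamma_{\sigma_j(g)}\gamma_{\sigma^f_j(g)}^{-1} = \gamma_{\sigma_j(g) + \sigma^f_j(g)} \in H_f$, completing the check. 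The content of the proof is really a single bookkeeping identity; the one subtlety to keep straight is the distinction between $\sigma_j$ and $\sigma^f_j$, which only disagree for $n+1\le j\le n+r$ and only by an element of $G_f$. This discrepancy is precisely why the diagram commutes modulo $H_f$ rather than at the level of $W_P$ itself, and is the reason the quotient by $H_f$ has to appear in the statement.
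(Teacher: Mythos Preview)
Your proof is correct and follows essentially the same route as the paper: both arguments check commutativity on the generators $\beta^f_{j,g}$, compute the two compositions as $\eta_f(\gamma_g s_{F_j}\gamma_{\sigma_j(g)})$ and $\eta_f(\gamma_g s_{F_j}\gamma_{\sigma^f_j(g)})$, and finish by observing that $\sigma_j(g)-\sigma^f_j(g)\in G_f$ forces $\gamma_{\sigma_j(g)}$ and $\gamma_{\sigma^f_j(g)}$ to differ by an element of $H_f$. One small wording caveat: $H_f$ is defined as the \emph{normal} subgroup generated by $s_{F_1},\dots,s_{F_{n-k}}$, so it need not equal the abelian subgroup $\langle s_{F_1},\dots,s_{F_{n-k}}\rangle$; but your argument only uses the containment $\gamma(G_f)\subset H_f$, which is all that is required.
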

   \begin{proof}
    For any $n+1\leq j \leq n+r$ and any $g\in (\Z_2)^{\dim(f)}$,
    $$ \overline{\Psi}\circ (j_f)_*(\beta^f_{j,g}) \overset{\eqref{Equ:j_f-star}}{=}
         \overline{\Psi}(\beta_{j,g}) = \eta_f (\Psi (\beta_{j,g})) 
         \overset{\eqref{Equ:xi-def}}{=}
     \eta_f(\xi_{j,g}) = \eta_f (\gamma_{g} s_{F_j} \gamma_{\sigma_j(g)}); $$
     $$\overline{(i_f)}_*\circ\Psi_f (\beta^f_{j,g})  \overset{\eqref{Equ:Psi-f}}{=}
       \overline{(i_f)}_*(\xi^f_{j,g})
     \overset{\eqref{Equ:Psi-f}}{=}
      \overline{(i_f)}_* ( \gamma^f_g s_{f\cap F_j} \gamma^f_{\sigma^f_j(g)} )
      \overset{\eqref{Equ:gamma-tau}}{=}  
      \eta_f( \gamma_{g} s_{F_j} \gamma_{\sigma^f_j(g)} ). $$
     
     By~\eqref{Equ:tau-sigma}, we have $\sigma^f_j(g) - \sigma_j(g) \in G_f$. So
     $\gamma_{\sigma^f_j(g)} \in \gamma_{\sigma_j(g)} \cdot H_f$. Then
       $\eta_f(\gamma_{\sigma^f_j(g)}) = \eta_f (\gamma_{\sigma_j(g)})$, which implies
     $ \overline{(i_f)}_*\circ\Psi_f (\beta^f_{j,g}) =  \overline{\Psi}\circ (j_f)_*(\beta^f_{j,g})$.  
     Then since $\pi_1(M_f)$ is generated by $\{ \beta^f_{j,g} \}$, the lemma is proved.   
   \end{proof}
   \n
   
   \begin{thm} \label{thm-Fund}
      Let $M$ be a small cover over a simple polytope $P$ and 
      $f$ be a proper face of $P$. Then the following two statements are equivalent.
   \begin{itemize}
    \item[(i)] The facial submanifold $M_f$ is $\pi_1$-injective in $M$.\n
     \item[(ii)] For any $F,F'\in \mathcal{F}(f^{\perp})$, we have
            $f\cap F\cap F' \neq \varnothing$ whenever $F\cap F' \neq \varnothing$.
   \end{itemize}         
  \end{thm}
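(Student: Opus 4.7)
The plan is to reduce the question to a computation in the right-angled Coxeter group $W_P$ via the monomorphisms $\Psi:\pi_1(M,v)\hookrightarrow W_P$ and $\Psi_f:\pi_1(M_f,v)\hookrightarrow W_f$ supplied by Lemma~\ref{Lem:Psi-Sequence} (and its $M_f$-analogue), together with the coset diagram of Lemma~\ref{Lem:Commute-Diagram}. The algebraic fact driving the argument is the classical parabolic subgroup theorem for right-angled Coxeter groups: for any subset $S$ of the Coxeter generators of $W_P$ the subgroup $\langle S\rangle$ is the right-angled Coxeter group on $S$ with the induced edge relations, and two such parabolics satisfy $\langle S\rangle\cap\langle S'\rangle=\langle S\cap S'\rangle$.

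For (i) $\Rightarrow$ (ii) I argue by contrapositive. Suppose $F,F'\in\mathcal{F}(f^{\perp})$ satisfy $F\cap F'\neq\varnothing$ and $f\cap F\cap F'=\varnothing$. Choose a vertex $v\in f$ and set $a=\lambda_f(f\cap F)$, $a'=\lambda_f(f\cap F')$. The loop
\[
\alpha \ =\ \beta^{f}_{F,0}\,\beta^{f}_{F',a}\,\beta^{f}_{F,a+a'}\,\beta^{f}_{F',a'}\ \in\ \pi_{1}(M_f,v)
\]
satisfies $\Psi_f(\alpha)=(s_{f\cap F}s_{f\cap F'})^{2}$ after telescoping cancellation using \eqref{Equ:xi-def}, \eqref{Equ:Psi-f} and $(\gamma^f_x)^2=1$. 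This is nontrivial because $f\cap F\cap F'=\varnothing$ makes $s_{f\cap F}s_{f\cap F'}$ an element of infinite order in $W_f$, so $\alpha\neq 1$. Meanwhile $\Psi\bigl((j_f)_{*}(\alpha)\bigr)=(s_Fs_{F'})^{2}=1$ because $F\cap F'\neq\varnothing$ forces $s_Fs_{F'}$ to be an involution in $W_P$; injectivity of $\Psi$ then gives $(j_f)_{*}(\alpha)=1$, contradicting $\pi_1$-injectivity of $M_f$.

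For (ii) $\Rightarrow$ (i), assume condition (ii) and let $\alpha\in\ker((j_f)_{*})$. The coset identity of Lemma~\ref{Lem:Commute-Diagram} gives $(i_f)_{*}\bigl(\Psi_f(\alpha)\bigr)\in H_f$, which is contained in the parabolic $\langle s_{F_1},\ldots,s_{F_{n-k}}\rangle$ indexed by the facets of $P$ containing $f$. On the other hand $(i_f)_{*}\bigl(\Psi_f(\alpha)\bigr)$ lies in the parabolic $\langle s_F:F\in\mathcal{F}(f^{\perp})\rangle$, and the two indexing sets are disjoint, so the parabolic intersection theorem forces $(i_f)_{*}\bigl(\Psi_f(\alpha)\bigr)=1$. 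Condition (ii) is precisely what makes the defining relations of $W_f$ coincide with those of the parabolic on $\mathcal{F}(f^{\perp})$, turning the surjection $(i_f)_{*}$ into an isomorphism onto that parabolic; hence $\Psi_f(\alpha)=1$ and, by injectivity of $\Psi_f$, $\alpha=1$.

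The main obstacle I anticipate is the book-keeping surrounding $H_f$ and the coset diagram: one must verify carefully that $(i_f)_{*}(\Psi_f(\alpha))$ really lands in $H_f$ rather than in a larger set of cosets, which requires tracking the discrepancy between $\sigma_j(g)$ and $\sigma^f_j(g)$ visible in~\eqref{Equ:tau-sigma}. Once this is pinned down, the parabolic calculus above does the rest, and the explicit loop $\alpha$ in the converse direction is a routine algebraic verification with the formulas for $\xi_{j,g}$ and $\xi^{f}_{j,g}$.
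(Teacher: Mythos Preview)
Your strategy for (i)$\Rightarrow$(ii) is exactly the paper's, but you elide a step: the assertion $\Psi\bigl((j_f)_{*}(\alpha)\bigr)=(s_Fs_{F'})^{2}$ is not automatic. By Lemma~\ref{Lem:j_f_star}, $(j_f)_{*}(\alpha)=\beta_{F,0}\beta_{F',a}\beta_{F,a+a'}\beta_{F',a'}$ with $a=\lambda_f(f\cap F)$, and applying $\Psi$ leaves factors $\gamma_{\lambda(F)+a}$, $\gamma_{\lambda(F')+a'}$ trapped between the $s_F,s_{F'}$; since in general $\lambda(F)\neq a$, these do not telescope immediately. What makes it work is that $\lambda(F)-a\in G_f$ and every facet $F_i\supset f$ meets both $F$ and $F'$, so these $\gamma$'s commute past $s_F,s_{F'}$ and square to $1$. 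The paper carries out the equivalent bookkeeping on the $\beta$-side via the relations~\eqref{Relation-3}; this is the ``Claim'' in its proof and deserves to be written out.

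Your argument for (ii)$\Rightarrow$(i) contains a genuine error. You assert that $H_f$ ``is contained in the parabolic $\langle s_{F_1},\ldots,s_{F_{n-k}}\rangle$'', but by definition $H_f$ is the \emph{normal closure} of $\{s_{F_1},\ldots,s_{F_{n-k}}\}$ in $W_P$, which is strictly larger than that parabolic whenever $P$ has facets outside $\{F_1,\ldots,F_{n-k}\}$ that fail to meet some $F_i$. So the parabolic intersection theorem $\langle S\rangle\cap\langle S'\rangle=\langle S\cap S'\rangle$ does not apply as you invoke it. The conclusion you want, namely $H_f\cap\langle s_F:F\in\mathcal{F}(f^{\perp})\rangle=\{1\}$, is nevertheless true, but for a different reason: killing the generators $s_{F_1},\ldots,s_{F_{n-k}}$ presents $W_P/H_f$ as the right-angled Coxeter group on the remaining generators $\{s_F:f\not\subset F\}$ with the edge relations inherited from $P$, and the parabolic on $\mathcal{F}(f^{\perp})\subset\{F:f\not\subset F\}$ therefore injects into this quotient. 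This is precisely what the paper's retraction $\zeta_f:W_P/H_f\to W_f$ in~\eqref{Equ:zeta_f} encodes; condition~(ii) enters only to make $\zeta_f$ well-defined, i.e.\ to ensure each relation $(\bar s_F\bar s_{F'})^2=1$ in $W_P/H_f$ with $F,F'\in\mathcal{F}(f^{\perp})$ maps to a genuine relation in $W_f$. Once you replace your parabolic-intersection step by this observation (or equivalently by the retraction), the rest of your argument---that (ii) makes $(i_f)_*$ an isomorphism onto the parabolic on $\mathcal{F}(f^{\perp})$, whence $\Psi_f(\alpha)=1$ and $\alpha=1$---goes through and matches the paper's proof.
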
 
   \begin{proof}
     (i)$\Rightarrow$(ii). Using the conventions at the beginning of this section,
     we choose a vertex $v\in f$ and let 
     $f=F_1\cap\cdots \cap F_{n-k}$, $\mathcal{F}(f^{\perp}) = 
     \{ F_{n-k+1}, \cdots, F_{n},\cdots, F_{n+r} \}$.\n

     Assume that there exist two facets $F_j,F_{j'} \in \mathcal{F}(f^{\perp})$,
     $n+1\leq j < j' \leq n+r$ so that
      $f\cap F_j\cap F_{j'} = \varnothing$ while $F_j\cap F_{j'} \neq \varnothing$. 
      By the definition of $W_f$, the element
      $s_{f\cap F_j} s_{f\cap F_{j'}} \in W_f$ is of infinite order since
       $(f\cap F_j)\cap (f\cap F_{j'}) =\varnothing$. So we have
      $$(s_{f\cap F_j} s_{f\cap F_{j'}})^2 \neq 1 \in W_f; \ \
      (s_{F_j}s_{F_{j'}})^2=1 \in W_P.$$ 
      
      Moreover, by the definitions of $\Psi$ and $\Psi_f$ (see~\eqref{Equ:Psi-f}), 
      it is easy to check that
      \[ \mathbf{x}^f_{j,j'} := \beta^f_{j,0} \beta^f_{j',\lambda_f(f\cap F_j)}
            \beta^f_{j, \lambda_f(f\cap F_j) + \lambda_f(f\cap F_{j'})} 
              \beta^f_{j',\lambda_f(f\cap F_{j'})}  \overset{\Psi_f}{\longrightarrow}
         (s_{f\cap F_j} s_{f\cap F_{j'}})^2 \neq 1. \]
      \[ \mathbf{x}_{j,j'} := \beta_{j,0} \beta_{j',\lambda(F_j)} \beta_{j, \lambda(F_j) +
       \lambda(F_{j'})} \beta_{j',\lambda(F_{j'})}  \overset{\Psi}{\longrightarrow}
         (s_{F_j} s_{F_{j'}})^2 =1. \]   
     Then since $\Psi$ and $\Psi_f$ are both monomorphisms, we have
     $$ \mathbf{x}^f_{j,j'}\neq 1 \in \pi_1(M_f,v);\ \
      \mathbf{x}_{j,j'} =1\in \pi_1(M,v).$$ 
     The following claim is the heart of our argument.\n
     \textbf{Claim:} $(j_f)_*(\mathbf{x}^f_{j,j'}) = \mathbf{x}_{j,j'} =1\in \pi_1(M,v)$, i.e. 
      $\mathbf{x}^f_{j,j'} \in  \mathrm{ker}((j_f)_*)$.\n
      By Lemma~\ref{Lem:j_f_star}, $(j_f)_*(\mathbf{x}^f_{j,j'}) =
       \beta_{j,0} \beta_{j',\lambda_f(f\cap F_j)} \beta_{j, \lambda_f(f\cap F_j) +
       \lambda_f(f\cap F_{j'})} \beta_{j',\lambda_f(f\cap F_{j'})}$ which looks different
       from $\mathbf{x}_{j,j'}$.
       But since $v\in f = F_1\cap\cdots\cap F_{n-k}$ and
        $f\cap F_j\neq \varnothing$, $f\cap F_{j'}\neq \varnothing$, we have
         $F_i\cap F_j \neq \varnothing$ and $F_i\cap F_{j'} \neq \varnothing$ for all $1\leq i \leq n-k$. 
        Note that the subgroup of $(\Z_2)^n$ generated by $\lambda(F_1),\ldots, \lambda(F_{n-k})$
        is $G_f$ (see~\eqref{Equ:G_f}).
         Then according to~\eqref{Relation-3}, we have the
        following relations among the generators $\{ \beta_{j,g} \}$ and $\{ \beta_{j',g}\}$ in 
        the presentation~\eqref{Relation-2-Sim} of $\pi_1(M,v)$:
       $$ \beta_{j,g} = \beta_{j,h} ,\ \beta_{j',g} = \beta_{j',h}, \ \text{whenever}\ 
         g-h\in G_f, g, h\in(\Z_2)^n.$$
       Then since $\lambda_f(f\cap F_{j}) - \lambda(F_{j})\in G_f$, 
       $\lambda_f(f\cap F_{j'}) - \lambda(F_{j'})\in G_f$, we have
      \[   \beta_{j',\lambda_f(f\cap F_j)} = \beta_{j',\lambda(F_j)}; \ 
           \beta_{j, \lambda_f(f\cap F_j) + \lambda_f(f\cap F_{j'})} =   \beta_{j, \lambda(F_j) +
       \lambda(F_{j'})} ;\ 
        \beta_{j',\lambda_f(f\cap F_{j'})} = \beta_{j',\lambda(F_{j'})} .     \]
       It follows that $(j_f)_*(\mathbf{x}^f_{j,j'}) = 
        \mathbf{x}_{j,j'}$. The claim is proved.\n
          
       By the above claim, $\ker((j_f)_*)$ is not trivial, i.e. $(j_f)_*$ is not injective.       
     \nn
     
    (ii)$\Rightarrow$(i).
     By Lemma~\ref{Lem:Commute-Diagram}, we have $\overline{(i_f)}_*\circ\Psi_f
       = \overline{\Psi}\circ (j_f)_*$. We already know that $\Psi_f$ is injective
       from Lemma~\ref{Lem:Psi-Sequence}. So to prove
     the injectivity of $(j_f)_*$, it is sufficient to show that
      $\overline{(i_f)}_*$ is injective under the condition (ii). Note that      
      a presentation of $W_P\slash H_f$ is given by the generators 
      $\{ \bar{s}_F\}_{F\in \mathcal{F}(P)}$
       with relations 
       $$\{\bar{s}_F=1\,|\, f\subset F \in \mathcal{F}(P)\} \cup \{ \bar{s}_F^2=1 \}_{F\in \mathcal{F}(P)} \cup
        \{ (\bar{s}_F\bar{s}_{F'})^2 =1, \forall  F\cap F'\neq \varnothing\},$$
     where $\bar{s}_F =\eta_f(s_F)$ for any $F\in \mathcal{F}(P)$. Using this presentation 
     of $W_P\slash H_f$, we can define a homomorphism $\zeta_f : W_P\slash H_f \rightarrow W_f$ by 
      \begin{equation} \label{Equ:zeta_f}
           \zeta_f(\bar{s}_{F})  = \begin{cases}
                       s_{f\cap F},   &   \text{ $F\in \mathcal{F}(f^{\perp})$; } \\
                        1    &   \text{ $F \notin\mathcal{F}(f^{\perp})$ .} \\
               \end{cases} 
     \end{equation}   
       We claim that $\zeta_f$ is well defined. Indeed, for any $F,F'\in \mathcal{F}(f^{\perp})$ with $F\cap F' \neq \varnothing$, we have 
    $(\bar{s}_F\bar{s}_{F'})^2=1$ in $W_P\slash H_f$.
     Meanwhile, from the condition $f\cap F\cap F' \neq \varnothing$
    we have $(s_{f\cap F} s_{f\cap F'})^2=1$ in $W_f$. This implies that $\zeta_f$ is well defined.
    \n
    
    It is clear that $\overline{(i_f)}_*(s_{f\cap F}) = \bar{s}_F$ for any
    $F\in\mathcal{F}(f^{\perp})$. So 
    $\zeta_f \circ \overline{(i_f)}_* =\mathrm{id}_{W_f}$, which implies that
     $\overline{(i_f)}_*$ is injective. This finishes the proof. 
   \end{proof}
   
   \n
   
   By the proof of Theorem~\ref{thm-Fund}, we obtain
    the following description of the kernel of
   $(j_f)_* : \pi_1(M_f,v)\rightarrow \pi_1(M,v)$ for any proper face $f$ of 
   $P$ and any vertex $v\in f$.
   
   \begin{thm} \label{thm-Fund-Kernel}
      Let $M$ be a small cover over a simple polytope $P$ and 
      $f$ be a proper face of $P$. Let $j_f: M_f \rightarrow M$ be the inclusion map.
    Then for any vertex $v\in f$, the kernel of
     $(j_f)_*: \pi_1(M_f,v)\rightarrow \pi_1(M,v)$
     is the normal subgroup of $\pi_1(M_f,v)$ generated by 
    the inverse image of the following set under $\Psi_f: \pi_1(M_f,v)\rightarrow W_f$.
     \[ \Xi_f = \{ (s_{f\cap F} s_{f\cap F'})^2 \in W_f \,|\, v\notin F, v\notin F',
      f\cap F\cap F' = \varnothing\ 
       \text{while} \ F\cap F' \neq \varnothing \}. \]
   \end{thm}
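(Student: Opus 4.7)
The plan is to follow and refine the strategy of Theorem~\ref{thm-Fund}, identifying the entire kernel rather than only detecting its nontriviality. The containment $\supseteq$ is immediate from that earlier proof: for each pair $(F_j, F_{j'})$ satisfying the defining conditions of $\Xi_f$, the explicit word $\mathbf{x}^f_{j,j'}$ constructed there maps under $\Psi_f$ to $(s_{f\cap F_j} s_{f\cap F_{j'}})^2$ and lies in $\ker((j_f)_*)$; since $\ker((j_f)_*)$ is normal in $\pi_1(M_f,v)$, the full normal closure of $\Psi_f^{-1}(\Xi_f)$ is contained inside.

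For the reverse containment my plan is to combine Lemma~\ref{Lem:Commute-Diagram} with a Coxeter-group analysis. First I would show that $\overline{\Psi} = \eta_f\circ\Psi$ is injective. Since $\gamma$ restricts to an isomorphism $G_f\xrightarrow{\sim} H_f$ with $\phi|_{H_f}$ as its inverse, we have $H_f\cap\ker(\phi)=\{1\}$, so $\eta_f$ is injective on $\Psi(\pi_1(M,v))=\ker(\phi)$. Together with the injectivity of $\Psi_f$ (Lemma~\ref{Lem:Psi-Sequence}) and the commutative diagram, this forces
\[
\ker((j_f)_*) \;=\; \Psi_f^{-1}\bigl(\ker(\overline{(i_f)}_*)\bigr),
\]
so the task reduces to identifying $\ker(\overline{(i_f)}_*) \subset W_f$.

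Both $W_f$ and $W_P/H_f$ are right-angled Coxeter groups. $W_f$ corresponds to the graph on $\mathcal{F}(f^\perp)$ with edges the pairs meeting inside $f$, while the presentation of $W_P/H_f$ obtained in the proof of Theorem~\ref{thm-Fund} by killing $s_{F_1},\ldots,s_{F_{n-k}}$ realizes it as the right-angled Coxeter group on the vertex set $\mathcal{F}(P)\setminus\{F_1,\ldots,F_{n-k}\}$ with edges the pairs meeting in $P$. I would factor $\overline{(i_f)}_*$ as $W_f\twoheadrightarrow W'_f\hookrightarrow W_P/H_f$, where $W'_f$ is the right-angled Coxeter group on vertex set $\mathcal{F}(f^\perp)$ with edges the pairs meeting in $P$. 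The second map is the standard embedding of a parabolic subgroup of a right-angled Coxeter group as the group on the induced subgraph, while the first kills exactly the extra commutation relations $(s_{f\cap F} s_{f\cap F'})^2 = 1$ coming from pairs that meet in $P$ but not in $f$. Consequently $\ker(\overline{(i_f)}_*)$ equals the normal closure of $\Xi_f$ in $W_f$.

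The main technical obstacle I anticipate is the final translation from ``normal closure in $W_f$'' to ``normal closure in $\pi_1(M_f,v)$'' via $\Psi_f$. Here one must verify that the $\gamma^f$-conjugates of elements of $\Xi_f$ produce nothing beyond the normal closure of $\Psi_f^{-1}(\Xi_f)$ inside $\pi_1(M_f,v)$. This should follow from the semidirect decomposition $W_f\cong\pi_1(M_f,v)\rtimes\gamma^f((\Z_2)^{\dim(f)})$, combined with the observation that the $(\Z_2)^{\dim(f)}$-action merely permutes the index $g$ of the generators $\beta^f_{j,g}$ that already appear in the presentation, so that the $\gamma^f$-orbit of each $\mathbf{x}^f_{j,j'}$ is recovered by varying $g$ in the same word template and hence lies in the normal closure of $\Psi_f^{-1}(\Xi_f)$.
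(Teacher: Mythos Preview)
Your overall architecture is the same as the paper's, but one step is wrong as written. You claim that $\gamma$ restricts to an isomorphism $G_f\xrightarrow{\sim}H_f$, and hence that $H_f\cap\ker(\phi)=\{1\}$, making $\overline{\Psi}$ injective. This is false: by definition $H_f$ is the \emph{normal closure} of $\{s_{F_1},\ldots,s_{F_{n-k}}\}$ in $W_P$, whereas $\gamma(G_f)$ is only the (abelian, rank $n-k$) parabolic subgroup they generate. Whenever some facet $F_j$ is disjoint from $F_1$, the element $(s_{F_j}s_{F_1})^2 = s_{F_j}s_{F_1}s_{F_j}^{-1}\cdot s_{F_1}$ lies in $H_f\cap\ker(\phi)$ and is nontrivial. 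So $\overline{\Psi}$ is generally \emph{not} injective, and the asserted equality $\ker((j_f)_*)=\Psi_f^{-1}(\ker(\overline{(i_f)}_*))$ is not justified by your argument.

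Fortunately this does not sink the proof. From Lemma~\ref{Lem:Commute-Diagram} you get for free the inclusion $\ker((j_f)_*)\subseteq\Psi_f^{-1}\bigl(\ker(\overline{(i_f)}_*)\bigr)$, and that is all you need: together with your (correct) identification $\ker(\overline{(i_f)}_*)=J_f$ via the factorization $W_f\twoheadrightarrow W'_f\hookrightarrow W_P/H_f$, and the translation step $\Psi_f^{-1}(J_f)=I_f$, this sandwiches $\ker((j_f)_*)$ between $I_f$ and itself. The paper runs the same idea in quotient form: it passes to $\pi_1(M_f,v)/I_f$ and $W_f/J_f$ and shows $\overline{(i_f)}'_*$ is injective via the retraction $\zeta'_f$, which is exactly your parabolic--subgroup embedding rephrased. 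Your final ``translation'' concern is the point the paper records as ``$\Psi_f$ maps $I_f$ isomorphically onto $J_f$''; your proposed justification through the semidirect product $W_f\cong\ker(\phi_f)\rtimes\gamma^f((\Z_2)^{\dim(f)})$ and the fact that $\gamma^f_g$-conjugation sends $\Psi_f(\mathbf{x}^f_{j,j'})$ to $\Psi_f(\mathbf{x}^f_{j,j',g})$ is the right way to flesh it out.
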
 
   \begin{proof}
    Let $I_f$ be the normal 
    subgroup of $\pi_1(M_f,v)$ generated by the set $\Psi_f^{-1}(\Xi_f)$.
    Let $J_f$ be the normal subgroup of $W_f$ generated by the set $\Xi_f$.
    \begin{itemize}
      \item By definition, $\Psi_f$ maps $I_f$ isomorphically onto $J_f$. 
      \n
      \item For any $(s_{f\cap F} s_{f\cap F'})^2 \in \Xi_f$, 
     the proof of (i)$\Rightarrow$(ii) in Theorem~\ref{thm-Fund} tells us that 
     $\Psi_f^{-1}((s_{f\cap F} s_{f\cap F'})^2) \in \ker((j_f)_*)$.
     So $I_f$ is contained in $\ker((j_f)_*)$.\n
     
     \item We clearly have $J_f \subset \ker((i_f)_*)$ and hence
           $J_f \subset \ker(\overline{(i_f)}_*)$. 
    \end{itemize}  
     Then we obtain the following commutative diagram from the
      diagram~\eqref{Equ:Commute-Diagram}.
       \begin{equation}\label{Equ:Commute-Diagram-quotient}
        \xymatrix{
       \pi_1(M_f,v)\slash I_f \ar[r]^{\ \ \ \, \Psi'_f} \ar[d]^{(j_f)'_*} & W_f\slash J_f 
      \ar[d]^{\overline{(i_f)}'_*}  \\
       \pi_1(M,v)  \ar[r]^{\ \overline{\Psi} } & W_{P}\slash H_f     
      } 
      \end{equation}
      where $(j_f)'_*$, $\Psi'_f$ and $\overline{(i_f)}'_*$ are the homomorphisms induced by
      $(j_f)_*$, $\Psi_f$ and $\overline{(i_f)}_*$ under the quotients by $I_f$ and $J_f$, respectively.\n
      
      So to prove $\ker((j_f)_*) = I_f$, it is equivalent to prove that $(j_f)'_*$ is injective.
       Note that $\Psi'_f$ is still injective. So by the 
       commutativity of the diagram~\eqref{Equ:Commute-Diagram-quotient},
        to prove $(j_f)'_*$ is injective, it is  
     sufficient to show that $\overline{(i_f)}'_*$ is
      injective.\n
      Indeed, the map $\zeta_f : W_P\slash H_f \rightarrow W_f$ 
       defined in~\eqref{Equ:zeta_f} naturally induces a 
       map $\zeta'_f : W_P\slash H_f \rightarrow W_f\slash J_f$.
        It is easy to check that $\zeta'_f$ is well defined and
        $\zeta'_f \circ \overline{(i_f)}'_* = \mathrm{id}_{W_f\slash J_f}$.
        This implies that $\overline{(i_f)}'_*$ is injective. So the theorem is proved.
   \end{proof}

     If a proper face $f$ of $P$ is a simplex with $\dim(f)\geq 2$, then for any $F,F'\in \mathcal{F}(f^{\perp})$,
     we have $f\cap F \cap F'=(f\cap F)\cap (f\cap F')$ is nonempty. So
      the condition in Theorem~\ref{thm-Fund} always holds for such a face $f$. 
        Then we obtain the following.
         
         \begin{cor} \label{Cor:simplex}
          Let $M$ be a small cover over a simple polytope $P$. 
            If a proper face $f$ of $P$ is a simplex
            and $\dim(f)\geq 2$, then
            the facial submanifold $M_f$ is $\pi_1$-injective in $M$.
         \end{cor}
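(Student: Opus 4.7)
The plan is to deduce this corollary directly from Theorem~\ref{thm-Fund} by verifying its condition~(ii) for every simplex face of dimension at least~$2$. Since Theorem~\ref{thm-Fund} reduces $\pi_1$-injectivity of $M_f$ in $M$ to a purely combinatorial statement about how facets in $\mathcal{F}(f^\perp)$ meet, the work is entirely at the level of $P$.

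First I would recall that the codimension-one faces of $f$ are exactly $\{f\cap F : F\in\mathcal{F}(f^\perp)\}$, so for any two $F,F'\in\mathcal{F}(f^\perp)$ we have the identity $f\cap F\cap F' = (f\cap F)\cap(f\cap F')$, i.e.\ we are asking whether two facets of the polytope $f$ intersect. Thus condition~(ii) in Theorem~\ref{thm-Fund} becomes a question about the face poset of $f$ alone.

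Next I would observe that when $f$ is a $k$-simplex with $k\geq 2$, any two of its $k+1$ facets meet in a face of codimension $2$ in $f$, which has dimension $k-2\geq 0$ and is therefore nonempty. Hence $(f\cap F)\cap(f\cap F')\neq\varnothing$ for all $F,F'\in\mathcal{F}(f^\perp)$, and in particular the implication ``$F\cap F'\neq\varnothing\Rightarrow f\cap F\cap F'\neq\varnothing$'' holds vacuously strongly (the conclusion always holds). This verifies (ii), so by Theorem~\ref{thm-Fund} we conclude that $M_f$ is $\pi_1$-injective in $M$.

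There is no genuine obstacle here; the statement is essentially a one-line application of Theorem~\ref{thm-Fund} together with the trivial combinatorial fact that any two facets of a simplex of dimension $\geq 2$ share a common vertex. The only mild care needed is to distinguish the intersection inside $P$ from the intersection inside $f$, but the identity $f\cap F\cap F' = (f\cap F)\cap(f\cap F')$ makes this transparent.
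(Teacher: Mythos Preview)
Your proposal is correct and follows essentially the same approach as the paper: both reduce to Theorem~\ref{thm-Fund} by noting that $f\cap F\cap F' = (f\cap F)\cap(f\cap F')$ and that any two facets of a simplex of dimension $\geq 2$ meet nontrivially.
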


          We can use Theorem~\ref{thm-Fund} to a description
          of aspherical small covers in terms of the $\pi_1$-injectivity 
          of their facial submanifolds.

\n
          
           \begin{prop} \label{Prop:Flag-Inject}
            A small cover $M$ over a simple polytope $P$
            is aspherical if and only if all the facial submanifolds of 
            $M$ are $\pi_1$-injective in $M$. 
           \end{prop}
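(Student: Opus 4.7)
The plan is to deduce this directly from Theorem~\ref{thm-Fund} together with the Davis--Januszkiewicz--Scott asphericity criterion already quoted in the introduction: a small cover $M$ over $P$ is aspherical if and only if $P$ is flag, i.e., every pairwise intersecting collection of facets of $P$ has a nonempty common intersection.

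For the ``only if'' direction, assume $P$ is flag and let $f$ be an arbitrary proper face, written as $f=F_{1}\cap\cdots\cap F_{l}$ where $F_{1},\ldots,F_{l}$ are exactly the facets of $P$ containing $f$. Given $F,F'\in\mathcal{F}(f^{\perp})$ with $F\cap F'\neq\varnothing$, the $l+2$ facets $F_{1},\ldots,F_{l},F,F'$ pairwise intersect: each $F_{i}\cap F_{j}$ contains $f$; $F\cap F_{i}$ contains $F\cap f\neq\varnothing$ because $F\in\mathcal{F}(f^{\perp})$; similarly for $F'$; and $F\cap F'$ is nonempty by hypothesis. Flagness of $P$ then forces $f\cap F\cap F'=F_{1}\cap\cdots\cap F_{l}\cap F\cap F'$ to be nonempty. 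Condition (ii) of Theorem~\ref{thm-Fund} thus holds for every proper face $f$, so every facial submanifold of $M$ is $\pi_{1}$-injective in $M$.

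For the ``if'' direction I argue the contrapositive: if $P$ is not flag, I exhibit a face $f$ for which $M_{f}$ is not $\pi_{1}$-injective in $M$. Non-flagness means one can choose a minimal collection of facets $F_{1},\ldots,F_{k}$, $k\geq 3$, that pairwise intersect and satisfy $F_{1}\cap\cdots\cap F_{k}=\varnothing$, while every proper subcollection has a nonempty common intersection. Set $f=F_{1}\cap\cdots\cap F_{k-2}$ (with the convention $f=F_{1}$ when $k=3$); by minimality this is a face of codimension $k-2$. Then $f\cap F_{k-1}=F_{1}\cap\cdots\cap F_{k-1}$ and $f\cap F_{k}=F_{1}\cap\cdots\cap F_{k-2}\cap F_{k}$ are nonempty faces of codimension one in $f$, so $F_{k-1},F_{k}\in\mathcal{F}(f^{\perp})$. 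However $F_{k-1}\cap F_{k}\neq\varnothing$ while $f\cap F_{k-1}\cap F_{k}=F_{1}\cap\cdots\cap F_{k}=\varnothing$, so condition (ii) of Theorem~\ref{thm-Fund} fails at $f$.

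The only step that requires a bit of care is choosing the right face $f$ in the non-flag case, but the minimal-missing-simplex formulation essentially forces the choice; beyond this, no additional machinery is needed besides Theorem~\ref{thm-Fund} and the flagness characterization of asphericity.
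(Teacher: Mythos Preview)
Your argument is correct. The ``only if'' direction is essentially the paper's (flagness immediately gives condition~(ii) of Theorem~\ref{thm-Fund} for every face), but your ``if'' direction is genuinely different and more direct. The paper proves flagness of $P$ by an inductive argument on the dual simplicial sphere $K_P=\partial P^*$: it first uses Corollary~\ref{Cor:3-belt} to rule out $3$-belts, then invokes the inductive hypothesis to conclude that each facet $F$ is flag, and finally argues via intersections of vertex links (citing a lemma of Davis--Edmonds) that no $\mathrm{Lk}(\sigma_f,K_P)$ contains an empty triangle. Your route is more elementary and self-contained: by choosing a minimal missing face $\{F_1,\ldots,F_k\}$ and setting $f=F_1\cap\cdots\cap F_{k-2}$, you land directly on a face violating condition~(ii), avoiding both the induction and the external reference. (It is worth noting, as you implicitly use, that minimality forces $k\le n+1$ in a simple $n$-polytope, so $f$ has dimension at least~$1$ and the appeal to Theorem~\ref{thm-Fund} is legitimate.) The paper's approach has the incidental byproduct of showing each facet is itself flag, but for the proposition at hand your argument is cleaner.
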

           \begin{proof}
             If $M$ is aspherical, then $P$ is a flag polytope by~\cite[Theorem 2.2.5]{DavJanScott98}. Since any proper 
          face of a flag simple polytope clearly satsifies the
          condition (ii) in Theorem~\ref{thm-Fund}, it follows that
          all the facial submanifolds of $M$ 
           are $\pi_1$-injective in $M$. This proves the ``only if'' part
           of the proposition.\n
           
           Let $K_P= \partial P^*$ where $P^*$ is
           the dual polytope of $P$. Then $K_P$ is a simplicial sphere.
            To prove that $M$ is aspherical, it is equivalent to
           prove that $P$ is a flag polytope (i.e. $K_P$ is a flag simplicial
            complex) by~\cite[Theorem 2.2.5]{DavJanScott98}. 
            Furthermore, to prove that $K_P$ is flag, it is 
            equivalent to prove that for any simplex $\sigma$ of $K_P$ (including the empty simplex),
            the link $\mathrm{Lk}(\sigma,K_P)$ of $\sigma$ in $K_P$ has no 
            empty triangle (see~\cite[Lemma 3.4]{DavEdm14}). 
            For any proper 
            face $f$ of $P$, let $\sigma_f$ be the simplex in $K_P$ 
             corresponding to $f$. Note that $f$ is also a simple polytope and 
             $K_f =\partial f^*$ is isomorphic to $\mathrm{Lk}(\sigma_f, K_P)$
             as a simplex complex. \n
            
           If all the facial submanifolds of $M$ 
           are $\pi_1$-injective in $M$, then $P$ has no $3$-belts by
           Corollary~\ref{Cor:3-belt}. This implies that $K_P$
            contains no empty triangle (note that $K_P$ is the link of the empty
            simplex). Moreover, for any facet $F$ of $P$, all the facial submanifolds of $M_F$ must be $\pi_1$-injective in $M_F$. So by the 
            induction on the dimension of $M$, we can assume that $F$ is a 
            flag polytope, i.e. $K_F\cong \mathrm{Lk}(\sigma_F, K_P)$ is a flag complex. Note that $\sigma_F$ is a vertex of $K_P$.
            For any codimension-$k$ face $f$ of $P$ with $k\geq 2$, there exist $k$ distinct facets of $P$ so that $f=F_1\cap\cdots\cap F_k$. The vertex set of
             $\sigma_f$ is $\{\sigma_{F_1}, \cdots, \sigma_{F_k} \}$ and 
            $$\mathrm{Lk}(\sigma_f, K_P) = \bigcap^k_{i=1} 
            \mathrm{Lk}(\sigma_{F_i}, K_P). $$
            Assume that $\mathrm{Lk}(\sigma_f, K_P)$ contains an empty triangle with
            vertices $v_1,v_2,v_3$ (i.e. $v_1,v_2,v_3$ are pairwise connected by an edge in $\mathrm{Lk}(\sigma_f, K_P)$,  but they do no span a $2$-simplex in $\mathrm{Lk}(\sigma_f, K_P)$). Then since
            $\mathrm{Lk}(\sigma_{F_i}, K_P)$ is a flag complex, 
            $v_1,v_2,v_3$ must span a $2$-simplex in $\mathrm{Lk}(\sigma_{F_i}, K_P)$, denoted by $\tau_i$ for any $1\leq i \leq k$.
             But since any simplex in $K_P$ is determined uniquely by its vertex set, we must have $\tau_1=\cdots =\tau_k$. This implies that the $2$-simplex
             $\tau_i$ is actually
              contained in $\mathrm{Lk}(\sigma_f, K_P)$. This contradicts our assumption that $v_1,v_2,v_3$ do not span a $2$-simplex in $\mathrm{Lk}(\sigma_f, K_P)$. So $\mathrm{Lk}(\sigma_f, K_P)$ contains no
               empty triangle. Then we can conclude that $K_P$ is a flag complex
               by~\cite[Lemma 3.4]{DavEdm14},
               since for any nonempty 
               simplex $\sigma$ in $K_P$, there exists a unique face $f$
                with $\sigma=\sigma_f$.            
               This proves the ``if'' part of the proposition.
           \end{proof}

           \begin{rem} \label{Rem:Flag-Metric}
             There is another way to prove the ``only if'' part of
              Proposition~\ref{Prop:Flag-Inject} using ideas from
              metric geometry. 
              By~\cite[Theorem 2.2.5]{DavJanScott98},
           the natural piecewise Euclidean cubical metric $d_{\square}$
           on $M$ with respect to the small cubes decomposition $\mathcal{C}^s(M)$ 
           of $M$ is nonpositively curved if and only
             if $M$ is aspherical. Moreover,
             by~\cite[Corollary 1.7.3]{DavJanScott98} the inclusion of 
             the facial submanifold $M_f$ into 
             $(M,d_{\square})$ as a cubical subcomplex is a totally geodesic embedding.
             Then the ``only if'' part of Proposition~\ref{Prop:Flag-Inject} follows from these facts because
             any totally geodesic embedding into a non-positively curved metric
             space will induce a monomorphism in the fundamental group
            (see~\cite[Remark 1.7.4]{DavJanScott98}). But since the existence of a metric of nonpositive curvature is essential for this argument, this approach
             does not work when $P$ is not flag.       
           \end{rem}
      \n
            In Kirby's list~\cite{Kirby96} of problems in low-dimensional 
            topology, the Problem 4.119 
            asks one to find examples of aspherical $4$-manifolds with $\pi_1$-injective
            immersed $3$-manifolds with infinite fundamental group and wonders whether
             such kind of examples are
            common. If a small cover $M$ over a simple $4$-polytope $P$ 
               is aspherical, $P$ must be a flag polytope
             by~\cite[Theorem 2.2.5]{DavJanScott98}. Since any facet $F$ of $P$ is also 
             a flag polytope, $M_F$ 
             is a closed aspherical $3$-manifold 
             (hence with infinite fundamental group) and, $M_F$ is $\pi_1$-injective in $M$
              by Proposition~\ref{Prop:Flag-Inject}. So
             all $4$-dimensional
             aspherical small covers satisfy the condition in the Problem 4.119.
        
  \vskip .4cm
  
  \section{$3$-dimensional small covers and nonnegative scalar curvature} \label{Sec:ScalarCurvature}
    
    In this section, we will use the results obtained in
     Section~\ref{Sec:Facial-SubMfd} to study
    the scalar curvature of Riemannian metrics on $3$-dimensional small covers and real moment-angle manifolds.
    The reader is referred to~\cite{Peter06} for the basic notions of Riemannian geometry.
     Since every closed $3$-manifold has a unique smooth structure (Moise's theorem), we do not need to address the smooth structures here.
    First of all, we show that any $3$-dimensional small cover has a
    non-simply-connected embedded $\pi_1$-injective surface.

   \begin{prop} \label{Prop:Facial-3-dim}
     For any small cover $M$ over a $3$-dimensional simple polytope $P$, there
     always exists a facet $F$ of $P$ so that the facial submanifold $M_F$ is 
     $\pi_1$-injective in $M$.
   \end{prop}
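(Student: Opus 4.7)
By Corollary \ref{Cor:3-belt}, the facial submanifold $M_F$ is $\pi_1$-injective in $M$ if and only if $F$ is contained in no 3-belt of $P$. So the plan reduces to proving the purely combinatorial statement that every simple 3-polytope $P$ has a facet lying in no 3-belt. I would pass to the dual simplicial 2-sphere $\partial P^*$, where 3-belts of $P$ correspond to \emph{empty 3-cycles} (3-cycles in the 1-skeleton not bounding a 2-simplex) and facets of $P$ correspond to vertices. The reformulated goal is to find a vertex of $\partial P^*$ lying on no empty 3-cycle.

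If $\partial P^*$ has no empty 3-cycle at all, any vertex works. Otherwise the plan is to run a minimality argument. For each empty 3-cycle $T$ of $\partial P^*$, the curve $T$ separates $\partial P^* \cong S^2$ into two 2-disks, and I set $a(T)$ to be the smaller of the two counts of interior vertices (vertices of $\partial P^*$ strictly inside the disk). An elementary observation gives $a(T) \geq 1$: a side of $T$ with no interior vertex would be triangulated with only the three boundary vertices of $T$ and hence be a single 2-simplex, forcing $T$ to bound a 2-simplex and contradicting emptiness. I would then choose an empty 3-cycle $T^*$ minimizing $a^* := \min_T a(T) \geq 1$, let $R$ be a minimal side of $T^*$, and show that any vertex $v \in \mathrm{int}(R)$ (which exists since $a^* \geq 1$) lies on no empty 3-cycle.

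The core step is a contradiction via minimality. Suppose $v$ were in some empty 3-cycle $T' = \{v, v', v''\}$. Because $v \in \mathrm{int}(R)$ and the 1-skeleton of $\partial P^*$ is embedded in $S^2$, every neighbor of $v$ sits in $\overline{R}$, so $T' \subset \overline{R}$. The open set $S^2 \setminus \overline{R}$ is a connected open disk disjoint from $T'$, hence sits entirely inside one of the two disks bounded by $T'$; the other disk $D_1$ then satisfies $D_1 \subseteq \overline{R}$. The goal is then to bound the interior-vertex count of $D_1$: emptiness of $T'$ gives $|\mathrm{int}(D_1)| \geq 1$ by the same triangulation argument, while the inclusion $\mathrm{int}(D_1) \subseteq \mathrm{int}(R) \setminus T'$ (together with $v \in T' \cap \mathrm{int}(R)$) gives $|\mathrm{int}(D_1)| \leq a^* - 1$. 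Hence $a(T') \leq |\mathrm{int}(D_1)| \leq a^* - 1 < a^*$, contradicting the minimality of $a^*$.

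The main obstacle is establishing the inclusion $\mathrm{int}(D_1) \subseteq \mathrm{int}(R) \setminus T'$, which amounts to ruling out the possibility that a vertex of $T^*$ on $\partial R$ but outside $T'$ slips into $\mathrm{int}(D_1)$ and inflates the count beyond $a^* - 1$. I expect this to split into subcases according as $T'$ shares $0$, $1$, or $2$ vertices with $T^*$; in each case a local analysis at the shared vertices of the embedded planar graph $\partial P^*$ should confirm that the remaining vertices of $T^*$ lie on the opposite side $D_2$ of $T'$. Once this bookkeeping is settled, the two inequalities on $|\mathrm{int}(D_1)|$ collide and close the argument.
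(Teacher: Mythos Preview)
Your approach is correct and is the dual formulation of the paper's own argument. The paper works directly on $\partial P$: assuming for contradiction that every facet lies in some $3$-belt, it picks a $3$-belt, takes one of the two regular disks it bounds, observes that any $3$-belt through a facet inside that disk must lie entirely within the disk together with the original belt, and iterates to produce an infinite strictly decreasing chain of such regions---impossible since $P$ has finitely many facets. Your minimality argument on $a(T)$ in $\partial P^*$ packages exactly the same descent, with the cosmetic difference that you extract a specific good vertex rather than run the contradiction to the end.

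The obstacle you flag needs no case split. Since $D_1 \subseteq \overline{R}$ and $\mathrm{int}(D_1)$ is open in $S^2$, no point of $\partial R = T^*$ can lie in $\mathrm{int}(D_1)$: any $w \in \partial R$ has every $S^2$-neighborhood meeting $S^2 \setminus \overline{R}$, whereas a neighborhood of a point of $\mathrm{int}(D_1)$ would have to stay inside $D_1 \subseteq \overline{R}$. Hence the vertex set of $\mathrm{int}(D_1)$ already lies in $\mathrm{int}(R) \setminus T'$, and $|\mathrm{int}(D_1)| \le a^* - 1$ follows at once, regardless of how many vertices $T'$ shares with $T^*$.
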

   \begin{proof}
      By Corollary~\ref{Cor:3-belt}, the proposition is equivalent to saying that
      there always exists a facet $F$ of $P$ which is not contained in any $3$-belt on
      $P$. Otherwise assume that 
      every facet of $P$ is contained in some $3$-belt on 
      $P$. For the sake of brevity, we call a subset $D \subset \partial P$ 
      a \emph{regular disk} of $P$ if
      $D$ is the union of some facets of $P$ and 
      it is homeomorphic to the standard $2$-disk.\n
      
      Suppose $F_1,\cdots, F_m$ are all the facets of $P$ where
       $F_1, F_2, F_3$ form a $3$-belt on $P$. Then 
      the closure of $P\backslash (F_1\cup F_2\cup F_3)$ 
      consists of two disjoint regular disks of $P$, denoted by $D_1$ and $D'_1$. 
      By our assumption, any facet $F$ in $D_1$ 
      is contained in some $3$-belt on $P$. Clearly
      the $3$-belt containing $F$ can not contain any facet in $D'_1$.  
      This implies that there exists a $3$-belt in the regular disk 
       $\widetilde{D}_1 = D_1\cup F_1\cup F_2\cup F_3$ other than $(F_1, F_2, F_3)$. 
       Note that the number
      of facets in $\widetilde{D}_1$ is strictly less than $P$. 
      Next, we choose a $3$-belt $(F_{i_1},F_{i_2},F_{i_3}) \neq (F_1, F_2, F_3)$ 
      in $\widetilde{D}_1$.
      Then the closure of 
      $\widetilde{D}_1\backslash(F_{i_1} \cup F_{i_2} \cup F_{i_3})$  
      consists of two connected components, at
     least one of which is a regular disk of $P$, denoted by $D_2$.
      By our assumption any facet $F$ in $D_2$ is 
      contained in a $3$-belt in $P$ different from
       $(F_{i_1},F_{i_2},F_{i_3})$, which must lie
      in the regular disk
       $\widetilde{D}_2 = D_2\cup F_{i_1}\cup F_{i_2} \cup F_{i_3}$.
      By repeating this argument, we can obtain an infinite sequence of 
      regular disks $\{\widetilde{D}_i\}^{\infty}_{i=1}$ on $P$ where
      $\widetilde{D}_i\supsetneq \widetilde{D}_{i+1}$ for all $i\geq 1$ and 
      each  $\widetilde{D}_i$ contains a $3$-belt.
      But this is impossible since $P$ has only finitely many facets.
      The proposition is proved.     
   \end{proof}
   
   \begin{rem}
     For an even dimensional small cover $M$, it is possible that
     there are no codimension-one  $\pi_1$-injective facial submanifolds in $M$.
     Indeed, for any positive even integer $n$, let $P$ be the product of
      $n\slash 2$ copies of $2$-simplices $\Delta^2$ and let $M$ be any small cover $P$.
      For any $1\leq j \leq n\slash 2$, let
       $f^j_1,f^j_2,f^j_3$ be the three edges of the $j$-th copy of $\Delta^2$ in $P$. 
       Then all the facets of $P$ are
      $$\{ F^j_i = \Delta^2\times \cdots \times f^j_i \times \cdots \times \Delta^2 \,|\,
            1\leq j \leq n\slash 2, 1\leq i \leq 3\}.$$
      Clearly $(F^j_1, F^j_2, F^j_3)$ is a $3$-belt on $P$ for 
      any $1\leq j \leq n\slash 2$.
      So every facet of $P$ is contained in some $3$-belt on $P$. 
      Then by Corollary~\ref{Cor:3-belt}, $M$ 
      has no codimension-one $\pi_1$-injective facial submanifolds. But this kind of examples 
      seem quite rare. In addition, it is not clear to us whether there are such kind of small covers
       in any odd dimension greater than $3$.
   \end{rem}
   
   \begin{cor} \label{Cor:RMA-Dim3}
     For any $3$-dimensional simple polytope $P$, the real moment-angle manifold $\R \mathcal{Z}_P$ 
     has an embedded $\pi_1$-injective surface which is homeomorphic to $\R\mathcal{Z}_F$ for some facet $F$ of $P$. 
   \end{cor}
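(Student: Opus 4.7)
The strategy is to transfer the $\pi_1$-injective facial submanifold supplied by Proposition~\ref{Prop:Facial-3-dim} from a small cover of $P$ up to $\R\mathcal{Z}_P$ via the canonical covering relating the two.

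First, I would fix an auxiliary small cover $M$ over $P$. Such an $M$ exists, since the Four Color Theorem applied to the planar facet-adjacency graph of the simple $3$-polytope $P$ yields a non-degenerate characteristic function valued in $\{e_1,e_2,e_3,e_1+e_2+e_3\}\subset(\Z_2)^3$. Applying Proposition~\ref{Prop:Facial-3-dim} to this $M$ then produces a facet $F$ of $P$ for which the inclusion $j_F:M_F\hookrightarrow M$ is $\pi_1$-injective.

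Next, by Definition~\ref{Defi:Real-MAM} there is a rank-$(m-n)$ subgroup $H\subset(\Z_2)^m$ acting freely on $\R\mathcal{Z}_P$ with quotient $M$; let $q:\R\mathcal{Z}_P\to M$ be the resulting regular covering. The projections of $\R\mathcal{Z}_P$ and $M$ to $P$ are compatible with $q$, so $q^{-1}(M_F)=\pi_P^{-1}(F)$. Unpacking the gluing in Definition~\ref{Defi:Real-MAM} restricted to $F\times(\Z_2)^m$ shows that $\pi_P^{-1}(F)$ is a disjoint union of copies of $\R\mathcal{Z}_F$. Pick any connected component $X$: it is embedded in $\R\mathcal{Z}_P$, homeomorphic to $\R\mathcal{Z}_F$, and the restriction $q|_X:X\to M_F$ is a covering of connected manifolds.

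Finally, $\pi_1$-injectivity of $X\hookrightarrow\R\mathcal{Z}_P$ will follow from the commutative square
\[
\xymatrix{
\pi_1(X) \ar[r]^{i_*} \ar[d]_{(q|_X)_*} & \pi_1(\R\mathcal{Z}_P) \ar[d]^{q_*} \\
\pi_1(M_F) \ar[r]^{(j_F)_*} & \pi_1(M).
}
\]
The two vertical maps are injective because they are induced by covering projections, and $(j_F)_*$ is injective by the choice of $F$. Hence $q_*\circ i_*=(j_F)_*\circ(q|_X)_*$ is injective, and injectivity of $q_*$ forces $i_*$ to be injective as well. The one step requiring genuine care is the verification that each connected component of $\pi_P^{-1}(F)$ is homeomorphic to $\R\mathcal{Z}_F$; this amounts to tracking the equivalence relation in Definition~\ref{Defi:Real-MAM} on $F\times(\Z_2)^m$ and noting that the coordinates $\mu(F')$ with $F'$ disjoint from $F$ merely permute the copies, so no real obstacle arises there.
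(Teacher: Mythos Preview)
Your proof is correct and follows essentially the same approach as the paper: invoke the Four Color Theorem to produce a small cover $M$ over $P$, apply Proposition~\ref{Prop:Facial-3-dim} to obtain a $\pi_1$-injective facial submanifold $M_F$, and then lift the injectivity to a component of $\pi_P^{-1}(F)\cong\R\mathcal{Z}_F$ in $\R\mathcal{Z}_P$ via the commutative square of fundamental groups coming from the covering $\R\mathcal{Z}_P\to M$. One minor remark: in your final sentence, once you know $q_*\circ i_*$ is injective, the injectivity of $i_*$ follows immediately (injectivity of $q_*$ is not needed for that implication).
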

   \begin{proof}
      Suppose $P$ has $m$ facets. By Definition~\ref{Defi:Real-MAM},
       there is a 
      locally standard $(\Z_2)^m$-action on $\R \mathcal{Z}_P$
      with orbit space $P$. 
     By the Four Color Theorem, there exists a small cover $M$ over $P$ and then $\R \mathcal{Z}_P$ is a 
     regular $(\Z_2)^{m-3}$-covering space of $M$.  
     Let $\eta: \R \mathcal{Z}_P \rightarrow M$ be the covering map. 
     By Proposition~\ref{Prop:Facial-3-dim}, there exists a facet $F$ of $P$ so that the facial 
     submanifold $M_F$ is $\pi_1$-injective in $M$. Obviously
      $\eta^{-1}(M_F) = \pi^{-1}_P(F)$ is a disjoint union of copies of $\R\mathcal{Z}_F$, where
      $\pi_P: \R \mathcal{Z}_P \rightarrow P$ is the canonical projection.
    Let $j_F : M_F \rightarrow M$ be the inclusion and choose a
     copy of $\R\mathcal{Z}_F$ in $\eta^{-1}(M_F)$ and let $\widetilde{j}_F: \R\mathcal{Z}_F \rightarrow \R\mathcal{Z}_P $ be the inclusion.
       So $\eta$, $j_F$ and $\widetilde{j}_F$ induce a
      commutative diagram on the fundamental groups below. 
      \begin{equation*}
        \xymatrix{
       \pi_1(\R\mathcal{Z}_F) \ar[r]^{(\widetilde{j}_F)_*} \ar[d]^{\eta_*} & \pi_1(\R\mathcal{Z}_P) 
      \ar[d]^{\eta_*}  \\
       \pi_1(M_F)  \ar[r]^{\ (j_F)_*} & \pi_1(M)   
      } 
      \end{equation*}
      Then since $(j_F)_*$ and $\eta_*$ are all injective, so is $(\widetilde{j}_F)_*$. 
   \end{proof}
   
    By Schoen-Yau~\cite[Theorem 5.2]{SchYau79}, the existence of $\pi_1$-injective closed surfaces with nonpositive Euler characteristic
   in a compact orientable $3$-manifold $M$ is an obstruction to the existence of 
  Riemannian metric with positive scalar curvature on $M$.   
  Moreover, if $M$ admits a Riemannian metric with nonnegative scalar curvature and has an immersed $\pi_1$-injective orientable closed surface $S$ with positive genus, then $S$ must be
a torus and $M$ is Riemannian flat. 
 In addition, after works of Schoen, Yau, Gromov and Lawson, Perelman's proof of Thurston's the Geometrization Conjecture led to a complete classification of closed orientable $3$-manifolds which 
admit Riemannian metrics with positive scalar
curvature (see~\cite{Marq12}). These manifolds are the connected sum of spherical 3-manifolds and copies of $S^2 \times S^1$.
Combing these results with
Proposition~\ref{Prop:Facial-3-dim}, we can describe all the $3$-dimensional small covers and 
real moment-angle manifolds which can hold Riemannian metrics with positive or nonnegative scalar curvature below.\n

Let $P$ be an $n$-dimensional simple convex polytope in $\R^n$ and $v$ a vertex of $P$.  Choose a plane
$H$ in $\R^n$ such that $H$ separates $v$ from the other vertices of $P$. Let $H_{\geq}$ and $H_{\leq}$ be the two
half spaces determined by $H$ and assume that $v$ belongs to $H_{\geq}$. Then $P\cap H_{\geq}$ is an $(n-1)$-simplex, and
$P\cap H_{\leq}$ is a simple polytope, which we refer to as a \emph{vertex cut} of $P$. 
For example, a vertex cut of the $3$-simplex $\Delta^3$ is combinatorially equivalent to
 $\Delta^2\times [0,1]$ (the triangular prism).
 We use the notation $\mathrm{vc}^1(P)$ for any simple polytope that is obtained from $P$ by a vertex cut when the choice of the vertex is irrelevant. Note that up to combinatorial equivalence we can recover $P$ by shrinking the $(n-1)$-simplex $P\cap H_{\geq}$ on
$\mathrm{vc}^1(P)$ to a point.
 We also use the notation $\mathrm{vc}^k(P)$ for any simple polytope that is obtained from $P$ by iterating the vertex cut operation $k$ times for any $k\geq 0$ where $\mathrm{vc}^0(P)=P$. Note that for any simple polytop $P$ and any $k\geq 1$, $\mathrm{vc}^k(P)$
 is not a flag polytope since it always has a simplicial facet.
 
 \begin{rem}
   The simplicial polytope dual to any $\mathrm{vc}^k(\Delta^3)$
   is known as a \emph{stacked $3$-polytope}. A stacked $n$-polytope is a polytope obtained from 
   an $n$-simplex by repeatedly gluing another $n$-simplex onto one of its facets (see~\cite{MillReinStur07}).
 One reason for the significance of stacked polytopes is that, among all simplicial $n$-polytopes with a given number of vertices, the stacked polytopes have the fewest possible higher-dimensional faces. 
 \end{rem}

\begin{lem} \label{Lem:Vertex-Cut}
 Let $P$ be a $3$-dimensional simple polytope with $m$ facets. Then the real moment-angle manifold 
$\R\mathcal{Z}_{\mathrm{vc}^1(P)}$ is diffeomorphic to the connected sum of two copies of $\R\mathcal{Z}_P$ with
$2^{m-3}-1$ copies of $S^2\times S^1$. 
\end{lem}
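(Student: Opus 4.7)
Our plan is to realize $\R\mathcal{Z}_{\mathrm{vc}^1(P)}$ as a cut-and-paste of two copies of $\R\mathcal{Z}_P$ along the preimage of the new triangular facet. Write $P' := \mathrm{vc}^1(P)$, let $v$ be the vertex of $P$ that is cut, and let $F_1,F_2,F_3$ be the three facets of $P$ meeting at $v$; in $P'$ these become modified facets $F_1',F_2',F_3'$, and $P'$ gains a new triangular facet $F_{m+1}$. Extend the characteristic function by setting $\mu(F_i')=e_i$ for $1\le i\le m$ and $\mu(F_{m+1})=e_{m+1}$, where $\{e_1,\ldots,e_{m+1}\}$ is a basis of $(\Z_2)^{m+1}$; crucially, $\mu(F_{m+1})$ is independent of the other characteristic values.

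By Definition~\ref{Defi:Real-MAM}, the preimage $\pi_{P'}^{-1}(F_{m+1})$ is a disjoint union of $2^{m-3}$ copies of $\R\mathcal{Z}_{F_{m+1}}=\R\mathcal{Z}_{\Delta^2}\cong S^2$. We would choose a small collar $V\cong F_{m+1}\times[0,\epsilon)$ of $F_{m+1}$ in $P'$ and set $N:=\pi_{P'}^{-1}(V)$. Since $e_{m+1}$ lies outside $\langle e_1,\ldots,e_m\rangle$, the two copies of $V$ indexed by $g$ and $g+e_{m+1}$ glue only along $F_{m+1}$, so $N$ is diffeomorphic to a disjoint union of $2^{m-3}$ open tubes $S^2\times(-\epsilon,\epsilon)$. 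The complement $\R\mathcal{Z}_{P'}\setminus N$ splits into two connected components, distinguished by the $e_{m+1}$-coordinate of $g$, which is well defined away from $\pi_{P'}^{-1}(F_{m+1})$. Each component is obtained by gluing $2^m$ copies of $W:=P'\setminus V$ via $\mu$ restricted to $\{F_1',\ldots,F_m'\}$. Since $W$ is homeomorphic to $P$ with a small open triangular corner removed at $v$, and the remaining characteristic values agree with those of $\R\mathcal{Z}_P$, each component is homeomorphic to $\R\mathcal{Z}_P$ with $2^{m-3}$ open $3$-balls removed, one around each of the $2^{m-3}$ preimages of $v$.

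Thus $\R\mathcal{Z}_{P'}$ is built from two disjoint copies of $\R\mathcal{Z}_P$, each punctured by $2^{m-3}$ open $3$-balls, glued by $2^{m-3}$ tubes $S^2\times I$. To conclude, we would invoke the standard $3$-manifold fact: given closed connected orientable $3$-manifolds $M_1,M_2$, each with $k$ disjoint open balls removed, gluing by $k$ matching tubes yields $M_1\#M_2\#(k-1)(S^2\times S^1)$, since the first tube realizes the connected sum $M_1\#M_2$ and each subsequent tube is a $1$-handle attachment, equivalent to connected sum with $S^2\times S^1$. Applying this with $k=2^{m-3}$ gives the lemma. The main technical hurdle will be the identification in the previous step: verifying that each half of $\R\mathcal{Z}_{P'}\setminus N$ is indeed $\R\mathcal{Z}_P$ with standard $3$-balls removed. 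This requires a careful analysis of the local structure at $v$, using that $v$ has stabilizer $\langle e_1,e_2,e_3\rangle\cong(\Z_2)^3$ and that the eight copies of a small corner of $P$ at $v$ glue via the standard $(\Z_2)^3$-action to form a standard $3$-ball around each of the $2^{m-3}$ preimages.
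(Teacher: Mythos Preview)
Your proof is correct and follows essentially the same approach as the paper: both realize $\R\mathcal{Z}_{\mathrm{vc}^1(P)}$ as two copies of $\R\mathcal{Z}_P$ (the paper writes this as $\R\mathcal{Z}_P\times S^0$), each with $2^{m-3}$ balls removed, connected by $2^{m-3}$ tubes $S^2\times I$, and then invoke the standard handle-attachment count. The paper's proof is a two-line sketch that defers the details to~\cite[\S 6.4]{BP02} and~\cite[\S 2]{GitMed13}, whereas you have spelled out the geometric identification explicitly; in particular your verification that the complement of the tubes splits by the $e_{m+1}$-coordinate into two punctured copies of $\R\mathcal{Z}_P$ is exactly the content the paper leaves to those references.
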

\begin{proof}
  We can identify the $0$-sphere
  $S^0$ with $\Z_2$. Then by the definition of $\R\mathcal{Z}_{\mathrm{vc}^1(P)}$,
   $\R\mathcal{Z}_{\mathrm{vc}^1(P)}$ is produced from $\R\mathcal{Z}_P\times S^0$ by an equivariant surgery cutting $2^{m-3}$ balls from each copy 
   of $\R\mathcal{Z}_P$ and then connecting the boundary components
   by $2^{m-3}$ tubes $S^2\times S^1$ (see~\cite[\S 6.4]{BP02} 
   or~\cite[\S 2]{GitMed13}). 
\end{proof}

So indeed $\R\mathcal{Z}_{\mathrm{vc}^1(P)}$ is independent of where the vertex cut on $P$ is made. 
In general, $\R\mathcal{Z}_{\mathrm{vc}^k(P)}$ is a connected sum
of $2^k$ copies of $\R\mathcal{Z}_P$ with many copies of $S^2\times S^1$
for any $k\geq 0$. \n

\begin{defi}[Invariant Metric] \label{Defi:Inv-Metric}
 For a simple $3$-polytope $P$ with $m$ facets, a Riemannian metric $g$ on $\R \mathcal{Z}_P$ (or a small cover $M$ over $P$) is called an \emph{invariant metric} on $\R \mathcal{Z}_P$ (or $M$)
 if the canonical $(\Z_2)^m$-action (or canonical $(\Z_2)^3$-action)
  on $\R \mathcal{Z}_P$ (or $M$) is isometric with respect to $g$.
 Any invariant metric $g$ on $\R\mathcal{Z}_P$ can be projected to an invariant metric on $M$ which is locally isomorphic to $g$.  
  \end{defi}

  The following theorem tells us that 
 any equivariant surgery of codimension $\geq 3$ 
  can preserve the existence of
  invariant metric of positive scalar curvature on a manifold with respect to a compact Lie group action.

  \begin{thm}[cf. \cite{Ber83} Theorem 11.1] \label{Thm:Surgery}
 Let $M$ and $N$ be $G$-manifolds where $G$ is a compact Lie group.
Assume that $N$ admits an $G$-invariant metric of positive scalar curvature. If $M$
is obtained from $N$ by equivariant surgeries of codimension at least three, then $M$
admits a $G$-invariant metric of positive scalar curvature.
\end{thm}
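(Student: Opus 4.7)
The plan is to adapt the classical Gromov--Lawson/Schoen--Yau surgery construction to the equivariant category by carrying out every step $G$-equivariantly. By induction on the number of surgeries, it suffices to treat a single equivariant surgery: $M$ is obtained from $N$ by excising an equivariant tubular neighborhood of a $G$-invariant embedded sphere $\Sigma \cong S^p \subset N$ and gluing in $D^{p+1} \times S^q$ along the boundary, where the codimension $q+1 \geq 3$. I would first invoke the equivariant tubular neighborhood theorem for compact Lie group actions (Bredon) to produce a $G$-invariant tubular neighborhood $U$ of $\Sigma$ in $N$ on which the given invariant metric $g_N$ is defined. Since $G$ acts isometrically and preserves $\Sigma$, the normal exponential map is $G$-equivariant, the fibrewise normal sphere bundle is $G$-equivariant, and the radial distance $r$ to $\Sigma$ is a $G$-invariant function on $U$.

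Next, I would apply the Gromov--Lawson radial deformation inside $U$: the metric is modified only through a radial profile $\varphi(r)$ along the normal directions to produce a torpedo-type metric of the form $g_{\Sigma} + dr^2 + \varphi(r)^2 g_{S^q}$ near the neck, interpolating smoothly with $g_N$ toward $\partial U$. Because the deformation is a function only of $r$ and of the standard round metric on the normal fibre $S^q$, both of which are $G$-invariant, the deformed metric on $U$ is again $G$-invariant. The codimension hypothesis $q+1\geq 3$ is exactly what keeps the scalar curvature positive under this deformation, since the positive scalar curvature contribution of the round sphere factor $S^q$ of sufficiently small radius $\varphi$ dominates any negative contribution arising from the interpolation in the radial direction.

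Finally, one glues in $D^{p+1} \times S^q$ equipped with a $G$-invariant product-type metric whose boundary is isometric to the neck of the deformed torpedo metric; the $G$-action extends across the gluing because the identification of the neck is $G$-equivariant by the construction in the previous step. The result is a $G$-invariant Riemannian metric of positive scalar curvature on $M$. The main obstacle that must be checked is precisely that every ingredient of the Gromov--Lawson construction (choice of normal coordinates, the radial profile $\varphi(r)$, the interpolation cutoff near $\partial U$, and the scalar curvature computation) depends only on $G$-invariant data, namely the function $r$ and the fibrewise round metric; once this invariance is verified, the classical curvature estimates apply verbatim and deliver the desired metric.
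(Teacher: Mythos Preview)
The paper does not supply its own proof of this theorem: it is quoted as an external result, attributed to B\'erard~Bergery~\cite{Ber83}, and is used as a black box in the proof of Proposition~\ref{Prop:ScalCur-RZ_P}. So there is no in-paper proof to compare your proposal against.

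That said, your outline is essentially the correct strategy and is the one B\'erard~Bergery follows: the equivariant version of the Gromov--Lawson surgery theorem is obtained by observing that every geometric ingredient of the Gromov--Lawson bending construction (the tubular neighborhood, the normal exponential map, the radial distance function, and the deformation profile) is built from $G$-invariant data when one starts from a $G$-invariant metric and a $G$-invariant surgery sphere, so the resulting positive-scalar-curvature metric is automatically $G$-invariant. One technical point you gloss over is that in the genuinely equivariant setting the ``sphere'' $\Sigma$ along which one does surgery need not be a single orbit with a trivial $G$-action, and the normal bundle need not be $G$-equivariantly trivial in the naive sense; one should phrase the surgery in terms of a $G$-invariant submanifold with an equivariant normal bundle and check that the torpedo metric on the handle can be chosen compatibly with the $G$-action on both sides of the gluing. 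This is handled in B\'erard~Bergery's paper, and for the application in the present paper (where $G=(\Z_2)^m$ is finite and the surgeries are the explicit codimension-$3$ equivariant $0$-surgeries arising from vertex cuts) the issue is harmless. Your sketch would be acceptable as an indication of why the cited theorem holds.
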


\begin{prop} \label{Prop:ScalCur-RZ_P}
  Let $P$ be a $3$-dimensional simple polytope. The real moment-angle manifold
   $\R \mathcal{Z}_P$ admits a Riemannian metric with nonnegative scalar curvature if and only if
  $P$ is combinatorially equivalent to the cube $[0,1]^3$ or a polytope obtained from $\Delta^3$ by a sequence of vertex cuts. In this case $\R \mathcal{Z}_P$ is either the $3$-dimensional torus, the $3$-sphere or a connected sum of $(k-1)2^k+1$ copies of $S^2\times S^1$ for some $k\geq 1$, and the nonnegative scalar curvature metric on these 
  $\R \mathcal{Z}_P$ can be assume be to invariant.
\end{prop}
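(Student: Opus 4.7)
The plan is to prove the two implications separately. For the forward direction I will exhibit an invariant Riemannian metric of positive or zero scalar curvature in each of the three listed cases, combining standard constructions with the equivariant surgery result Theorem~\ref{Thm:Surgery}. For the converse I will invoke the well-known dichotomy (Gromov-Lawson combined with Schoen-Yau and Perelman) that a closed orientable $3$-manifold carrying a nonnegative scalar curvature metric is either Riemannian flat or admits a metric of positive scalar curvature, and treat the two cases separately using the combinatorial tools developed earlier in the paper.

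For the forward direction, when $P = \Delta^3$ I identify $\R\mathcal{Z}_P$ with $S^3$ and take the round metric, invariant under the canonical $(\Z_2)^4$-action. When $P = [0,1]^3$ a direct commutator computation, using that opposite facets of the cube give the only non-commuting pairs in $W_P$, yields $[W_{[0,1]^3}, W_{[0,1]^3}] \cong \Z^3$, so $\R\mathcal{Z}_{[0,1]^3}$ is the $3$-torus with the standard flat metric, invariant under the canonical $(\Z_2)^6$-action (realized as coordinate reflections and half-shifts). When $P = \mathrm{vc}^k(\Delta^3)$ with $k \geq 1$, I iterate Lemma~\ref{Lem:Vertex-Cut}: each vertex cut realizes $\R\mathcal{Z}_{\mathrm{vc}^1(Q)}$ from $\R\mathcal{Z}_Q \sqcup \R\mathcal{Z}_Q$ by $2^{m-3}$ equivariant $0$-surgeries, which in dimension three are of codimension three, so Theorem~\ref{Thm:Surgery} preserves an invariant positive-scalar-curvature metric through the iteration starting from the round metric on $S^3 = \R\mathcal{Z}_{\Delta^3}$. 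Solving the recursion $a_{k+1} = 2 a_k + 2^{k+1} - 1$ with $a_0 = 0$ gives the count $a_k = (k-1) 2^k + 1$ of $S^2 \times S^1$-summands.

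For the converse, I split by the dichotomy. If $\R\mathcal{Z}_P$ is Riemannian flat, then $[W_P, W_P] = \pi_1(\R\mathcal{Z}_P)$ is virtually $\Z^3$ by Bieberbach, and since $[W_P, W_P]$ has finite index $2^m$ in $W_P$, the right-angled Coxeter group $W_P$ is itself virtually $\Z^3$. The standard characterization of virtually abelian right-angled Coxeter groups forces the complement $\bar G$ of the $1$-skeleton of $\partial P^*$ to be a disjoint union of isolated edges (and possibly isolated vertices) producing exactly three $D_\infty$-factors; combined with the Euler identity $E = 3m - 6$ for a simplicial $2$-sphere, this forces $m = 6$ and identifies $\partial P^*$ with the octahedron, whence $P \cong [0,1]^3$ and $\R\mathcal{Z}_P = T^3$. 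If instead $\R\mathcal{Z}_P$ admits positive scalar curvature, then since $\pi_1(\R\mathcal{Z}_P)$ is torsion-free (Section~\ref{subsec:Right-Coxeter}), Perelman's classification shows $\R\mathcal{Z}_P$ is a connected sum of copies of $S^2 \times S^1$. By Corollary~\ref{Cor:RMA-Dim3}, some facet $F$ of $P$ yields a $\pi_1$-injective embedded closed orientable surface $\R\mathcal{Z}_F \subset \R\mathcal{Z}_P$; a direct cell count gives $\chi(\R\mathcal{Z}_F) = 2^{\ell-2}(4 - \ell)$ when $F$ is an $\ell$-gon, so the Schoen-Yau obstruction~\cite{SchYau79} to $\pi_1$-injective surfaces of nonpositive Euler characteristic forces $F$ to be a triangle. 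Contracting such a triangular facet $F_0$ to a single vertex then yields a simple $3$-polytope $P'$ with $P \cong \mathrm{vc}^1(P')$, and Lemma~\ref{Lem:Vertex-Cut} presents $\R\mathcal{Z}_{P'}$ as a prime connected-sum factor of $\R\mathcal{Z}_P$, so Kneser-Milnor uniqueness forces $\R\mathcal{Z}_{P'}$ to also be a connected sum of copies of $S^2 \times S^1$. Hence $\R\mathcal{Z}_{P'}$ admits positive scalar curvature, and induction on the number of facets gives $P' \cong \mathrm{vc}^{k-1}(\Delta^3)$ and therefore $P \cong \mathrm{vc}^k(\Delta^3)$ for some $k \geq 1$.

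The main obstacle I anticipate is the combinatorial step in the converse's positive-curvature case: justifying that contracting any triangular facet of a simple $3$-polytope yields a valid simple $3$-polytope inverting a vertex cut, particularly when the three neighboring facets form a $3$-belt rather than meeting at a common vertex of $P$; this requires a careful local analysis of the incidence structure at the contracted point. A secondary difficulty is the combinatorial deduction in the flat case, where the passage from ``$W_P$ virtually $\Z^3$'' to $P \cong [0,1]^3$ relies on the characterization of virtually abelian right-angled Coxeter groups together with the Euler-identity count that rules out extraneous isolated vertices in $\bar G$.
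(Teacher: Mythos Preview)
Your forward direction matches the paper's. Your converse direction is valid but follows a different route, and leans on heavier machinery than the paper needs.

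The paper splits the converse according to whether $P$ has a triangular facet. If not, Corollary~\ref{Cor:RMA-Dim3} produces a $\pi_1$-injective $\R\mathcal{Z}_F$ of positive genus, so Schoen--Yau forces $\R\mathcal{Z}_P$ to be flat, and the paper simply cites~\cite[Theorem 1.2]{KurMasYu15} to conclude $P\cong[0,1]^3$. If $P$ does have a triangular facet, the paper contracts \emph{all} triangular facets in succession until reaching a polytope $Q$ with none; then either $Q=\Delta^3$ (so $P=\mathrm{vc}^k(\Delta^3)$), or the same Schoen--Yau argument applied to $\R\mathcal{Z}_Q$ (whose $\pi_1$-injective surface of positive genus remains $\pi_1$-injective in the connected sum $\R\mathcal{Z}_P$) forces $\R\mathcal{Z}_P$ flat, contradicting the presence of triangular facets. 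No Perelman, no Kneser--Milnor, no RACG classification is needed.

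By contrast, you split according to flat versus positive scalar curvature. Your flat branch reproves~\cite[Theorem 1.2]{KurMasYu15} by hand via the characterization of virtually abelian right-angled Coxeter groups and an Euler-relation edge count; this is correct and self-contained, but duplicates a result the paper is content to cite. Your positive branch invokes Perelman to identify $\R\mathcal{Z}_P$ as $\#(S^2\times S^1)$, then contracts one triangle at a time and uses Kneser--Milnor to propagate the identification to $\R\mathcal{Z}_{P'}$ so that the induction continues. This works, but note a small slip: $\R\mathcal{Z}_{P'}$ is a connected \emph{summand} of $\R\mathcal{Z}_P$, not a prime factor; uniqueness of prime decomposition then forces its primes to be $S^2\times S^1$'s, which is what you need. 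You should also state the base case $P=\Delta^3$ explicitly.

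Finally, the obstacle you flag is not real. If $P\neq\Delta^3$ and $F_0$ is a triangular facet with neighbours $F_1,F_2,F_3$, then each $F_i$ has at least four edges: were some $F_i$ a triangle, its two vertices on $F_0$ force its third edge to join $F_j$ and $F_k$ (the other two neighbours of $F_0$), and then the four facets $F_0,F_1,F_2,F_3$ already close up $\partial P$, giving $P=\Delta^3$. Hence contracting $F_0$ drops each neighbouring face to at least a triangle and yields a genuine simple polytope $P'$ with $P=\mathrm{vc}^1(P')$.
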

\begin{proof}
  Assume that $\R \mathcal{Z}_P$ admits a Riemannian metric with nonnegative scalar curvature.    
   \begin{itemize}
   \item[(i)] Suppose $P$ has no triangular facets. By Corollary~\ref{Cor:RMA-Dim3},
     there exists a facet $F\neq \Delta^2$ of $P$ so that $\R \mathcal{Z}_P$ 
     has an embedded $\pi_1$-injective surface homeomorphic to $\R\mathcal{Z}_F$.
    Then since $\R\mathcal{Z}_F$ is an orientable surface with positive genus,
     $\R \mathcal{Z}_P$ must be a flat manifold by~\cite[Theorem 5.2]{SchYau79}. 
     Then by~\cite[Theorem 1.2]{KurMasYu15},
    $P$ is combinatorially equivalent to the cube $[0,1]^3$.\n
     
    \item[(ii)]  Suppose $P$ has some triangular facets. Then unless $P$ is a $3$-simplex, we can
          shrink a triangular facet on $P$ to a point and obtain a new simple polytope with
           less facets than $P$ (see Figure~\ref{p:Triangle-Facets}). If $P$ is a $3$-simplex, the shrinking of its triangular facet to a point is invalid. Assume that
            after doing all possible (valid) shrinking of the
             triangular facets on $P$, we obtain a simple polytope $Q$ in the end. \n
          
          \begin{itemize}
           \item  If $Q$ is a $3$-simplex, then up to combinatorial equivalence $P$
            can be obtained from the $3$-simplex by a sequence of vertex cuts. \n
           
           \item If $Q$ is not a $3$-simplex, then $Q$ has no triangular facets.
            But we claim that this is impossible. 
                Indeed, 
                 $\R \mathcal{Z}_P$ is homeomorphic to the connected sum of 
                 copies of $\R \mathcal{Z}_Q $ with copies $S^2\times S^1$. 
                 But by Corollary~\ref{Cor:RMA-Dim3}, $\R \mathcal{Z}_Q $ has an embedded $\pi_1$-injective 
                 surface $\Sigma$ homeomorphic to $\R \mathcal{Z}_F$ for some facet $F$ of $Q$. 
                 Then since $F$ is not a triangle, $\Sigma\cong\R \mathcal{Z}_F$ is
                  a closed orientable surface with positive
                 genus. It is clear that the surface $\Sigma$ in each copy of 
                 $\R \mathcal{Z}_Q$ is also $\pi_1$-injective in $\R \mathcal{Z}_P$. 
                 Then by~\cite[Theorem 5.2]{SchYau79}, $\R \mathcal{Z}_P$ is a flat
                  manifold and hence $P$ is combinatorially equivalent to a $3$-cube
                 by case (i). This contradicts our assumption that $P$ has some 
                 triangular facets.                 
          \end{itemize} 
    \end{itemize}
    By the above discussion, $P$ is either combinatorially equivalent to $[0,1]^3$ or 
    a polytope obtained from $\Delta^3$ by a sequence of vertex cuts.\n
    
    Conversely, we know that $\R \mathcal{Z}_{[0,1]^3}$ is the $3$-dimensional 
   torus which admits an invariant flat Riemannian metric, $\R \mathcal{Z}_{\Delta^3}$ is the $3$-sphere
   which admits an invariant Riemannian metric with positive scalar (sectional) curvature. 
   By Lemma~\ref{Lem:Vertex-Cut}, 
    it is easy to see that
    $\R\mathcal{Z}_{\mathrm{vc}^k(\Delta^3)}$ is  diffeomorphic to the connected sum of
    $(k-1)2^k+1$ copies of $S^2\times S^1$.
    Moreover, by the proof of Lemma~\ref{Lem:Vertex-Cut}, 
    $\R\mathcal{Z}_{\mathrm{vc}^k(\Delta^3)}$
   is obtained from $\R \mathcal{Z}_{\Delta^3} \times (S^0)^k$ by a sequence
   of equivariant surgeries of codimension $3$. 
   So from Theorem~\ref{Thm:Surgery},
   we can conclude that $\R\mathcal{Z}_{\mathrm{vc}^k(\Delta^3)}$
    admits an invariant Riemannian metric with positive scalar curvature.
  The proposition is proved.
   \end{proof}

        \begin{figure}
         \includegraphics[width=0.8\textwidth]{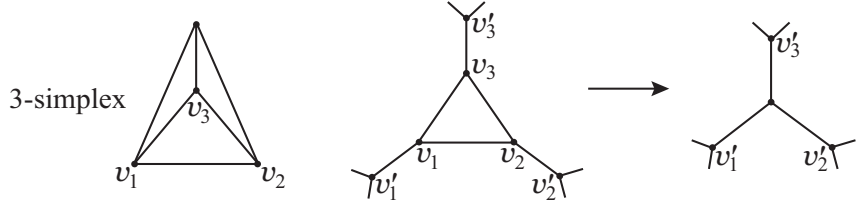}\\
          \caption{Shrinking a triangular facet to a point}\label{p:Triangle-Facets}
      \end{figure}

  It is shown in~\cite{KurMasYu15} that all $n$-dimensional
  small covers that admit flat Riemannian metrics are exactly the small covers over
  the $n$-cube $[0,1]^n$. These manifolds are called \emph{real Bott manifolds} in~\cite{MasKam09-1}. According to the discussion in~\cite[\S 7]{MasKam09-1}), there are exactly four diffeomorphism types among real Bott manifolds in dimension $3$,  
  two of which are orientable (the type $\mathcal{G}_1$ and $\mathcal{G}_2$ in the list of~\cite[Theorem 3.5.5]{Wolf77}). Moreover, Choi-Masuda-Oum~\cite{SuMasOum17} found
  an interesting combinatorial method to classify all real Bott manifolds up to affine diffeomorphism.

 \begin{prop} \label{Prop:ScalCur-SmallCover}
     A small cover $M$ over a simple $3$-polytope $P$ can hold a Riemannian metric with nonnegative
    scalar curvature if and only if $P$ is combinatorially equivalent to the cube $[0,1]^3$
   or a polytope obtained from $\Delta^3$ by a sequence of vertex cuts.
    In particular, all the orientable $3$-dimensional small covers that can hold Riemannian metrics with nonnegative scalar curvature are the two orientable real Bott manifolds in dimension $3$ and the
 connected sum of $k$ copies of $\R P^3$ for any $k\geq 1$.
 \end{prop}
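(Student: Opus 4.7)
The plan is to reduce the claim to Proposition~\ref{Prop:ScalCur-RZ_P} via the regular $(\Z_2)^{m-3}$-covering $\eta\colon\R\mathcal{Z}_P\to M$ supplied by Definition~\ref{Defi:Real-MAM}, and then to carry out the orientable-case classification by hand. For the ``only if'' direction, assume $M$ admits a Riemannian metric $g$ with nonnegative scalar curvature. Since $\eta$ is a local diffeomorphism, the pullback $\eta^*g$ is a Riemannian metric on $\R\mathcal{Z}_P$ whose scalar curvature at any point $\tilde{x}$ equals the scalar curvature of $g$ at $\eta(\tilde{x})$, hence is nonnegative. Proposition~\ref{Prop:ScalCur-RZ_P} then forces $P$ to be combinatorially equivalent to $[0,1]^3$ or to $\mathrm{vc}^k(\Delta^3)$ for some $k\geq 0$.

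For the ``if'' direction I would split into the two polytope cases. If $P\cong[0,1]^3$, then $M$ is a $3$-dimensional real Bott manifold, and by the main result of~\cite{KurMasYu15} every such $M$ carries a flat Riemannian metric, in particular one of (identically zero, hence nonnegative) scalar curvature. If $P\cong\mathrm{vc}^k(\Delta^3)$, then by Proposition~\ref{Prop:ScalCur-RZ_P} there is an invariant Riemannian metric $\tilde{g}$ on $\R\mathcal{Z}_P$ of positive scalar curvature. The small cover $M$ is obtained as the quotient of $\R\mathcal{Z}_P$ by a free action of a subgroup $H\cong(\Z_2)^{m-3}$ of the canonical $(\Z_2)^m$-action (Definition~\ref{Defi:Real-MAM}), and $H$ acts isometrically for $\tilde{g}$, so by the projection procedure in Definition~\ref{Defi:Inv-Metric} the metric $\tilde{g}$ descends to a Riemannian metric on $M$ locally isometric to $\tilde{g}$ and therefore of positive scalar curvature. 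This establishes the main equivalence.

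For the ``in particular'' statement, I would handle the two polytope types separately. Over $P\cong[0,1]^3$ the small covers are precisely the $3$-dimensional real Bott manifolds, and by~\cite{MasKam09-1} only two of the four diffeomorphism types in dimension three are orientable, namely those corresponding to the Bieberbach groups $\mathcal{G}_1,\mathcal{G}_2$ of~\cite[Theorem~3.5.5]{Wolf77}. Over $P\cong\mathrm{vc}^k(\Delta^3)$, I would use the classical fact that a vertex cut of a simple polytope corresponds to an equivariant connected sum of the small cover with a copy of $\R P^n$ performed at a $(\Z_2)^n$-fixed point (this matches the $\R\mathcal{Z}$-level surgery description of Lemma~\ref{Lem:Vertex-Cut}). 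Starting from the unique small cover over $\Delta^3$, namely $\R P^3$, and iterating, any small cover over $\mathrm{vc}^k(\Delta^3)$ is an equivariant connected sum of $k+1$ copies of $\R P^3$ (possibly with orientation-reversing identifications producing non-orientable analogues).

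The main obstacle is showing that, among all characteristic functions on $\mathrm{vc}^k(\Delta^3)$, the orientable small covers are exactly the straight connected sums $\R P^3\#\cdots\#\R P^3$ and that all such connected sums are realized. A direct combinatorial route is to apply the orientability criterion of Nakayama--Nishimura in terms of $\lambda$ and track how vertex cuts modify the criterion. A softer alternative, and the one I would prefer, is to invoke Perelman's resolution of the Geometrization Conjecture: since the above construction produces a positive-scalar-curvature metric on the orientable $M$, it must be diffeomorphic to a connected sum of spherical space forms and copies of $S^2\times S^1$; comparing with the $(\Z_2)^3$-equivariant connected-sum description and with the fundamental group computed from the presentation in Proposition~\ref{Prop:Presentation} then pins the summands down to $\R P^3$'s, completing the classification.
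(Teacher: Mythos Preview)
Your argument for the main equivalence is correct and is exactly the paper's: lift a nonnegative-scalar-curvature metric to $\R\mathcal{Z}_P$ and invoke Proposition~\ref{Prop:ScalCur-RZ_P} for one direction, and push down an invariant metric from $\R\mathcal{Z}_P$ to $M$ for the other.

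For the ``in particular'' clause, the paper takes precisely your route~(a), and does so in two lines: Nakayama--Nishimura forces the image of $\lambda$ to lie in $\{e_1,e_2,e_3,e_1+e_2+e_3\}$, which makes the coloring around any triangular facet unique up to change of basis; then~\cite[\S 5]{LuYu11} identifies the corresponding vertex-cut surgery with connected sum with $\R P^3$. Iterating from $\R P^3$ over $\Delta^3$ gives $\#_{k+1}\R P^3$ over $\mathrm{vc}^k(\Delta^3)$. Your preferred route~(b) through Geometrization is substantially heavier and, as written, incomplete: after knowing $M$ is a connected sum of spherical space forms and copies of $S^2\times S^1$, you still have to exclude both non-$\R P^3$ spherical summands and $S^2\times S^1$ summands. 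The torsion statement in \S\ref{subsec:Right-Coxeter} (every torsion element of $\pi_1(M)$ has order~$2$) would kill lens spaces and other spherical forms with larger fundamental group, but ruling out $S^2\times S^1$ summands needs a separate argument (e.g.\ a first Betti number computation), which you have not supplied. Also, your parenthetical ``possibly with orientation-reversing identifications producing non-orientable analogues'' is off: a connected sum of copies of $\R P^3$ is always orientable, so non-orientable small covers over $\mathrm{vc}^k(\Delta^3)$ (if any) are not accounted for by that description.
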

 \begin{proof}
 Clearly, if $M$ admits a Riemannian metric of nonnegative scalar curvature, so is $\R\mathcal{Z}_P$. Conversely, if
  $\R\mathcal{Z}_P$ admits a Riemannian metric $g$ of nonnegative scalar curvature, Proposition~\ref{Prop:ScalCur-RZ_P} tells us
 that we can choose $g$ to be an invariant metric on $\R\mathcal{Z}_P$.
 Then $g$ projects to a metric on $M$
 which also has nonnegative scalar curvature (see Definition~\ref{Defi:Inv-Metric}). The first statement is proved.\n
   
   If $M$ is orientable, we can assume that the range of
     the characteristic function $\lambda$ of $M$ is in the subset $\{ e_1,e_2,e_3, e_1+e_2+e_3 \}$ of $(\Z_2)^3$ where $\{ e_1,e_2,e_3 \}$ is a basis of $(\Z_2)^3$ (see~\cite[Theorem 1.7]{NaNish05}). Then up to a change of basis, the value of $\lambda$ around a triangular facet is
     equivalent to the right picture in Figure~\ref{p:Vertex-Cut}.
     The surgery on the $3$-dimensional small cover $M$ corresponding to the vertex cut 
     in Figure~\ref{p:Vertex-Cut} is the connected sum of $M$ with $\R P^3$ (see~\cite[\S 5]{LuYu11}). Then since the small cover over $\Delta^3$
     is $\R P^3$, the second statement is proved.
 \end{proof}

   A closed Riemannian flat $3$-manifold cannot admit any Riemannian metric with positive scalar curvature (otherwise the $3$-torus would admit
   a metric of positive scalar curvature which is impossible). So we have the following corollary.

 \begin{cor} \label{Cor:ScalCur-SmallCover-2}
 A small cover or the real moment-angle manifold over a simple polytope $P$ admits a Riemannian metric with positive scalar curvature if and only if
 $P$ is combinatorially equivalent to a polytope obtained from $\Delta^3$ by a sequence of vertex cuts. 
 \end{cor}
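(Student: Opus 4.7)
The plan is to deduce this corollary from Propositions~\ref{Prop:ScalCur-RZ_P} and \ref{Prop:ScalCur-SmallCover} together with the fact, recorded just before the statement, that no closed flat $3$-manifold carries a metric of positive scalar curvature. A metric with positive scalar curvature is in particular a metric with nonnegative scalar curvature, so by Propositions~\ref{Prop:ScalCur-RZ_P} and \ref{Prop:ScalCur-SmallCover} the polytope $P$ must be combinatorially equivalent to either the cube $[0,1]^3$ or to some $\mathrm{vc}^k(\Delta^3)$. Thus the only task is to rule out $P=[0,1]^3$ for positive scalar curvature and to upgrade the nonnegative scalar curvature metric constructed on $\R\mathcal{Z}_{\mathrm{vc}^k(\Delta^3)}$ to a positive scalar curvature metric on any small cover over such a polytope.

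For the forward direction, I would argue as follows. If $P\cong [0,1]^3$, then $\R\mathcal{Z}_P$ is the $3$-torus, and any small cover $M$ over $[0,1]^3$ is a real Bott manifold (as recalled before the corollary statement), hence a closed flat $3$-manifold. The observation recorded just before the corollary immediately excludes a positive scalar curvature metric on any closed flat $3$-manifold; thus $P=[0,1]^3$ is impossible and we must be in the vertex-cut case.

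For the converse, I would simply reread the proof of Proposition~\ref{Prop:ScalCur-RZ_P}: the invariant metric it constructs on $\R\mathcal{Z}_{\mathrm{vc}^k(\Delta^3)}$ actually has \emph{positive} (not merely nonnegative) scalar curvature, since it is produced from the round metric on $\R\mathcal{Z}_{\Delta^3}=S^3$ by iterated equivariant codimension-$3$ surgeries and Theorem~\ref{Thm:Surgery} preserves positivity of the scalar curvature. This handles the real moment-angle manifold case. For the small cover case, by Definition~\ref{Defi:Inv-Metric} any invariant metric on $\R\mathcal{Z}_P$ projects to a locally isometric metric on a small cover $M$ over $P$; local isometry preserves scalar curvature, so $M$ also inherits a metric of positive scalar curvature. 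Combining both directions yields the stated equivalence.

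The only step requiring any care is verifying that the metric built in Proposition~\ref{Prop:ScalCur-RZ_P} has strictly positive (rather than merely nonnegative) scalar curvature at every point; this is precisely what Theorem~\ref{Thm:Surgery} guarantees along the way, so nothing new needs to be proved. All other steps are direct citations of results established earlier in the paper.
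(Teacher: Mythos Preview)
Your proposal is correct and follows exactly the route the paper intends: combine Propositions~\ref{Prop:ScalCur-RZ_P} and~\ref{Prop:ScalCur-SmallCover} with the remark immediately preceding the corollary that a closed flat $3$-manifold admits no metric of positive scalar curvature, and observe that the invariant metric produced in Proposition~\ref{Prop:ScalCur-RZ_P} on $\R\mathcal{Z}_{\mathrm{vc}^k(\Delta^3)}$ already has strictly positive scalar curvature and hence so does its projection to any small cover. The paper itself gives no further argument beyond the single sentence before the corollary, so your write-up simply makes explicit what the paper leaves implicit.
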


 \begin{figure}
         \includegraphics[width=0.52\textwidth]{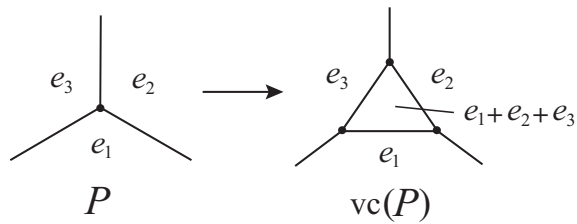}\\
          \caption{Vertex cut}\label{p:Vertex-Cut}
      \end{figure}

 We summarize
  results in this section along with some results from~\cite[\S 5]{KurMasYu15} in the Table 1 below, where we 
 list all the $3$-dimensional 
 simple polytopes and their dual simplicial polytopes over which
 the small covers (or real moment-angle manifolds) can hold 
 Riemannian metrics with various curvature conditions.
  
\begin{table}[!hbp] \renewcommand{\arraystretch}{1.5} \small 

\begin{tabular}{|c|c|c|} 

\hline

\hline

\small{$3$-dim. Small Cover} &  Simple $3$-Polytope & (Dual) Simplicial $3$-Polytope    \\

\hline

 Sectional/Ricci $>0$     &  $\Delta^3$  & $\Delta^3$ \\

\hline

 Scalar $>0$     &  $\mathrm{vc}^k(\Delta^3)$, $k\geq 0$    & Stacked $3$-polytopes  \\

\hline

 Sectional/Ricci $\geq 0$  & $[0,1]^3$, $\Delta^3$, $\Delta^2\times [0,1]$  & Octahedron, $\Delta^3$, Triangular bipyramid  \\

\hline

Scalar $ \geq 0$ &  $[0,1]^3$, $\mathrm{vc}^k(\Delta^3)$, $k\geq 0$ & Octahedron, Stacked $3$-polytopes \\
\hline    

\end{tabular}
\vskip .3cm
\caption{$3$-dimensional small covers which admit
  Riemannian metrics with various curvature conditions}
\end{table}

 In dimension $\geq 4$, characterizing all small covers and real moment-angle manifolds that admit Riemannian metrics of
 positive (or nonnegative) scalar curvature should be much harder than dimension $3$.
  Indeed, in dimension $4$ there are obstructions to the existence of metrics of 
       positive scalar curvature from Seiberg-Witten
       invariants.
 In dimension $\geq 5$, the existence of Riemannian metrics with positive 
    scalar curvature is intimately related to the existence of spin structures (see~\cite{GromLaw80}) and some   differential topological obstructions (e.g. $\widehat{A}$-genus) from index theory of the Dirac operator 
    (see~\cite{Hitch74, Lich63, Stolz92}). The reader is referred 
    to~\cite{Stolz02, Ros07} for a survey of this subject. But
    translating these conditions on small covers 
    into conditions on the underlying simple polytopes seems quite
    difficult. In addition, the minimal hypersurface method 
    (see~\cite{SchYau79-2, Schick98})
   should be useful for us to study this problem as well.

 \vskip .4cm
 
\section{Acknowledgment}
  The authors
   want to thank Taras Panov for some helpful discussion on the properties of
   right-angled Coxeter groups shown in section~\ref{subsec:Right-Coxeter} and,
   thank Li Cai for inspiring discussion on the proof of Proposition~\ref{Prop:Flag-Inject}.
   \vskip .6cm

\end{document}